\newcommand{\Rmnum}[1]{\expandafter\@slowromancap\romannumeral #1@}
\newtheorem{theorem}{Theorem}
\newtheorem{corollary}{Corollary}
\newtheorem{lemma}{Lemma}
\newtheorem{definition}{Definition}
\newtheorem{proposition}[theorem]{Proposition}
\newtheorem{remark}{Remark}
\def\BibTeX{{\rm B\kern-.05em{\sc i\kern-.025em b}\kern-.08em
    T\kern-.1667em\lower.7ex\hbox{E}\kern-.125emX}}
\begin{document}
\title{Automatic Implementation of Neural Networks through Reaction Networks---Part \Rmnum{2}: Error Analysis}
\author{Yuzhen Fan, Xiaoyu Zhang, Chuanhou Gao, \IEEEmembership{Senior Member, IEEE}, and Denis Dochain
\thanks{This work was funded by the National Nature Science Foundation of China under Grant No. 12320101001, 12071428, and 62303409, the China Postdoctoral Science Foundation under Grant No. 2023M733115. This work is an extension of our earlier paper in the 4th IFAC Workshop on Thermodynamics Foundations of
Mathematical Systems Theory, Jul. 25-27, 2022, Canada.} 
\thanks{Y. Fan and C. Gao are with the School of Mathematical Sciences, X. Zhang is with the College of Control Science and Engineering, Zhejiang University, Hangzhou, China (e-mail: yuzhen$\_$f@zju.edu.cn, gaochou@zju.edu.cn (correspondence), Xiaoyu\_Z@zju.edu.cn). }
\thanks{D. Dochain is with 
ICTEAM, UCLouvain, B ˆatiment Euler, avenue Georges Lemaˆıtre 4-6, 1348 Louvain-la-Neuve, Belgium (e-mail: denis.dochain@uclouvain.be).}}

\maketitle

\begin{abstract}
This paired article aims to develop an automated and programmable biochemical fully connected neural network (BFCNN) with solid theoretical support. In Part \Rmnum{1}, a concrete design for BFCNN is presented, along with the validation of the effectiveness and exponential convergence of computational modules. In this article, we establish the framework for specifying the realization errors by monitoring the errors generated from approaching equilibrium points in individual modules, as well as their vertical propagation from upstream modules and horizontal accumulation from previous iterations. Ultimately, we derive the general error upper bound formula for any iteration and illustrate its exponential convergence order with respect to the phase length of the utilized chemical oscillator. The numerical experiments, based on the classification examples, reveal the tendency of total errors related to both the phase length and iteration number.
\end{abstract}

\begin{IEEEkeywords}
biochemical fully connected network, exponential convergence, error analysis, equilibrium approximation, error propagation
\end{IEEEkeywords}

\section{Introduction}
\label{sec:introduction}

\IEEEPARstart{C}{ells} in nature exhibit remarkable adaptability to external stimuli through sophisticated biomolecular interactions, including gene regulatory networks and signal pathway networks. In synthetic biology, engineering principles are applied to design biological components and construct intricate circuits, aiming to emulate life behaviors and program cells for diverse applications such as medical care, energy, and the environment \cite{qian2018programming}. Notably, recent progress in this field emphasizes a "system level" design, facilitating the creation of sufficiently elaborate biochemical reaction circuits to perform a wider range of tasks, such as robustness enhancement \cite{qian2021robustness, xiao2018robust,aoki2019universal,frei2022genetic,filo2022hierarchy}, information processing \cite{qian2011neural,cherry2018scaling,
zhang2020cancer} and more. 

Molecular computational modules are crucial components for performing information processing objectives within engineered biological systems. To design biochemical circuits serving various computational purposes, chemical reaction networks (CRNs), a widely recognized mathematical model describing biomolecular interactions, are consistently regarded as the bridge. This is attributed to the potential DNA strand displacement reactions implementation of the abstract CRN model \cite{soloveichik2010dna}, and the capacity of CRNs equipped with mass-action kinetics (MAK), represented as polynomial ordinary differential equations (PODEs), to compile any computation, as asserted by Turing completeness \cite{fages2017strong}. Significant strides have been recently achieved toward the goal of creating programmable biological systems to execute intricate information-processing tasks \cite{anderson2021reaction,arredondo2022supervised,blount2017feedforward,lakin2023design,samaniego2021signaling,vasic2020crn++,vasic2022programming,vasic2020deep,FAN20227,fan2023automatic}. These achievements have been realized through programming artificial neural networks (ANNs) using biochemical reaction network (BCRN) systems, which aim to leverage the established computational framework and training mechanism of ANNs, enabling biological circuits to efficiently solve complicated problems like classification and decision-making, etc., and exhibit intelligent learning ability. 

Generally speaking, the BCRN realization of computational modules in FCNN relies on approximating calculation results through the limiting steady states (LSSs) of the corresponding reaction modules. Theoretically, achieving asymptotically stable equilibrium points is possible over infinite time, while obtaining arbitrary computation results is feasible within finite time. The phase length of the chemical oscillator \cite{arredondo2022supervised,lakin2023design,fan2023automatic} which signifies the operation time of each module is exactly limited. On top of that, the cascade link of multiple biochemical computational modules and multiple iterations in the training process will result in the propagation of errors within each module caused by such equilibrium approximation. Consequently, there is a common issue with such BCRN realization of neural networks, as previously discussed in \cite{FAN20227} i.e. whether the error compromises the performance of biochemical neural networks, especially the original convergence of the training algorithm used in FCNN. However, a comprehensive characterization of the error is not found in the literature.
In Part \Rmnum{1} \cite{fan2023automatic}, we devise the biochemical fully-connected neural network (BFCNN) system endowed MAK that is the first complete realization of a fully-connected neural network (FCNN) comprising the feedforward module, learning module, and all connection modules. We also demonstrated that all computational modules permit exponentially stable equilibrium points. This guarantees a rapid convergence of the reaction dynamics from the initial values to the equilibrium points. Certainly, this approach effectively minimizes errors as much as possible, while it does not eliminate them entirely. 

In this paper, we perform an in-depth error analysis on the BFCNN. Our contribution lies in first showing the upper bound of the cumulative error up to any iteration number on weights updating and then demonstrating that the convergence order of errors concerning the finite phase length is exponential. This implies that if the training algorithm for neural networks permits the weight matrices, starting from certain initial values, to approach the solution after multiple iterations progressively, the actual concentrations of weight species in BFCNN will inherently possess the convergence capability of approaching the ideal solution with any given precision within a finite timeframe. 

The organization of this paper is as follows. Section \ref{sec:pre} covers preliminaries about CRN and FCNN. Section \ref{sec:Error in 1st} considers the error accumulated during the first iteration to illustrate impacts from individual modules and vertical propagation. Section \ref{sec:error_m iteration} investigates the horizontal propagation of errors by incorporating influence from previous iterations, thereby presenting the general error upper bound formula for any iteration. The trends of the upper bound concerning the phase length and iteration number are validated in Section \ref{sec:numeri}. Finally, Section \ref{sec:conclusion} concludes this article.

\noindent \textbf{Notations:} $\mathbb{R}^n, \mathbb{R}^n_{\geq0}, \mathbb{R}^n_{>0}, \mathbb{Z}^n_{\geq0}$ denote $n$-dimensional real space, nonnegative space, positive space and nonnegative integer space, respectively. $\mathscr{C} (\cdot;*)$ is the set of continuous differentiable functions from $\cdot$ to $*$. For any real value function $f(\cdot)$ and a matrix $A = (a_{ij}) \in \mathbb{R}^{n \times n}$, $f(A) = (f(a_{ij})) \in \mathbb{R}^{n \times n} $ denotes $f(\cdot)$ acts on any element of $A$. Besides, $a_{\cdot j}$ indicates the $j$th column, and $a_{i\cdot}$ represents the $i$th row in $A$. The norm $\Vert \cdot \Vert$ appearing in this paper is regarded as $\infty$-norm in Euclidean space. $\mathds{1}$ and $\mathds{O}$ represent a dimension-suited column vector and matrix with all elements being $1$ and $0$, respectively. 


\section{Preliminaries}
\label{sec:pre}
In this section, we formally introduce the basic concepts of CRNs \cite{feinberg1972complex,feinberg1979lectures,horn1972general} and present an overview of FCNN with a specific feedforward structure, along with the mini-batch gradient descent (MBGD) training algorithm. 
\subsection{Chemical Reaction Network}
Consider a CRN with $n$ species that interact with each other through $r$ reactions
\begin{definition}[CRN]
A CRN consists of three finite sets, i.e., \textit{species set:} $\mathcal{S}=\{X_{1}, \ldots,X_{n}\}$ denotes the subjects participated in reactions; \textit{complex set:} $\mathcal{C}=\bigcup_{j=1}^{r}\{v_{.j},v'_{.j}\}$ with $v_{.j},v'_{.j}\in \mathbb{Z}_{\geq 0}^n$ where each complex is the linear combination of species $\bigcup_{i=1}^{n}X_i$ and $v_{ij}$ is the stoichiometric coefficient of $X_i$; \textit{reaction set:} $\mathcal{R}=\bigcup_{j=1}^{r}\{v_{.j}\to v'_{.j}\}$ describe interactions between species, often referred to as a triple $(\mathcal{S},\mathcal{C},\mathcal{R})$.
\end{definition}

Based on the above definition, the $j$th reaction is written as
\begin{equation}
	\sum_{i=1}^{n} v_{ij} X_{i} \longrightarrow \sum_{i=1}^{n} v'_{ij} X_{i},
	\label{eq:1}
\end{equation} 
where $v_{.j}=(v_{1j},...,v_{nj})^\top, v'_{.j}=(v'_{1j},...,v'_{nj})^\top$ are called \textit{reactant complex} and \textit{product complex}, respectively. The dynamics of $(\mathcal{S},\mathcal{C},\mathcal{R})$ that captures the change of the concentration of each species, labeled by $x\in\mathbb{R}^n_{\geq 0}$, may be reached if a $r$-dimensional vector-valued function $\mathscr{K}\in \mathscr{C}(\mathbb{R}^n,\mathbb{R}^r)$ is defined to evaluate the reaction rates and the balance
law is further utilized, written as
\begin{equation}
    \frac{dx(t)}{dt} = \Gamma \mathscr{K}(x),~~x \in \mathbb{R}^n_{\geq0}.
    \label{eq:2}
\end{equation}
where $\Gamma\in\mathbb{Z}^{n\times r}$ is the \textit{stoichiometric matrix} with the $j$th column $\Gamma_{\cdot j}=v'_{.j}-v_{.j}$. Usually, \textit{mass action kinetics} is used to characterize the reaction rate, which induces the rate of the $j$th reaction as $\mathscr{K}_j(x)=  k_j x^{v.j} \triangleq k_j\prod_{i=1}^{n} x_{i}^{v_{ij}}$,
where $k_j>0$ represents the rate constant. The CRN system endowed with MAK offers PODEs, often termed as \textit{mass action system} (MAS) and denoted by the quad $(\mathcal{S},\mathcal{C},\mathcal{R},k)$. 

\subsection{Fully Connected Neural Network}
Assume a FCNN with an input layer, a hidden layer, and an output layer, and let it perform a task for the data set $\mathbb{D}=\{(x^i,d^i)\}_{i=1}^p$ with $p$ to represent the size of $\mathbb{D}$. The training process utilizes the common gradient descent-based error backpropagation algorithm (GDBP) with the mini-batch strategy (i.e., at each iteration only part of samples are fed to update weights), and the activation function takes the \textit{Sigmoid function}. 
For simplicity but without the loss of generality, we fix the structure of FCNN to be \textit{two-two-one} nodes in every related layer to exhibit this process.
\subsubsection{Feedforward Propagation}
Denote the sample matrix by $\chi\in\mathbb{R}^{3\times p}$ with the $l$th column to be $\chi_{.i}=(x^i_1,x^i_2,d^i)^\top$, the \textit{input matrix} by $\Xi\in\mathbb{R}^{3 \times \tilde{p}}_{\geq 0}$ with $\Xi_{3.}=\mathds{1}^\top$ to handle dumb nodes corresponding to bias in the input layer and $\tilde{p}$ ($\tilde{p}\leq p$) to represent the mini-batch size, and the \textit{weight matrices} connecting input-hidden layers and hidden-output layers by $\mathcal{W}_1\in\mathbb{R}^{2\times 3}$ and $\mathcal{W}_2\in\mathbb{R}^{1\times 3}$ with their last columns to represent the corresponding bias. Then the feedforward computation process at the $m$th iteration follows
\begin{gather}\label{eq:weightsum}
\begin{split}
\mathcal{N}^m&=\mathcal{W}_1^m\cdot \Xi^m,\Upsilon^m(\mathcal{W}^m_1) =f(\mathcal{N}^m),\tilde{\Upsilon}^m=
    \begin{pmatrix}
        \Upsilon^m\\
        \mathds{1}^\top
\end{pmatrix},\\
\tilde{\mathcal{N}}^m&=\mathcal{W}_2^m\cdot \tilde{\Upsilon}^m, y^m(\mathcal{W}_2^m) =f(\tilde{\mathcal{N}}^m),
 \end{split}
\end{gather}
where $\mathcal{N}\in \mathbb{R}^{2\times \tilde{p}}$, $\Upsilon\in\mathbb{R}^{2\times \tilde{p}}$ is the \textit{hidden matrix}, $\tilde{\Upsilon}\in\mathbb{R}^{3\times \tilde{p}}$ is constructed to add a row of $1$ to $\Upsilon$ to handle dumb nodes in the hidden layer, $\tilde{\mathcal{N}}\in \mathbb{R}^{1\times \tilde{p}}$, and $y\in\mathbb{R}^{1\times \tilde{p}}$ is the \textit{output matrix}. We further can compute the training error $\mathcal{E}^m(\mathcal{W}_1^m, \mathcal{W}_2^m)=\frac{1}{2}(\delta^\top-y^m)(\delta^\top-y^m)^\top,$
where $\delta=(d^1,...,d^{\tilde{p}})^\top$. This finishes the feedforward computation of the $m$th iteration with $\tilde{p}$ samples.
\subsubsection{Backward Propagation}
This process serves to update weights according to GDBP, which takes
\begin{gather}
\begin{split}
\mathcal{W}^{m+1}&=\mathcal{W}^{m} + \Delta\mathcal{W}^m
       \label{eq:GD}
\end{split}
\end{gather}
with $\mathcal{W}=
    \begin{pmatrix}
        \mathcal{W}_1\\
        \mathcal{W}_2
\end{pmatrix}$, $\eta \in \left(0,1\right]$ to be the learning rate and 
\begin{gather}\label{eq:weight_update}
\begin{split}
\Delta \mathcal{W}_{1_{ij}}^m &= -\eta \frac{\partial \mathcal{E}^m}{\partial  \mathcal{W}^m_{1_{ij}}} = -\eta \sum^{\tilde{p}}_{l=1} e^m_l \frac{\partial f}{\partial n^m_{3l}} \mathcal{W}^m_{2_{1i}} \frac{\partial f}{\partial n^m_{il}} \Xi_{jl}^m, \\
\Delta \mathcal{W}_{2_{1j}}^m &=-\eta \frac{\partial \mathcal{E}^m}{\partial  \mathcal{W}^m_{2_{1j}}} = -\eta \sum^{\tilde{p}}_{l=1} e^m_l \frac{\partial f}{\partial n^m_{3l}} \tilde{\Upsilon}^m_{jl},\\
&~~~~i=1,2;~j=1,2,3.
\end{split}
\end{gather}
Here, $e_l^m=(\delta^{\top} - y^m)_{1l},~(n^m_{1l}, 
n^m_{2l})^{\top} = \mathcal{N}^m_{\cdot l},~n^m_{3l} = \tilde{\mathcal{N}}^m_{1l}$. 
Then, the training is terminated if $\vert e^m_l \vert$ is less than the preset threshold for all $l$, otherwise, it continues with another group of $\tilde{p}$ samples from $\mathbb{D}$. 


\section{Composition of Errors}
\label{sec:com_of_error}
This section provides a comprehensive overview of the error composition throughout the entire process. The BFCNN encompasses five primary modules, i.e., the assignment module, feedforward module, judgment module, learning module, and clear-out module where some modules incorporate several sub-computational reaction modules. The chemical oscillator, consisting of oscillatory reactions with rate constant $k_o$, governs their sequential execution by assigning distinct oscillatory species as catalysts for different reaction modules, thereby reactions within each module occur exclusively during the non-zero phase of their respective catalysts. Suppose that every non-zero phase in the oscillator will last for an adjustable time $T$ (called phase length) in practice.
If $s \in \mathbb{Z}$ denotes the number of reaction modules in which any species $X$ participates during an individual iteration, its concentration $x(sT)$ refers to the corresponding endpoint of its dynamical concentration flow. Notably, we omit the time interval during which species $X$ is not involved in reactions. Hereafter, the definition of all concentration variables here follows the conventions established in \cite{fan2023automatic}. For clarity, we regard $x(sT)$ in BFCNN (real system) and $x$ in FCNN (reference model) as the real computation result and the corresponding standard computation result, respectively. Then, we illustrate the following two types of error composition for the total error.

\subsubsection{Current error $\bar{\epsilon}_i^m$}
Considering each sub-computational reaction module $\mathcal{M}_i$ occurs in certain phase with the output species $X_{o_i}$, its standard computation result $x_{o_i}$ is exactly the theoretical LSS $\bar{x}_{o_i} = \lim_{t \to \infty} x_{o_i}(t)$ if there are no other errors. Caused by the finite phase length $T$ for $\mathcal{M}_i$, we describe the error $\vert x_{o_i}(T) -\bar{x}_{o_i} \vert <\bar{\epsilon}_i^m(T)$ as \textit{current error} with $\bar{\epsilon}_i^m(T)$ being \textit{current error bound}.

\subsubsection{Accumulative error $\tilde{\epsilon}^m_i$} The equilibrium $\bar{x}_{o_i}$ and its convergence characterization depend on the initial values and parameters of the reaction system in $\mathcal{M}_i$, which could be the LSSs of upstream modules and previous iterations. Thus, this component arises from cumulative current errors propagated vertically from upstream modules to $\mathcal{M}_i$ and horizontally from previous iterations to the $m$th iteration, as shown in Fig.\ref{fig:4}. Both contribute to \textit{accumulative error} which is described as $\vert \bar{x}_{o_i} - x_{o_i} \vert <\tilde{\epsilon}^m_i$ with $\tilde{\epsilon}^m_i$ being \textit{accumulative error upper bound} or solely altering the current error. 

\begin{figure}[!t]
\centerline{\includegraphics[width=\columnwidth]{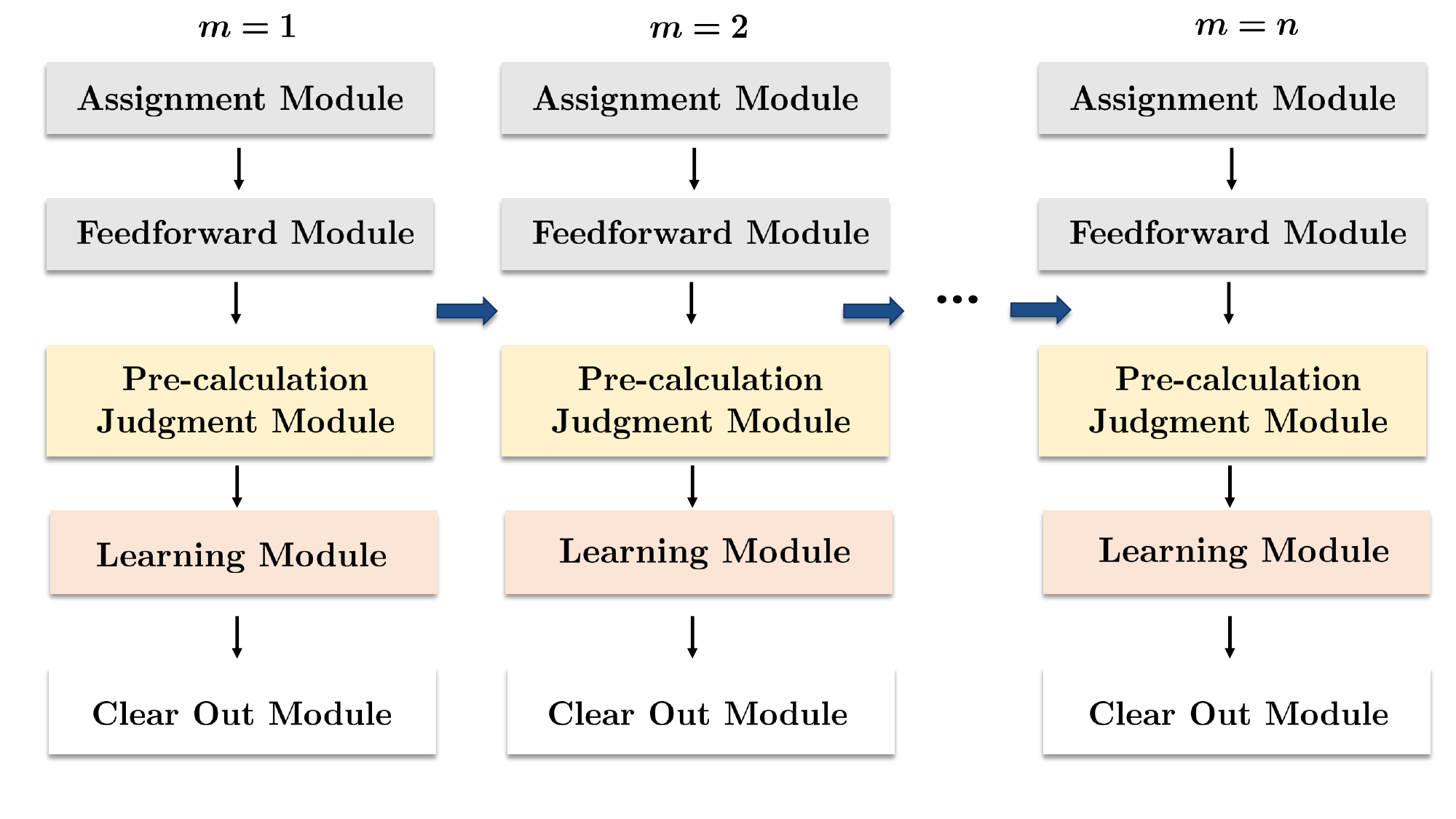}}
\caption{The propagation of the current error. The vertical thin black arrow denotes the vertical propagation, and the horizontal thick arrow describes the horizontal propagation.}
\label{fig:4}
\end{figure}



Up to the $m$th iteration, the upper bound for the total deviation of the output in $\mathcal{M}_i$ becomes $ \vert x_{o_i}(T) - x_{o_i} \vert < \epsilon_i^m = \bar{\epsilon}_i^m+\tilde{\epsilon}_i^m$. Consequently, errors accumulate in the weight matrices of each iteration, causing the actual iterative relationship of weights to differ from that in \eqref{eq:GD}. We note that, in BFCNN, the difference of equilibrium concentration matrices $\bar{w}^m_+ - \bar{w}^m_-$ of positive and negative weight/bias species computes the weight matrix $\mathcal{W}^m = \mathcal{W}_+^m-\mathcal{W}_-^m$ based on the dual-rail encoding. Considering the positive part, one can have $\bar{w}^{m+1}_{+} = w^m_+(T)-\eta \nabla \mathcal{E}_+^m(w^m_+(T), w^m_-(T))$ with $\nabla \mathcal{E}^m = \nabla \mathcal{E}^m_{+}-\mathcal{E}^m_{-}$ denoting the gradient function of $\mathcal{E}$.
Then, 
\begin{equation}
\begin{split}\label{eq:gen_w_error}
\Vert & w^{m+1}_+(T) - \mathcal{W}^{m+1}_+ \Vert  < \bar{\epsilon}^{m+1}_{w_+} + \Vert w^{m}_+(T) - \mathcal{W}^m_+  \Vert \\
& + \eta \Vert  \nabla \mathcal{E}^m_+(w^m_+(T), w^m_-(T))-  \nabla \mathcal{E}^m_+(\mathcal{W}^m_+, \mathcal{W}^m_-) \Vert \\
& < \bar{\epsilon}^{m+1}_{w_+} + \epsilon^m_{w_+} + \eta \delta^{m+1}_+,
\end{split}
\end{equation}
where $\mathcal{W}^m_{\pm} \in \mathbb{R}^{3 \times 3}_{\geq 0}$ indicate the standard positive and negative part of $\mathcal{W}^m$, and $\delta^{m+1}_+$ originates from computing the gradient in $m$th iteration ($\epsilon^m_{w_+}$ may contain errors of assigning $w^m_+(T)$ relying on design). The iteration formula $\tilde{\epsilon}^{m+1}_{w_+}= \tilde{\epsilon}^m_{w_+} +\bar{\epsilon}^m_{w_+} + \eta \delta^{m+1}_+ $ is derived from \eqref{eq:gen_w_error} and the rough expression can be written as 
\begin{equation*}
\tilde{\epsilon}^{m+1}_{w_+} = \sum_{i=0}^{m} (\Bar{\epsilon}^j_{w_+} + \eta \delta^{j+1}_+),
\end{equation*}
where $\bar{\epsilon}^0_{w_+}=0$ since initial weight concentrations are given in advance and not subject to calculation. In particular, $\bar{\epsilon}^j_{w_+} + \eta \delta^{j+1}_+$ can be interpreted as the error accumulated vertically from all upstream modules up to the uBCRN system at $j$th iteration. The summation from zero to $m$ shows the horizontal propagation. A similar argument can be applied in deducing $\bar{\epsilon}^{m+1}_{w_-}, \tilde{\epsilon}^{m+1}_{w_-}$ and then obtaining $\epsilon^{m+1}_{w}$ by $\epsilon^{m+1}_{w_+}+\epsilon^{m+1}_{w_-}$.


Roughly speaking, if the training process is confirmed convergent, namely for $\forall \epsilon$, $\exists K \in \mathbb{Z}_{>0}$ s.t. $\Vert \mathcal{W}^m-\mathcal{W}^* \Vert$ \textless $\epsilon$ for all $m \geq K$, and the total error $\epsilon^m_{w}$ caused by BCRN dynamics is also small enough and vanishes with increasing finite time $T$, the actual weight concentration matrix sequence can approach the accurate solution in MBGD with any given precision. In the following sections, we will proceed to estimate errors in all phases specific to the content in $\bar{\epsilon}^m$ and $\delta^m$ and show the error upper bound and the error convergence order. 



\section{Error Estimation in the First Iteration}
\label{sec:Error in 1st}
In this section, our attention is directed towards the error vertically propagated from upstream modules to the learning module, and we present the total deviation upper bound in weight updating for $m=1$. Suppose that no errors arise during the injection of any species $X$ with the corresponding concentrations into reactors, so $x^1(0)$ is accurate. Hereafter, $\bar{x}$ and $x$ appearing after the specific module indicate the variables of the current module unless otherwise specified as $\{x\}_{\mathcal{O}_i}$, and parameters in scalar estimation vary based on $l$ ($l$ denotes input size throughout this paper) but are denoted without explicitly specifying $l$ for convenience.


\subsection{Assignment Module Error}
The assignment module (aBCRN) consists of three sub-computational reaction modules $\mathcal{M}^a_1, \mathcal{M}^a_2, \mathcal{M}^a_3$ occupying three phases $\mathcal{O}_1, \mathcal{O}_3, \mathcal{O}_5$ of one period, and each sub-computational module is the MAS demonstrated to permit globally exponentially stable (GES) equilibrium points relative to its stoichiometry compatibility class in Part \Rmnum{1}. For $\forall i \in \{1,...,p\},~l\in \{1,...,\tilde{p}\}$, species of interest are, the sample species set $\{X^i_1, X^i_2, D^i\}_{i=1}^p$, input species set $\{S^l_1, S^l_2, S^l_3\}$, order species set $\{\tilde{C}^l_i\}$, and auxiliary order species set $\{\tilde{C}^l_i\}$ with their concentration vectors $\mathcal{X}_{.i}(t)=(x^i_1(t),x^i_2(t),d^i(t))^\top$, $s^l(t)=(s^l_{1}(t),s^l_{2}(t),s^l_3(t))^{\top}$, $c^l(t) = (c^l_1(t),\cdots,c^l_p(t))^{\top}$, $\tilde{c}^l(t) = (\tilde{c}^l_1(t),\cdots,\tilde{c}^l_p(t))^{\top}$, respectively. Fig.\ref{fig:5} shows that, when $m=1$, only current errors appear in $\mathcal{M}^a_1, \mathcal{M}^a_2$, and the accumulative error in $\mathcal{M}^a_3$ caused by the vertical propagation from $\mathcal{M}^a_2$ is inevitable since $c^l(T), \tilde{c}^l(T)$ become its initial condition. 
\begin{figure}[!t]
\centerline{\includegraphics[width=\columnwidth]{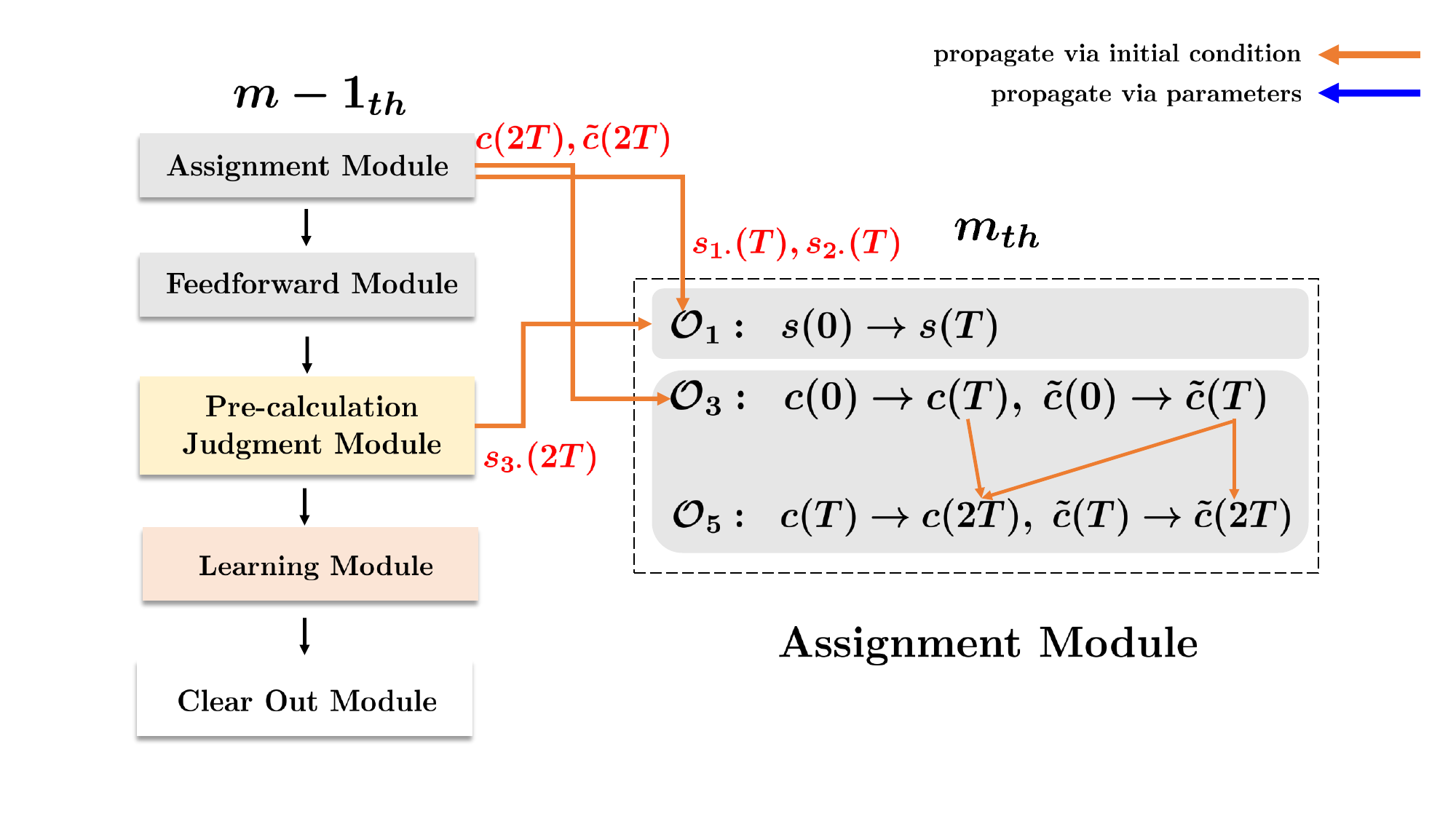}}
\caption{Schema of the propagation of error through initial concentrations from the $m-1$th to the $m$th iteration in the Assignment module.}
\label{fig:5}
\end{figure}
Thus, given $s(t) \in \mathbb{R}^{3 \times \tilde{p}}_{\geq 0}$, $c(t), \tilde{c}(t) \in \mathbb{R}^{p \times \tilde{p}}_{\geq 0}$ with $s^l(t), c^l(t), \tilde{c}^l(t)$ being their columns, respectively, and combining $\mathcal{X}(t) \in \mathbb{R}^{3 \times p}$, we have the following results for the case of $m=1$.
\begin{proposition}\label{prop:assign_1}
Let $s=(\chi_{.{1}},...,\chi_{.\tilde p})^{\top}$,  $\tilde{c}=\mathds{O}_{p \times \tilde{p}}$, $c = (e_{1+\tilde{p}},\cdots, e_{2\tilde{p}})^\top$ where $e_l \in \mathbb{R}^{1 \times p}$ is a unit row vector with the $l$th element to be 1 and $I_l\triangleq\{l,\tilde{p}+l,...,p-2\tilde{p}+l\}$, then there are positive coefficients $m_1, m_3, \tilde{m}_3$ such that the total error upper bound after the assignment module become
\begin{equation}
    \begin{split}
\Vert s(T) - s &\Vert \leq  m_1 e^{-T}\\
\Vert c(2T) -c\Vert \leq m_3 e^{-kT},&~ \Vert \tilde{c}(2T) -\tilde{c} \Vert  \leq \tilde{m}_3 e^{-kT},
    \end{split}
\label{eq:prop_assi}
\end{equation}
where $k>0$ is the rate constant in $\mathcal{M}^a_2, \mathcal{M}^a_3$.
\end{proposition}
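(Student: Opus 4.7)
The plan is to decompose the error analysis by sub-module, invoke the global exponential stability (GES) established in Part~\Rmnum{1} for each of $\mathcal{M}^a_1, \mathcal{M}^a_2, \mathcal{M}^a_3$, and chain the resulting estimates while carefully accounting for which modules inherit perturbed initial data during iteration $m=1$. The key observation, consistent with Fig.~\ref{fig:5}, is that only $\mathcal{M}^a_3$ starts from non-exact data in this first iteration, so its analysis requires an extra propagation step.

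First I would handle $\mathcal{M}^a_1$ (phase $\mathcal{O}_1$). Because the sample and input species are injected without error at $t=0$, the actual initial condition coincides with the reference one; since Part~\Rmnum{1} shows $\mathcal{M}^a_1$ is a MAS whose GES equilibrium equals the target $s$, there exist a constant $m_1>0$ and a rate normalized to $1$ (absorbing the rate constant of $\mathcal{M}^a_1$ into the prefactor) such that $\Vert s(T)-s\Vert \leq m_1 e^{-T}$. The analogous step for $\mathcal{M}^a_2$ (phase $\mathcal{O}_3$) is identical in spirit: the species $C,\tilde C$ are freshly reset, so GES with rate $k$ yields
\[
\Vert c(T)-c\Vert \leq m_2 e^{-kT}, \qquad \Vert \tilde c(T)-\tilde c\Vert \leq \tilde m_2 e^{-kT},
\]
for appropriate constants $m_2,\tilde m_2$ determined by Part~\Rmnum{1}'s GES prefactors and the exact initial data.

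The substantive step is $\mathcal{M}^a_3$ (phase $\mathcal{O}_5$), where the initial data $c(T),\tilde c(T)$ already carry the $O(e^{-kT})$ perturbation above. I would apply the triangle inequality,
\[
\Vert c(2T)-c\Vert \leq \Vert c(2T)-\bar c_3\Vert + \Vert \bar c_3 - c\Vert,
\]
where $\bar c_3$ denotes the equilibrium of $\mathcal{M}^a_3$ corresponding to the perturbed initial condition $(c(T),\tilde c(T))$. The first summand is bounded by the GES prefactor of $\mathcal{M}^a_3$ times $e^{-kT}$. For the second, I would argue that the map from initial data to equilibrium within the stoichiometric compatibility class of $\mathcal{M}^a_3$ is Lipschitz: the equilibrium is pinned down by conservation quantities depending affinely on initial concentrations, so $\Vert \bar c_3-c\Vert$ is linearly controlled by $\Vert c(T)-c\Vert+\Vert \tilde c(T)-\tilde c\Vert$, which is again $O(e^{-kT})$. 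Adding the two contributions gives $\Vert c(2T)-c\Vert \leq m_3 e^{-kT}$, with $m_3$ expressible explicitly in terms of $m_2,\tilde m_2$, the GES prefactor of $\mathcal{M}^a_3$, and the Lipschitz constant; an identical argument on the auxiliary order species yields $\tilde m_3$.

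The main obstacle is precisely this Lipschitz dependence of the $\mathcal{M}^a_3$ equilibrium on its initial data, with a constant independent of $T$. It requires the explicit description of the stoichiometric compatibility classes and equilibrium formulas established in Part~\Rmnum{1}, together with a verification that for $T$ large enough the perturbed equilibrium stays inside the positive orthant so that GES continues to apply. Once this is in hand the three bounds combine directly into \eqref{eq:prop_assi}.
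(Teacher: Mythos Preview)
Your overall architecture---GES bound for the current error in each sub-module, triangle inequality to split off the accumulative error in $\mathcal{M}^a_3$, and Lipschitz dependence of the equilibrium on initial data---is exactly the route the paper takes. The paper simply executes it explicitly: it writes down the linear ODEs, solves them in closed form, and exhibits the equilibrium map of $\mathcal{M}^a_3$ as the affine function $\bar c^l=\mathcal{K}^l\tilde c^l(T)+c^l(T)$, so that the Lipschitz constant is read off as $1+\Vert\mathcal{K}^l\Vert$. It then absorbs the resulting $e^{-2kT}$ cross-terms into $e^{-kT}$ by a leading-term argument to produce the constants $m_3,\tilde m_3$.

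There is, however, a genuine slip in your treatment of $\mathcal{M}^a_2$ that propagates into your $\mathcal{M}^a_3$ step. The equilibrium of $\mathcal{M}^a_2$ is \emph{not} the pair $(c,\tilde c)$ from the proposition statement; the dynamics $\dot c=-kc$, $\dot{\tilde c}=kc$ drive $(c(T),\tilde c(T))$ toward $(\mathds{O},\,c(0))$. In particular $\Vert c(T)-c\Vert$ does not tend to zero (it tends to $\Vert c\Vert>0$), so your displayed bound $\Vert c(T)-c\Vert\le m_2 e^{-kT}$ is false as written. Consequently, your Lipschitz estimate ``$\Vert\bar c_3-c\Vert$ is linearly controlled by $\Vert c(T)-c\Vert+\Vert\tilde c(T)-\tilde c\Vert$'' uses the wrong reference point: the perturbation you must measure is the deviation of $(c(T),\tilde c(T))$ from the \emph{ideal} $\mathcal{M}^a_3$ initial data $(\mathds{O},c(0))$, and it is the identity $\mathcal{K}^l c(0)=c$ (together with $\bar{\tilde c}=\mathds{O}$) that converts this into a bound on $\Vert\bar c_3-c\Vert$. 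Once you correct the intermediate targets to $(\mathds{O},c(0))$ and use the explicit affine equilibrium formula, your argument goes through and coincides with the paper's.
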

\begin{proof}
Recall that for $o \in I_l,~ j \notin I_l \bigcup \{p-\tilde{p}+l\}$, the dynamical equations of $\mathcal{M}^a_1, \mathcal{M}^a_2, \mathcal{M}^a_3$ takes
\begin{gather*}
\begin{split}
\mathcal{M}^a_1:~~ \dot{s}^l(t) & =\mathcal{X}(t) ({c^l} (t))^\top - s^l(t),\\
\mathcal{M}^a_2:~~~ \dot{c}(t) & = -kc(t),~ \dot{\tilde{c}}(t) = kc(t),\\
\mathcal{M}^a_3:~~ \dot{c}^l_{l}(t) & = k \tilde{c}^l_{p-\tilde{p}+l}(t), ~\dot{c}^l_{o+\tilde{p}}(t) = k \tilde{c}^l_{o}(t), \\
\dot{c}^l_j(t) & = k \tilde{c}^l_j(t),~~
    \dot{\tilde{c}}^l_i(t)= -k \tilde{c}^l_i(t).
\end{split}
\end{gather*}
We obtain $m_1>0$ for $\mathcal{M}^a_1$ through
\begin{align*}
\Vert s(t)-s \Vert & \leq \Vert s(t)-\bar{s} \Vert + \Vert \bar{s}- s \Vert = \Vert s(0)-\mathcal{X}(0)c(0) \Vert e^{-t}.
\end{align*}
Here, $\bar{s} = \mathcal{X}(0)c(0) = s $. Thus, let $t=T$, then $m_1= \Vert s(0)- \mathcal{X}(0)c(0) \Vert$. A similar argument applies to the order species and auxiliary order species in $\mathcal{M}^a_2$ to get the total error
\begin{align*}
\Vert c(T)- \mathds{O}_{p \times \tilde{p}} \Vert  = m_2 e^{-kT},~ \Vert \tilde{c}(T)- c(0) \Vert = m_2 e^{-kT},
\end{align*}
where $m_2 = \Vert c(0) \Vert$. The dynamical flow of species $\{C^l_i\}$, $\{\tilde{C}^l_i\}$ in phase $\mathcal{O}_5$ is actually govern by $\mathcal{M}^a_3$ and runs from $T$ to $2T$. Thus, we obtain the current error estimation for $\forall ~l$ 
\begin{align*}
    \Vert \tilde{c} (2T) - \bar{\tilde{c}} \Vert  = & \Vert \tilde{c} (T)  e^{-kT} - \mathds{O}_{p \times \tilde{p}}  \Vert = \Vert \tilde{c} (T)  \Vert e^{-kT}, \\
    \Vert  c^l(2T) - \bar{c}^l  \Vert  = & \Vert  \mathcal{K}^l \tilde{c}^l(T)(1-e^{-kT}) + c^l (T)  \\
    & - ( \mathcal{K}^l \tilde{c}^l(T)  + c^l (T))\Vert  = \Vert \mathcal{K}^l \tilde{c}^l(T) \Vert e^{-kT},
\end{align*}
where the coefficient matrix $\mathcal{K}^l \in \mathbb{Z}^{p \times p}_{\geq 0}$ is
\begin{equation*}
\mathcal{K}^l=
{\scriptsize
\begin{pmatrix}
E_{l-1}  & \mathds{O}_{(l-1) \times (p-l+1)} \\
\mathds{O}_{\tilde{p} \times (l-1)} & \mathcal{K}_1 \\
\mathds{O}_{\tilde{p} \times (l-1)} & \mathcal{K}_2 \\
 \vdots & \vdots \\
\mathds{O}_{(\tilde{p}-l) \times (l-1)} & \mathcal{K}_{p/\tilde{p}} \\
\end{pmatrix}},
\end{equation*}
where $E_{l-1}$ is a $(l-1)$-dimensional unit matrix and $\mathcal{K}_1$ is
\begin{equation*}
{\scriptsize \bordermatrix{%
 & C_l &  & C_{l+\tilde{p}} & & C_{l+p-\tilde{p}} & \cdots  \cr
~~R_{l} & 0   &  \cdot \cdot & 0 & \cdots & 1 & \mathds{O}_{1 \times (\tilde{p}-l)} \cr
  & \mathds{O}_{(\tilde{p}-1) \times 1}   &  E_{\tilde{p}-1} & \mathds{O}_{(\tilde{p}-1) \times 1} & \cdots  &\mathds{O}_{(\tilde{p}-1) \times 1} & \mathds{O}_{(\tilde{p}-1) \times (\tilde{p}-l)} \cr}},
\end{equation*}
for $\forall b \in \{1,\cdots, \frac{p}{\tilde{p}}-2\}$,  $\mathcal{K}_{b+1}$ is written as 
\begin{equation*}
{\scriptsize \bordermatrix{%
& \cdots & C_{l+(b-1)\tilde{p}} &  &  C_{l+b\tilde{p}} & & C_{l+(b+1)\tilde{p}} & \cdots  \cr
  ~~ R_{l+b\tilde{p}} & \mathds{O}_b^1 & 1 & \cdot \cdot & 0 & \cdots & 0 & \cdots \cr
  & \mathds{O}_b^2 & \mathds{O}_{(\tilde{p}-1) \times 1}  & \cdot \cdot & \mathds{O}_{(\tilde{p}-1) \times 1}  &E_{\tilde{p}-1} & \mathds{O}_{(\tilde{p}-1) \times 1}    & \cdots \cr}},
\end{equation*}
and when $b=\frac{p}{\tilde{p}}-1$, $\mathcal{K}_{b+1}$ becomes
\begin{equation*}
{\scriptsize \bordermatrix{%
& \cdots & C_{l+(b-1)\tilde{p}} &  &  C_{l+b\tilde{p}} &   \cr
  ~~ R_{l+b\tilde{p}} & \mathds{O}_b^1   & 1 & \cdot \cdot & 0 & \cdots  \cr
  & \mathds{O}_b^2   &  \mathds{O}_{(\tilde{p}-1) \times 1}  & \cdot \cdot &  \mathds{O}_{(\tilde{p}-1) \times 1} & E_{\tilde{p}-l} \cr}}
\end{equation*}
with $\mathds{O}_b^1=\mathds{O}_{1 \times (1+(b-1)\tilde{p})}$ and $\mathds{O}_b^2=\mathds{O}_{(\tilde{p}-1) \times (1+(b-1)\tilde{p})}$.
Combining propagation error in the initial condition $\tilde{c}(T), c(T)$ for $\mathcal{M}^a_3$, we have the accumulative error induced in order species for $\forall l$
\begin{align*}
    \Vert \bar{c}^l - c_{\cdot l} \Vert & = \Vert ( \mathcal{K}^l \tilde{c}^l(T)  + c^l (T)) - \mathcal{K}^l c^l(0) \Vert \\
    & \leq \Vert  \mathcal{K}^l \Vert \Vert  c(0) \Vert e^{-kT} + \Vert  c(0) \Vert e^{-kT},
\end{align*}
and the total error upper bound
\begin{equation*}
\begin{split}
\Vert  c^l(2T) - c_{\cdot l}  \Vert & \leq  \Vert  c^l(2T) - \bar{c}^l  \Vert +  \Vert \bar{c}^l - c_{\cdot l} \Vert \\
& \leq \Vert \mathcal{K}^l \tilde{c}^l(T) \Vert e^{-kT} + (1+\Vert  \mathcal{K}^l \Vert) \Vert  c(0) \Vert e^{-kT} \\
& \leq (1+2\Vert  \mathcal{K}^l \Vert)\Vert  c(0) \Vert e^{-kT} + \Vert \mathcal{K}^l \Vert m_2 e^{-2kT}.
\end{split}
\label{eq:O_5_err1}
\end{equation*}
For $\{\tilde{C}^l_i\}$, $\mathcal{M}^a_3$ ensures that $\Vert \tilde{c} (2T) - \bar{\tilde{c}} \Vert \leq \Vert \tilde{c} (T) \Vert e^{-kT}$, and incorporating $\Vert \tilde{c}(T) \Vert \leq \Vert c(0) \Vert + m_2 e^{-kT}$ derived previously, the total error upper bound becomes $\Vert \tilde{c} (2T) - \tilde{c} \Vert  \leq \Vert c(0) \Vert e^{-kT} + m_2 e^{-2kT}$ where the additional item $m_2 e^{-2kT}$ appropriately represents the accumulative error. To facilitate subsequent expressions, according to the leading term principle, one can find that $\exists~ m_3, \tilde{m}_3 \in \mathbb{R}_{>0}$, s.t., $\Vert c(2T) - c \Vert <  m_3 e^{-kT}$ and $\Vert \tilde{c}(2T) - \tilde{c} \Vert < \tilde{m}_3 e^{-kT}$, where $\tilde{m}_3= \Vert c(0) \Vert $, $m_3=\mathop{\max}\limits_{l\in\{1,...,\tilde{p}\}}\{(1+2\Vert  \mathcal{K}^l \Vert)\Vert c(0) \Vert \}$ since $\lim_{t \to \infty} \frac{e^{-2kt}}{e^{-kt}} = 0$.
\end{proof}

Notably, in the module $\mathcal{M}^a_3$, $\bar{\tilde{c}} = \tilde{c}$, so the propagation error in initial conditions alters the original constant coefficient $\Vert \tilde{c}(T) \Vert$, rather than the equilibrium value. 

\subsection{Feedforward Module Error}
\begin{figure}[!t]
\centerline{\includegraphics[width=\columnwidth]{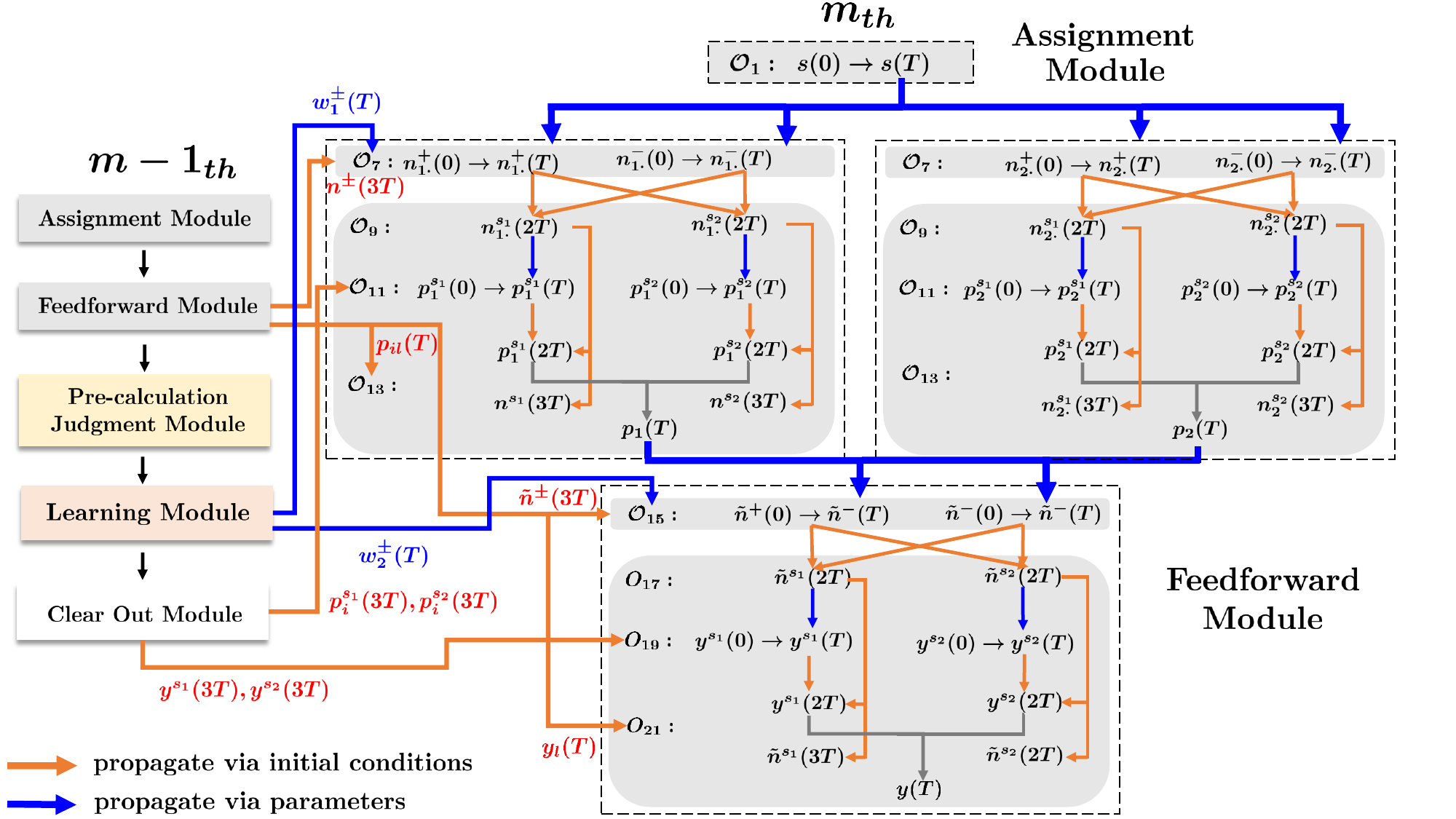}}
\caption{Schema of the propagation of error via initial concentrations and parameters from 
the $m-1$th to the $m$th iteration in the Feedforward module. The top two blocks represent the hidden layer neurons, and the bottom one denotes the output neuron.}
\label{fig:6}
\end{figure}
The feedforward module of the two-two-one FCNN comprises eight sub-computational modules occupying eight odd phases from $\mathcal{O}_7$ to $\mathcal{O}_{21}$, where linear weighted sum BCRN (lwsBCRN) designed for modules $\mathcal{M}^f_{l_1}, \mathcal{M}^f_{l_2}$ aiming to obtain $\mathcal{N}^m, \tilde{\mathcal{N}}^m$ in (\ref{eq:weightsum}) and nonlinear sigmoid activation BCRN (sigBCRN) devised for $ \mathcal{M}^f_{n^j_1}, \mathcal{M}^f_{n^j_2}$ with $j=\{1,2,3\}$ serving to compute the sigmoid activation function. Here, considering the similarity, we focus on the derivation of the first feedforward layer, i.e., $\mathcal{M}^f_{l_1}$ and $\mathcal{M}^f_{n^j_1}$. Note that $n^{\pm}(t) \in \mathbb{R}^{2 \times \tilde{p}}_{\geq 0}$ serves to denote concentrations of the net input species pair $\{N^+_{il},N^-_{il}\}^2_{i=1}$, $w^{\pm}_1(t) \in \mathbb{R}^{2 \times 3}_{\geq 0}$, $w^{\pm}_2(t) \in \mathbb{R}^{1 \times 3}_{\geq 0}$ is defined for concentrations of the weight species pair $\{W^+_{q}, W^-_{q}\}^6_{q=1}$ and the bias species pair $\{B^+_{i}, B^-_{i}\}^3_{i=1}$, and $p^{\pm}(t), p_i(t) \in \mathbb{R}^{2 \times \tilde{p}}_{\geq 0}$ indicates concentrations of hidden-layer output species pairs $\{P^{+}_{il}, P^-_{il}\}^2_{i=2}$, and 
hidden-layer output species $\{P^l_i\}^2_{i=1}$, respectively \cite{fan2023automatic}.
\subsubsection{lwsBCRN}
Both types of errors are present in $\mathcal{M}^f_{l_1}$. The current error should be obtained from the proven exponential stability of the lwsBCRN, and as displayed in Fig.\ref{fig:6}, the accumulative errors stem from the inaccuracies in parameter $s(T)$, $w^{\pm}_{1}(0)$ and initial condition $n^{\pm}(0)$. Here, only the vertical propagation from the input species needs consideration, and the estimation is presented below.


\begin{lemma}
Let $\mathcal{N}^1_+, \mathcal{N}^1_-$ denote the standard positive and negative part of $\mathcal{N}^1$, then there exist $m^+_{4}, m^-_{4} > 0$ that only rely on global initial concentrations such that the total errors for computing $\mathcal{N}^1_{\pm}$ are controlled by $m^{\pm}_{4} e^{-T}$, respectively.
\end{lemma}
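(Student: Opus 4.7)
The plan is to decompose the total error $\|n^{\pm}(T)-\mathcal{N}^1_{\pm}\|$ via the triangle inequality into a \emph{current error} $\|n^{\pm}(T)-\bar n^{\pm}\|$, measuring the gap between the lwsBCRN trajectory at time $T$ and the limiting steady state of its dynamics launched from $(n^{\pm}(0), w_1^{\pm}(0), s(T))$, and an \emph{accumulative error} $\|\bar n^{\pm}-\mathcal{N}^1_{\pm}\|$, measuring the gap between that LSS and the ideal matrix product $(\mathcal{W}_1^1)_{\pm}\,\Xi^1$. Because $m=1$, the injected quantities $w_1^{\pm}(0)$ and $n^{\pm}(0)$ coincide with their standard counterparts, so the only vertical channel through which upstream error can reach $\mathcal{M}^f_{l_1}$ is the input $s(T)$, whose deviation from $s$ is already controlled by Proposition~\ref{prop:assign_1}.

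For the current error, I would read the lwsBCRN dynamics in phase $\mathcal{O}_7$ off Part~I: with $w_1^{\pm}$ and $s$ acting as catalytic species throughout the phase, the net-input species pair satisfies a linear ODE of the form $\dot n^{\pm}(t)=w_1^{\pm}(0)\,s(T)-n^{\pm}(t)$ with unique LSS $\bar n^{\pm}=w_1^{\pm}(0)\,s(T)$. Direct integration gives $\|n^{\pm}(T)-\bar n^{\pm}\|\le \|n^{\pm}(0)-\bar n^{\pm}\|\,e^{-T}$, which has the desired rate $e^{-T}$ with a coefficient that, using $\|s(T)\|\le\|s\|+m_1 e^{-T}$ from Proposition~\ref{prop:assign_1}, is bounded purely by global initial data.

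For the accumulative error, the ideal value $\mathcal{N}^1_{\pm}=w_1^{\pm}(0)\,s$ has exactly the same algebraic structure as $\bar n^{\pm}$ with $s$ in place of $s(T)$, so $\bar n^{\pm}-\mathcal{N}^1_{\pm}=w_1^{\pm}(0)(s(T)-s)$ and hence $\|\bar n^{\pm}-\mathcal{N}^1_{\pm}\|\le \|w_1^{\pm}(0)\|\cdot m_1 e^{-T}$. Adding the two bounds and absorbing the higher-order cross terms through the same leading-term principle used in the proof of Proposition~\ref{prop:assign_1} yields the required form $\|n^{\pm}(T)-\mathcal{N}^1_{\pm}\|\le m_4^{\pm}\,e^{-T}$, with constants expressible entirely in terms of the injected concentrations $n^{\pm}(0)$, $w_1^{\pm}(0)$, $s(0)$ and $c(0)$.

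The main obstacle is verifying that the lwsBCRN really reduces to the simple linear catalytic ODE above during $\mathcal{O}_7$, so that (i) the LSS is exactly the matrix product $w_1^{\pm}(0)\,s(T)$ and (ii) the accumulative-error term depends \emph{linearly} on $\|s(T)-s\|$ rather than through an unspecified Lipschitz constant. Since Part~I already certifies global exponential stability of lwsBCRN and identifies both $w_1^{\pm}$ and $s$ as catalysts in this phase (hence constant parameters for the purposes of dynamics), this reduction should be immediate; the rest is bookkeeping to ensure every constant traces back only to user-injected quantities and to the previously established $m_1$.
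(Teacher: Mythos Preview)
Your proposal is correct and follows essentially the same route as the paper: triangle-inequality split into a current error $\|n^{\pm}(T)-\bar n^{\pm}\|=\|n^{\pm}(0)-w_1^{\pm}(0)s_a(T)\|e^{-T}$ from the linear catalytic ODE, plus an accumulative error $\|\bar n^{\pm}-\mathcal{N}^1_{\pm}\|\le\|w_1^{\pm}(0)\|\,m_1 e^{-T}$ coming solely from the input-species deviation, then absorption of the $e^{-2T}$ cross term into the leading $e^{-T}$ term. The only cosmetic discrepancy is that the paper works with the augmented input matrix $s_a=(s_{1\cdot},s_{2\cdot},\mathds{1}^\top)^\top$ (to accommodate the bias row) rather than $s$ itself, arriving at $m_4^{\pm}=\|n^{\pm}(0)-w_1^{\pm}(0)s_a(0)\|+2\|w_1^{\pm}(0)\|m_1$; your argument goes through unchanged once you carry the dummy row along.
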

\begin{proof}
Consider the dynamical equations of $\mathcal{M}^f_{l_1}$,
\begin{align} \label{eq:21}
\dot {n}^{\pm} (t) & =  w^{\pm}_{1} (t) s_a (t) - n^{\pm}(t),
\end{align}
where $s_a(t) = (s_{1\cdot}(t), s_{2\cdot}(t), \mathds{1}^{\top})^{\top}$. The current error is
\begin{gather*}
\begin{split}
\Vert n^{\pm}(T) - \bar{n}^{\pm} \Vert & =  \Vert n^{\pm}(0)-w^{\pm}_1(0)s_a(T) \Vert e^{-T}. 
\end{split}
\end{gather*}
The propagation error forces $\Vert n^{\pm}(0)-w^{\pm}_{1}(0)s_a(T) \Vert$ to be further estimated, i.e., 
\begin{equation*}
 \begin{split}
\Vert n^{\pm}(0) & - w^{\pm}_{1}(0)s_a(T) \Vert \leq \Vert n^{\pm}(0) - w^{\pm}_{1}(0) \{s_a\}_{\mathcal{O}_1} \Vert  \\
& + \Vert w^{\pm}_{1}(0) \Vert \Vert s_a(T) - \{s_a\}_{\mathcal{O}_1} \Vert, 
 \end{split}   
\end{equation*}
where $\{s_a\}_{\mathcal{O}_1} = \{((s_{1\cdot})^{\top}, (s_{2\cdot})^{\top}, \mathds{1})^{\top}\}_{\mathcal{O}_1}$. Since $\Vert A_1 \Vert \leq \Vert \left(A_1 ~ A_2 \right) \Vert$, the inequality above becomes
\begin{equation*}
 \begin{split}
\Vert n^{\pm}(0) & -w^{\pm}_{1}(0)s_a(T) \Vert \leq \Vert n^{\pm}(0) - w^{\pm}_{1}(0) \{s_a\}_{\mathcal{O}_1} \Vert  \\
& + \Vert w^{\pm}_{1}(0) \Vert \Vert s(0) - \mathcal{X}(0)c(0) \Vert e^{-T}.
 \end{split}   
\end{equation*} 
Incorporating the accumulative error on LSSs deduced as 
\begin{gather*}
    \Vert \bar{n}^{\pm} - \mathcal{N}^1_{\pm} \Vert \leq \Vert w^{\pm}_{1}(0) \Vert \Vert s(0)- \mathcal{X}(0)c(0) \Vert e^{-T},
\end{gather*}
the total error upper bound is $\Vert n^{\pm}(T) - \mathcal{N}^{1}_{\pm}\Vert \leq m^{\pm}_{4} e^{-T}$, and $m^{\pm}_{4} = \Vert n^{\pm}(0)-w^{\pm}_{1} s_a(0) \Vert +2 \Vert w^{\pm}_{1}(0) \Vert m_1$.
\end{proof}

\subsubsection{sigBCRN} 
As depicted in Fig \ref{fig:6}, the errors in calculating $\Upsilon^1$ result from the propagation through the sub-computational modules $\mathcal{M}^f_{n^j_1}$ that occur in phases $\mathcal{O}_9$, $\mathcal{O}_{11}$, $\mathcal{O}_{13}$, separately, and contain two sigBCRNs corresponding to the two nodes in the hidden layer. The module $\mathcal{M}^f_{n^1_1}$ in phase $\mathcal{O}_9$ contributes to annihilate $\{N^+_{il}\}^2_{i=1}$ and $\{N^-_{il}\}^2_{i=1}$ to obtain the difference. According to Lemma 1 in \cite{fan2023automatic}, initial concentrations of $\mathcal{M}^f_{n^1_1}$ determine the compatibility class, leading to two types of LSSs. Thus, the accumulative error here is entirely attributed to initial conditions arising from the vertical propagation from $\mathcal{M}^f_{l_1}$. We give a consolidated result to illustrate the error generated in phase $\mathcal{O}_9$.


\begin{lemma}\label{prop_O9}
Let $\vert \mathcal{N}^1_{il} \vert = \vert \{\mathcal{N}^1_{+,il}\}_{\mathcal{O}_7} - \{\mathcal{N}^1_{-,il} \}_{\mathcal{O}_7}\vert$ denotes the absolute value of standard net input. We can find $m_4 > 0$, $\{m^{'i}_j, \alpha^{'i}_j\} > 0$ with $i \in \{1,2\}, j \in \{1,2\}$ such that 
\begin{gather*}
\begin{split}
\vert n^{s_1}_{il}(2T) -\vert \mathcal{N}^1_{il} \vert \vert & \leq m_4 e^{-T} + m^{'i}_j e^{-\alpha^{'i}_j}, \\
\vert n^{s_2}_{il}(2T) - 0 \vert & \leq m^{'i}_j e^{-\alpha^{'i}_j},
\end{split}
\end{gather*}
where $i=1,2$, and if $\mathcal{N}^1_{il} > 0$ $j=1, s_1 = +, s_2=-$, otherwise $j=2, s_1 = -, s_2=+$.
\end{lemma}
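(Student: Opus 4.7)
The plan is to combine explicit integration of the annihilation dynamics acting in $\mathcal{O}_9$ with the propagated bound on the initial data supplied by the upstream lwsBCRN $\mathcal{M}^f_{l_1}$. I focus on one pair $(n^+_{il},n^-_{il})$; the two sign cases of $\mathcal{N}^1_{il}$ are symmetric, so only $\mathcal{N}^1_{il}>0$ (that is $j=1$, $s_1=+$, $s_2=-$) need be written out.

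First, I would recall that $\mathcal{M}^f_{n^1_1}$ is the annihilation MAS $N^+_{il}+N^-_{il}\to\emptyset$, so under MAK the ODEs $\dot n^+_{il}=\dot n^-_{il}=-k\,n^+_{il}n^-_{il}$ yield the conservation $n^+_{il}(t)-n^-_{il}(t)\equiv\Delta_{il}:=n^+_{il}(T)-n^-_{il}(T)$ on $t\in[T,2T]$, and the smaller species obeys a scalar logistic-type ODE whose closed form shows $n^{s_2}_{il}(t)\to 0$ and $n^{s_1}_{il}(t)\to\vert\Delta_{il}\vert$ exponentially at rate proportional to $k\vert\Delta_{il}\vert$ (with $s_1,s_2$ selected by the sign of $\Delta_{il}$). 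Evaluating at $t=2T$ yields the current-error bounds
\[
\vert n^{s_2}_{il}(2T)\vert\le m^{'i}_j e^{-\alpha^{'i}_j T},\qquad \bigl\vert n^{s_1}_{il}(2T)-\vert\Delta_{il}\vert\bigr\vert\le m^{'i}_j e^{-\alpha^{'i}_j T},
\]
with $\alpha^{'i}_j$ a positive multiple of $\vert\Delta_{il}\vert$ and $m^{'i}_j$ a function of $n^{\pm}_{il}(T)$.

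Second, I would feed in the vertical propagation from $\mathcal{M}^f_{l_1}$. The preceding lemma gives $\Vert n^{\pm}(T)-\mathcal{N}^1_{\pm}\Vert\le m^{\pm}_4 e^{-T}$, hence $\vert\Delta_{il}-\mathcal{N}^1_{il}\vert\le(m^+_4+m^-_4)e^{-T}=:m_4 e^{-T}$. For $T$ sufficiently large and $\mathcal{N}^1_{il}\neq 0$, $\Delta_{il}$ inherits the sign of $\mathcal{N}^1_{il}$, so the labels $s_1,s_2,j$ determined by the real system coincide with those of the reference model. The triangle inequality
\[
\bigl\vert n^{s_1}_{il}(2T)-\vert\mathcal{N}^1_{il}\vert\bigr\vert \le \bigl\vert n^{s_1}_{il}(2T)-\vert\Delta_{il}\vert\bigr\vert + \bigl\vert\vert\Delta_{il}\vert-\vert\mathcal{N}^1_{il}\vert\bigr\vert
\]
then yields the announced $m_4 e^{-T}+m^{'i}_j e^{-\alpha^{'i}_j T}$ bound, while the bound on $n^{s_2}_{il}(2T)$ needs no accumulative contribution since the LSS is $0$ for both the real and the reference systems.

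The main obstacle will be ensuring that $\alpha^{'i}_j$ stays bounded away from zero and that $m^{'i}_j$ stays bounded above uniformly in $T$: the logistic decay rate scales with $\vert\Delta_{il}\vert$, so a uniform positive lower bound on $\vert\mathcal{N}^1_{il}\vert$ (inherited from the non-degeneracy of the weighted input and from $T$ being large enough that $m_4 e^{-T}\ll\vert\mathcal{N}^1_{il}\vert$) is needed, and the upstream estimate $\Vert n^{\pm}(T)\Vert\le\Vert w^{\pm}_{1}(0)\Vert\Vert\{s_a\}_{\mathcal{O}_1}\Vert+m^{\pm}_4 e^{-T}$ should be used to turn the apparent $n^{\pm}_{il}(T)$-dependence of $m^{'i}_j$ into a dependence on the global initial data alone. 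This nonlinearity is what distinguishes the sigBCRN estimate from the purely linear lwsBCRN estimate of the previous lemma.
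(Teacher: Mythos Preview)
Your proposal is correct and follows essentially the same route as the paper: conservation of $n^+_{il}-n^-_{il}$ under the annihilation dynamics, exponential decay of the smaller species at a rate governed by $\vert\Delta_{il}\vert$, the accumulative bound $m_4=m^+_4+m^-_4$ from the upstream lemma, and the triangle inequality to combine. The only organizational differences are that the paper invokes the exponential-stability characterization from Lemma~1 of Part~\Rmnum{1} rather than writing out the logistic solution, specifies the admissible ranges $m^{'i}_1>\{\mathcal{N}^1_{-,il}\}_{\mathcal{O}_7}+m^-_4 e^{-T}$ and $\alpha^{'i}_1\in(0,\vert\mathcal{N}^1_{il}\vert-m_4 e^{-T})$ directly (which is exactly your ``main obstacle'' paragraph made concrete), and defers the sign-consistency issue to a separate subsequent lemma rather than handling it inline.
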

\begin{proof}
The dynamical equations of $\mathcal{M}^f_{n^1_1}$ takes $\dot{n}^+_{il} (t) = \dot{n}^-_{il}(t) = -n^+_{il}(t) n^-_{il}(t)$. In case of $\mathcal{N}^1_{il} > 0$, $\{\bar{n}^+_{il}-\bar{n}^-_{il}\}_{\mathcal{O}_7} = \mathcal{N}^1_{il}$, and $\bar{n}^+_{il}$ in $\mathcal{M}^f_{l_1}$ is designed to maintain positive. Thus, the accumulative error is described as 
\begin{gather*}
\begin{split}
\vert \bar{n}^+_{il} - \mathcal{N}^1_{il} \vert & = \vert n^+_{il}(T) - n^-_{il}(T) -\{ (\mathcal{N}^1_{+,il} - \mathcal{N}^1_{+,il} ) \}_{\mathcal{O}_7} \vert \\ 
      & \leq \vert n^l_{ip}(T) - \{\mathcal{N}^1_{+,il}\}_{\mathcal{O}_7}  \vert + \vert n^l_{in}(T)-\{\mathcal{N}^1_{-,il}\}_{\mathcal{O}_7} \vert \\
      & \leq (m^+_{4}+m^-_{4})e^{-T}  \triangleq m_4 e^{-T}.
\end{split}
\end{gather*}  
Besides, accumulative errors when zeroing species $\{N^-_{il}\}^2_{i=1}$ are deduced through revising the feasible ranges of exponential stability parameters in Lemma 1 \cite{fan2023automatic} by considering the inaccuracies in $n^{\pm}_{il}(T)$, that is
\begin{equation*}
\vert n^-_{il}(2T) - 0 \vert = \vert n^-_{il}(2T) - \bar{n}^-_{il} \vert < m^{'i}_1 e^{-\alpha^{'i}_1 T},
\end{equation*}
where $\forall m^{'i}_1 > \{\mathcal{N}^1_{-,il}\}_{\mathcal{O}_{7}} + m^-_{4}e^{-T}, \forall \alpha^{'i}_1 \in (0,\vert \mathcal{N}^1_{il} \vert - m_4e^{-T})$. Therefore, with the same revised parameters, we find the total error caused by $\{N^+_{il}\}^2_{i=1}$ ends up in 
\begin{equation*}
\vert n^+_{il}(2T) - \vert \mathcal{N}^1_{il} \vert \vert < m^{'i}_1 e^{- \alpha^{'i}_1 T} + m_4 e^{-T}.
\end{equation*}
Following the similar analysis, if $\mathcal{N}^1_{il} < 0$, we have the total error estimation with $\forall m^{'i}_2 > \{\mathcal{N}^1_{+,il}\}_{\mathcal{O}_{7}} + m^+_{4}e^{-T}, \forall \alpha^{'i}_2 \in (0,\vert \mathcal{N}^1_{il}\vert - m_4e^{-T})$ as follows
\begin{equation*}
    \begin{split}
 \vert n^+_{il}(2T) \vert & < m^{'i}_2 e^{-\alpha^{'i}_2 T},\\
 \vert n^-_{il}(2T) - \vert \mathcal{N}^1_{il} \vert \vert & < m_4 e^{-T} + m^{'i}_2 e^{-\alpha^{'i}_2 T}.
    \end{split}
\end{equation*}
Then, the unified form is obtained. 
\end{proof}

In effect, the estimation above is subject to a hidden assumption. It remains vaild when the propagation errors on initial conditions from $\mathcal{O}_7$ are unable to alter the attractive region of the annihilation reaction in $\mathcal{M}^f_{n^1_1}$. However, intuitively, another case cannot be dismissed if the error significantly impacts the attractive region. Thus, we present the following lemma to show that this scenario relies on $T$, and an appropriate phase length is feasible to identify to mitigate the worst case.

\begin{lemma}
\label{le:feasible_T}
Consider these two alternating operating systems
\begin{equation*}
\begin{array}{cc}
\sum_{1}: \begin{split}
    \dot{x} (t) &= c^m_1 -x(t) \\
    \dot{y}(t) & = c^m_2 - y(t),
\end{split}     
    ~& 
\sum_{2}:
\begin{split}
    \dot{x}(t) &=-x(t)y(t) \\
    \dot{y}(t) & =-x(t) y(t),
\end{split}
\end{array}
\end{equation*}
where constants $c^m_1, c^m_2$ will change over the  round $m$. Systems $\sum_1, \sum_2$ can separately last for any fixed time length $T$ in every round. Given the initial condition $x^1_0=y^1_0=0$ in the first round, if $T > \ln{(1+\frac{\sum ^{m-1}_{j=1}\vert c^{m-j}_1 -c^{m-j}_2\vert}{\vert c^{m}_1 -c^{m}_2\vert})}$, it holds that $(c^{m}_1-c^{m}_2)(x(2mT)-y(2mT)) \geq 0$  for all $m$.
\end{lemma}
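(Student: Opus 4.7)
The plan is to exploit a simple conservation law hidden in $\sum_2$: since $\dot x-\dot y=-xy+xy=0$, the scalar $d(t):=x(t)-y(t)$ is constant on every annihilation subinterval $[(2m-1)T,2mT]$. Hence it suffices to track the one-dimensional quantity $d_m:=x(2mT)-y(2mT)$, and only the $\sum_1$ half of each round contributes to its evolution; the joint nonlinear flow of $(x,y)$ during $\sum_2$ never needs to be integrated.

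First I would integrate $\sum_1$ explicitly on $[2(m-1)T,(2m-1)T]$ starting from $(x(2(m-1)T),y(2(m-1)T))$, obtaining
\begin{equation*}
x((2m-1)T)-y((2m-1)T)=(1-e^{-T})(c_1^m-c_2^m)+e^{-T}d_{m-1},
\end{equation*}
and combine this with the invariance under $\sum_2$ to get the scalar recursion
\begin{equation*}
d_m=(1-e^{-T})\Delta_m+e^{-T}d_{m-1},\qquad d_0=0,
\end{equation*}
where $\Delta_k:=c_1^k-c_2^k$. A one-line induction on $m$, using $1-e^{-T},e^{-T}\in(0,1)$, then produces the crude a-priori bound $|d_{m-1}|\leq\sum_{k=1}^{m-1}|\Delta_k|$.

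Finally I would test the sign of $\Delta_m d_m$ directly from the recursion. The diagonal term $(1-e^{-T})\Delta_m^2$ is nonnegative, while the cross term $e^{-T}\Delta_m d_{m-1}$ is bounded in magnitude by $e^{-T}|\Delta_m|\sum_{j=1}^{m-1}|\Delta_{m-j}|$. Forcing the diagonal term to dominate gives $(e^T-1)|\Delta_m|\geq\sum_{j=1}^{m-1}|\Delta_{m-j}|$, which is precisely the hypothesis $T>\ln\!\bigl(1+\sum_{j=1}^{m-1}|\Delta_{m-j}|/|\Delta_m|\bigr)$ rearranged. The main point to watch is the choice of a-priori bound on $|d_{m-1}|$: using the closed-form solution of the recursion with its geometric weights $e^{-jT}$ would produce a sharper but differently shaped threshold $T>\ln(S/|\Delta_m|)$, so one must deliberately use the crude induction bound that discards the $(1-e^{-T})$ and $e^{-T}$ factors in order to reproduce the $\ln(1+S/|\Delta_m|)$ form stated in the lemma.
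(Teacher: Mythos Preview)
Your proposal is correct and follows essentially the same route as the paper: both arguments hinge on the invariance of $x-y$ under $\Sigma_2$, derive the same scalar recursion $d_m=(1-e^{-T})\Delta_m+e^{-T}d_{m-1}$, and bound $|d_{m-1}|$ crudely by $\sum_{k=1}^{m-1}|\Delta_k|$ (the paper writes out the closed form $\sum_{j=0}^{m-2}e^{-jT}(1-e^{-T})\Delta_{m-1-j}$ before discarding the weights). The only cosmetic difference is that the paper splits into sign cases $(c_1^m-c_2^m)(x_0^m-y_0^m)\gtrless 0$ and handles the ``bad'' case explicitly, whereas you test $\Delta_m d_m\ge 0$ directly via the diagonal/cross-term decomposition; the resulting threshold inequality $(e^T-1)|\Delta_m|\ge\sum_{j=1}^{m-1}|\Delta_{m-j}|$ is identical.
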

\begin{proof}
We specify that $x^m(t)=x(t+2(m-1)T)$ when $t \in \left[0, 2T\right]$. The inequality is always true when $T= \infty$ (reach the equilibrium). When $T$ is finite, the trajectory difference between $x^m(t)$ and $y^m(t)$ for $t \in \left[0, T\right]$ is 
\begin{equation*}
x^m(t) - y^m(t) = (c^m_1-c^m_2)(1-e^{-t})+ e^{-t}(x^m_0-y^m_0).
\end{equation*}
Then, $\sum_2$ ensures that with $t \in \left(T,2T\right], x^m(t) - y^m(t) = x^m(T)-y^m(T)$. It obvious that $(c^{1}_1-c^{1}_2)(x^1(2T)-y^1(2T)) \geq 0$. For $\forall m \geq 2$, if $(c^{m}_1-c^{m}_2)(x^m_0-y^m_0) \geq 0$, the result is naturally true since the response curve of $x(t)$ is always on top of $y(t)$ in $\left[2(m-1)T,2mT\right]$. Uncertainties always happen when $(c^{m}_1-c^{m}_2)(x^m_0-y^m_0) < 0$. If $c^m_1 - c^m_2 <0$, while $x^m_0-y^m_0 = x^{m-1}(2T) - y^{m-1}(2T) > 0$, the inequality that we expect 
\begin{equation}
(c^m_1-c^m_2)(1-e^{-T})+ e^{-T}(x^m_0-y^m_0) < 0
\label{eq:ineq_lemma4}
\end{equation}
is not true for some finite $T$. Since $x^m_0-y^m_0$ can be written as the recurrence formula below
\begin{equation*}
\begin{split}
x^m_0-y^m_0 & = (c^{m-1}_1-c^{m-1}_2)(1-e^{-T}) + e^{-T}(x^{m-1}_0-y^{m-1}_0)\\
&=\sum^{m-2}_{j=0} \{e^{-jT} (1-e^{-T}) (c^{m-j-1}_1-c^{m-j-1}_2)\}\\
& < \sum^{m-1}_{j=1} \vert c^{m-j}_1-c^{m-j}_2 \vert,
\end{split}
\end{equation*}
we can find a feasible range of $T$ to make \eqref{eq:ineq_lemma4} hold, i.e.,
\begin{equation}
    e^{T} > 1+\frac{\sum^{m-1}_{j=1} \vert c^{m-j}_1 - c^{m-j}_2 \vert}{c^m_2 - c^m_1}.
    \label{eq:ineq2_lemma4}
\end{equation}
A similar analysis also applies to $c^m_1 > c^m_2, x^m_0 < y^m_0$, so the general results is denoted as $T > \ln{(1+\frac{\sum ^{m-1}_{j=1}\vert c^{m-j}_1 -c^{m-j}_2\vert}{\vert c^{m}_1 -c^{m}_2\vert})}$.
\end{proof}
\begin{remark}
It is possible to provide a rough range estimation for $T$ based solely on the information of FCNN since $c^m_1 = (w^+_{1}(0)s_a(T))_{il}$, $c^m_2=(w^-_{1}(0)s_a(T))_{il}$ in BFCNN, which are close to the standard values $\{\mathcal{N}^1_{+,il}\}_{\mathcal{O}_7}, \{\mathcal{N}^1_{-,il}\}_{\mathcal{O}_7}$. In general, the right side of \eqref{eq:ineq2_lemma4} simply requires $T$ to surpass 15-20 while considering the weights range and typical iteration counts, which is not a tough requirement for the oscillator.
\end{remark}

We now address the errors in $\mathcal{M}^f_{n^2_1}$ with equations 
\begin{equation*}
\mathcal{M}^f_{n^2_1}:~~\dot{p}^{\pm}_{il}(t) = n^{\pm}_{il} (t) (\frac{1}{2}-p^{\pm}_{il}(t))  
\end{equation*}
with $i=1,2$, under the condition that $T$ is designed within the valid range above, and the discussion is twofold. In the case of $\mathcal{N}^1_{il} > 0$, we present the distance between the trajectory and the standard value $\frac{1}{2}$ as $ \vert p^+_{il}(t) - \frac{1}{2}\vert = \vert p^+_{il} (0) - \frac{1}{2} \vert e^{-n^+_{il}(2T)t}$,    
from which the current error and propagation error stored in $n^+_{il}(2T)$, as the constant parameter here, hinders the hidden-layer output species $P^+_{il}$ from achieving the target value at the original rate, i.e., errors emerging in the power item impact the achievable accuracy of systems within a finite time. Similar arguments can be applied in the negative case.
As for $\mathcal{M}^f_{n^3_1}$, recall that the dynamical equations is designed as \cite{fan2023automatic} 
\begin{align*}
\mathcal{M}^f_{n^3_1}:~\dot{p_{il}^+}(t) &= n_{il}^+(t)p^+_{il}(t)(1-p^+_{il}(t)), ~\dot{n_{il}^+}(t) = -n^+_{il}(t),\\ 
\dot{p_{il}^-}(t) & = n_{il}^-(t)p^-_{il}(t)(p^-_{il}(t)-1),~\dot{n_{il}^-}(t) = -n^-_{il}(t),\\
\dot{p}_{il}(t) & = p^+_{il}(t) + p^-_{il}(t) - p_{il}(t).  
\end{align*}
The current error needs to be assigned the coefficient of a new valid range, and the accumulative errors on LSSs strongly depend on $p^{\pm}_{il}(T)$ and $n^{\pm}_{il}(2T)$. Then, incorporating Lemma \ref{prop_O9}, we give the total error upper bound to compute hidden-layer outputs $\Upsilon^1$ below.
\begin{proposition} \label{prop-psp}
Let $\Upsilon^1_{il}$ be the positive standard value of hidden-layer outputs, and $s_1, s_2$ be sign markers. There exist $m^{\pm}_{5i}, m^{\pm}_{6i}, \tilde{m}^{\pm}_{il}, a^{\pm}_{qi}>0$ with $i \in \{1,2\}, s \in \{s_1,s_2\}, q =\{4,5,6\}$ such that
\begin{equation*}
    \begin{split}
\vert p^{s_1}_{il}(2T) - \Upsilon^1_{il}  \vert & < (\tilde{m}^{s_1}_{il} +\frac{m_4}{4}) e^{-T} + \frac{m^{'i}_j}{4} e^{-\alpha^{'i}_j T} \\ 
 + m^{s_1}_{6i}m^{s_1}_{5i}&(m^{'i}_1 Te^{-\alpha^{'i}_j T} + m_4 Te^ {-T}+ e^{-\vert \mathcal{N}^1_{il} \vert T}),\\
 \vert  p^{s_2}_{il}(2T)  - 0 \vert &  \leq \frac{m^{'i}_j}{4} e^{-\alpha^{'i}_j T} + m^{s_2}_{5i}m^{'i}_j Te^{-\alpha^{'i}_j T},\\
\vert  p_{il}(T)  - \Upsilon^1_{il} \vert & < a^{s}_{4i}T e^{-T} +a^{s}_{5i}Te^{-\alpha^{'i}_{j} T} + a^{s}_{6i} e^{-\vert \mathcal{N}^1_{il} \vert T},
    \end{split}
\end{equation*}
where $i \in \{1,2\}$, and if $\mathcal{N}^1_{il}>0$ , $s_1 = +, s_2=-, j=1, s=s_1$, otherwise $s_1 = -, s_2 =+, j=2, s = s_2$.
\end{proposition}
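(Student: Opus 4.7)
The plan is to handle the two sub-modules $\mathcal{M}^f_{n^2_1}$ and $\mathcal{M}^f_{n^3_1}$ consecutively, tracking how the bounds from Lemma \ref{prop_O9} propagate through each. Throughout, I treat the case $\mathcal{N}^1_{il}>0$ (so $s_1=+$, $s_2=-$); the opposite sign case is symmetric.

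First, in $\mathcal{M}^f_{n^2_1}$ the species $n^{\pm}_{il}$ have no dynamics in this phase, so they act as constant parameters equal to $n^{\pm}_{il}(2T)$. The equation $\dot{p}^{\pm}_{il}=n^{\pm}_{il}(\tfrac{1}{2}-p^{\pm}_{il})$ integrates explicitly to $p^{\pm}_{il}(t)=\tfrac12+(p^{\pm}_{il}(0)-\tfrac12)\exp(-n^{\pm}_{il}(2T)t)$. Assuming a clean start ($p^{\pm}_{il}(0)=0$ after clear-out), this gives $|p^{\pm}_{il}(T)-\tfrac12|=\tfrac12\exp(-n^{\pm}_{il}(2T)T)$. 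For $s_1=+$, Lemma \ref{prop_O9} bounds $n^{+}_{il}(2T)\geq |\mathcal{N}^1_{il}|-m_4e^{-T}-m^{'i}_1e^{-\alpha^{'i}_1 T}$; splitting the exponential and using $e^{-a-b}\leq e^{-a}(1+|b|)$ on the small-error part yields the $\frac{m_4}{4}e^{-T}+\frac{m^{'i}_1}{4}e^{-\alpha^{'i}_1 T}$ pieces plus a clean $\tfrac12 e^{-|\mathcal{N}^1_{il}|T}$ residue. For $s_2=-$, we only need that $p^{-}_{il}(T)$ stays close to zero, using $n^{-}_{il}(2T)\leq m^{'i}_1 e^{-\alpha^{'i}_1 T}$ as an upper bound to control deviation.

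Next, in $\mathcal{M}^f_{n^3_1}$ the driver decays: $n^{\pm}_{il}(t)=n^{\pm}_{il}(2T)e^{-t}$, so the substitution $\tau(t)=\int_0^t n^{+}_{il}(s)\,ds=n^{+}_{il}(2T)(1-e^{-t})$ linearizes the logistic and gives the closed form
\begin{equation*}
p^{+}_{il}(2T)=\frac{p^{+}_{il}(T)}{p^{+}_{il}(T)+(1-p^{+}_{il}(T))\exp\!\bigl(-n^{+}_{il}(2T)(1-e^{-T})\bigr)}.
\end{equation*}
The reference $\Upsilon^1_{il}=(1+e^{-|\mathcal{N}^1_{il}|})^{-1}$ corresponds to substituting the ideal values $p^{+}(T)=\tfrac12$, $n^{+}(2T)=|\mathcal{N}^1_{il}|$ and taking $T\to\infty$. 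I will bound $|p^{+}_{il}(2T)-\Upsilon^1_{il}|$ by adding and subtracting the intermediate object obtained with ideal parameters at finite $T$; the finite-$T$ piece contributes the $e^{-|\mathcal{N}^1_{il}|T}$ term (since $1-e^{-T}$ differs from $1$ by $e^{-T}$, and the outer $\exp$ is evaluated at roughly $-|\mathcal{N}^1_{il}|$), while the parameter/initial-condition piece is handled by a first-order Taylor expansion of the closed-form expression. The partial derivatives with respect to $p^{+}(T)$ and $n^{+}(2T)$ are both uniformly bounded on the feasible range, which supplies the Lipschitz prefactors $m^{s_1}_{5i}, m^{s_1}_{6i}$; the $T$ factors in $Te^{-T}$ and $Te^{-\alpha^{'i}_1T}$ arise precisely from $\partial/\partial n^{+}(2T)$ pulling out the factor $(1-e^{-T})\leq T$. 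The $s_2$ branch is handled similarly but is simpler: since $n^{-}_{il}(2T)$ is small, the logistic barely evolves and $p^{-}_{il}(2T)$ stays close to $p^{-}_{il}(T)$, which is itself $O(e^{-\alpha^{'i}_1 T})$.

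Finally, the output species $p_{il}$ solves $\dot p_{il}=p^{+}_{il}+p^{-}_{il}-p_{il}$, giving
\begin{equation*}
p_{il}(T)=p_{il}(0)e^{-T}+\int_0^T\!\bigl(p^{+}_{il}(s)+p^{-}_{il}(s)\bigr)e^{-(T-s)}\,ds.
\end{equation*}
Against the ideal LSS $\Upsilon^1_{il}$, I split $|p_{il}(T)-\Upsilon^1_{il}|$ into the current-error part (a contracted $e^{-T}$ remnant of the initial condition and the gap between the integrated forcing and its large-$T$ limit) and the propagated part bounded by $\sup_{s\in[0,T]}|p^{+}_{il}(s)+p^{-}_{il}(s)-\Upsilon^1_{il}|$ times the standard $1-e^{-T}\leq 1$ factor. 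Applying the bounds just derived to the integrand and collecting coefficients produces the $a^s_{4i}Te^{-T}+a^s_{5i}Te^{-\alpha^{'i}_jT}+a^s_{6i}e^{-|\mathcal{N}^1_{il}|T}$ shape.

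The main obstacle will be the middle step: the $p^{\pm}$ trajectory in $\mathcal{M}^f_{n^3_1}$ depends \emph{nonlinearly} on two simultaneously-perturbed inputs ($p^{\pm}(T)$ and $n^{\pm}(2T)$), and a naive Grönwall argument would give an $\exp(\int|n^{\pm}|\,ds)$-type constant that is too large to combine with the small $e^{-|\mathcal{N}^1_{il}|T}$ residue. One must instead exploit the explicit closed-form solution of the logistic-with-decaying-driver, take a clean Taylor expansion around the ideal parameters, and bound the Jacobian uniformly on the feasible parameter window guaranteed by Lemma \ref{le:feasible_T}, so that the $Te^{-\alpha^{'i}_jT}$ and $Te^{-T}$ cross terms emerge in the sharp form stated in the proposition.
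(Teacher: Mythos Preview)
Your overall framework—explicit closed-form solutions in each submodule, then a Taylor/mean-value expansion about the ideal parameters with uniformly bounded partials—is exactly what the paper does. But two of your attributions are misplaced, and following them as written would not reproduce the stated terms.

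First, the $\tfrac{m_4}{4}e^{-T}+\tfrac{m^{'i}_j}{4}e^{-\alpha^{'i}_jT}$ pieces do \emph{not} come from your $\mathcal{M}^f_{n^2_1}$ expansion. Applying $e^{-a-b}\le e^{-a}(1+|b|)$ to $\tfrac12\exp(-n^{+}_{il}(2T)T)$ with $n^{+}_{il}(2T)=|\mathcal{N}^1_{il}|+\delta$ gives a correction of order $\tfrac12 e^{-|\mathcal{N}^1_{il}|T}\cdot|\delta|T$, not of order $\tfrac14|\delta|$. In the paper those $\tfrac14$ terms arise from the $\partial h_p/\partial n$ contribution in $\mathcal{M}^f_{n^3_1}$, where $h_p(q,n)=q/(q+(1-q)e^{-n})$ is the \emph{equilibrium} (the $t\to\infty$ limit, not your finite-$T$ formula with $1-e^{-T}$ in the exponent) and $|\partial h_p/\partial n|\le\tfrac14$ uniformly.

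Second, your claim that the $T$ prefactors in $Te^{-T}$ and $Te^{-\alpha^{'i}_jT}$ arise from ``$\partial/\partial n^{+}(2T)$ pulling out the factor $(1-e^{-T})\le T$'' is incorrect: $1-e^{-T}\le 1$ for all $T$, so that derivative produces no growing-in-$T$ factor. In the paper the $T$ factors originate in $\mathcal{M}^f_{n^2_1}$ via the Lipschitz bound $|e^{-aT}-e^{-bT}|\le |a-b|\,T$, giving $|p^{s_1}_{il}(T)-\tfrac12|\le m^{s_1}_{5i}\bigl(m^{'i}_jTe^{-\alpha^{'i}_jT}+m_4Te^{-T}\bigr)+m^{s_1}_{5i}e^{-|\mathcal{N}^1_{il}|T}$; this bound is then multiplied by $|\partial h_p/\partial q|\le m^{s_1}_{6i}=1/\bigl(4p^{s_1}_{il}(T)(1-p^{s_1}_{il}(T))\bigr)$ in the $\mathcal{M}^f_{n^3_1}$ step. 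The remaining $\tilde{m}^{s_1}_{il}e^{-T}$ piece is the \emph{current error} of $\mathcal{M}^f_{n^3_1}$ itself—the distance from $p^{s_1}_{il}(2T)$ to its own equilibrium $\bar p^{s_1}_{il}=h_p(p^{s_1}_{il}(T),n^{s_1}_{il}(2T))$—supplied by the exponential-stability result from Part~\Rmnum{1}; your finite-$T$ formula would need to be compared against this equilibrium, not directly against $\Upsilon^1_{il}$, to isolate that term.

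Your handling of the $s_2$ branch and of the output species $p_{il}$ via the integral representation matches the paper.
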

\begin{proof}
Considering $\mathcal{N}^1_{il}>0$ , in $\mathcal{M}^f_{n^2_2}$, we regard the ideal trajectory of $p^+_{il}(t)$ as $p^{+\ast}_{il}(t)$ ($p^{+\ast}_{il}(0)=p^+_{il}(0)$ when $m=1$) and write the transient error estimation as follows
\begin{gather*}
    \begin{split}
\vert p^+_{il}(t) - p^{+\ast}_{il}(t)
\vert & = \vert p^+_{il}(0) - \frac{1}{2} \vert \vert e^{-n^+_{il}(2T)t} - e^{-\vert \mathcal{N}^1_{il} \vert t} \vert\\
& < \vert p^+_{il}(0) - \frac{1}{2} \vert \vert n^+_{il}(2T)t - \vert \mathcal{N}^1_{il} \vert t  \vert.
    \end{split}
\end{gather*}
It works because $e^{-x}$ has $1$ as its global lipschitz constant on $\mathbb{R}_{\geq 0}$. Substituting the error of $n^+_{il}(2T)$ obtained in the upstream module $\mathcal{M}^f_{n^2_1}$, we have 
\begin{equation*}
\vert p^+_{il}(T) - p^{+\ast}_{il}(T)
\vert < \vert p^+_{il}(0) - \frac{1}{2} \vert (m^{'i}_1 T e^{-\alpha^{'i}_1 T} + m_4 Te^ {-T}).
\end{equation*}
Then, combining $\vert p^{+\ast}_{il}(T) -p^{+}_{il} \vert = \vert p^{+\ast}_{il}(T) - \bar{p}^{+\ast}_{il} \vert = \vert 
p^+_{il}(0) -\frac{1}{2} \vert e^{-\vert \mathcal{N}^1_{il} \vert T}$, the total error ends up being 
\begin{equation*}
\vert p^+_{il}(T) - \frac{1}{2}\vert < m^{+}_{5i} (m^{'i}_1 T e^{-\alpha^{'i}_1 T} + m_4 Te^ {-T}) +  m^+_{5i} e^{-\vert \mathcal{N}^1_{il} \vert T},
\end{equation*}
where $m^+_{5i} = \vert p^+_{il}(0) -\frac{1}{2} \vert$. Regarding the negative species $P^-_{il}$, we have $\vert p^-_{il}(T) - 0 \vert = \vert p^-_{il}(T) - p^-_{il}(0) \vert < m^-_{5i} m^{'i}_1 T e^{-\alpha^{'i}_1 T}$, where $m^-_{5i} = \vert p^-_{il}(0) -\frac{1}{2} \vert$. A similar argument shows that, in the case of $\mathcal{N}^1_{il} < 0$, the error upper bounds of two species concentration are switched, i.e., $\vert p^+_{il}(T) - 0 \vert < m^+_{5i}m^{'i}_2Te^{-\alpha^{'i}_2 T}, \vert p^-_{il}(T) - \frac{1}{2} \vert < m^{-}_{5i} (m^{'i}_2 T e^{-\alpha^{'i}_2 T} + m_4 Te^ {-T}) +  m^{-}_{5i} e^{-n^-_{il}T}$. 

In phase $\mathcal{O}_{13}$, Proposition 2 in \cite{fan2023automatic} gives the current error of $\{P^{\pm}_{il}\}$, that is, $\vert p^+_{il}(t) - \bar{p}^+_{il} \vert < M^{il}_{p} e ^{-t}, \vert p^-_{il}(t) - \bar{p}^-_{il} \vert < M^{il}_{n} e ^{-t}$. The initial stage of accumulative error involves adjusting $M^{il}_p, M^{il}_n$ and the new parameters having the updated range
\begin{gather*}
\tilde{m}^{+}_{il} > (\vert \mathcal{N}^1_{il} \vert +m_4e^{-T})\times I_1  + m_{j}^{'i}e^{-\alpha^{'i}T},\\
\tilde{m}^{-}_{il} > (\vert \mathcal{N}^1_{il} \vert +m_4e^{-T})\times I_2  + m_{j}^{'i}e^{-\alpha^{'i}T},
\end{gather*}
where if $\mathcal{N}^1_{il} > 0$, $I_1=1, I_2 =0, j=1$, otherwise, $I_1=0, I_2=1, j=2$. To derive the accumulative error for LSSs when $\mathcal{N}^1_{il} > 0$, we make the binary continuous function $h_p(p^+_{il}(T),n^+_{il}(2T)) $ represent $\bar{p}^+_{il}$ in $\mathcal{M}^f_{n^3_1}$ and use $(p^{+}_{il_0}, n^{+}_{il_0})$ to denote the standard value in $\mathcal{M}^f_{n^2_1}$. Utilizing the differential mean value theorem, we have an estimation
\begin{gather}
\label{eq:prop2_5}
    \begin{split}
 & \vert h_p(p^+_{il}(T), n^+_{il}(2T)) -  h_p(p^{+}_{il_0}, n^{+}_{il_0})   \vert \leq \vert h_p(p^+_{il}(T), n^+_{il}(2T)) \\
 & - h_p(p^+_{il}(T),n^{+}_{il_0}) \vert 
 + \vert h_p(p^+_{il}(T), n^{+}_{il_0}) -h_p(p^{+}_{il_0}, n^{+}_{il_0}) \vert\\
 & \leq \vert \frac{\partial h_p}{\partial n^+_{il}(2T)}  \vert \vert n^+_{il}(2T) - n^{+}_{il_0} \vert + \vert \frac{\partial h_p}{\partial p^+_{il}(T)} \vert \vert p^+_{il}(T) - p^{+}_{il_0} \vert.
    \end{split}
\end{gather}
Since
\begin{gather*}
    \begin{split}
\vert \frac{\partial h_p}{\partial n^+_{il}(2T)} \vert \leq \frac{1}{4}, \vert \frac{\partial h_p}{\partial p^+_{il}(T)} \vert \leq \frac{1}{\vert 4 p^+_{il}(T) (1-p^+_{il}(T)) \vert} \triangleq m^{+}_{6i}
    \end{split}
\end{gather*} 
for $\forall (p^+_{il}(T), n^+_{il}(2T))$, we have the total error upper bound  
\begin{gather*}
    \begin{split}
\vert  p^+_{il}(2T) &- \Upsilon^1_{il}  \vert < (\tilde{m}^{+}_{il} +\frac{m_4}{4}) e^{-T} + \frac{m^{'i}_1}{4} e^{-\alpha^{'i}_1T} \\ & + m^+_{6i}m^+_{5i}(m^{'i}_1 Te^{-\alpha^{'i}_1 T} + m_4 Te^ {-T}+ e^{-\vert \mathcal{N}^1_{il} \vert T}).
    \end{split}    
\end{gather*}
Furthermore, from the analysis above, for $\forall \delta \in (0, \frac{1}{3})$, $\exists$ $T>0$ large enough such that $p^+_{il}(T) \in B(\frac{1}{2}, \delta)$, leading to $ m^+_{6i} \in B(1, \frac{4 \delta^2}{1-4 \delta^2})$ near one. Then, the total error estimation of species $N^+_{il}$ is $\vert n^+_{il}(3T) - 0 \vert = \vert n^+_{il}(2T) \vert e^{-T} < m^{'i}_1 e^{(-\alpha^{'i}_{1}-1) T} + m_4 e^{-2T} + \vert \mathcal{N}^1_{il} \vert e^{-T}$. Actually, reactions involving species $\{N^-_{il}, P^-_{il}\}$ occur at a very slow speed because the accumulative error causes a nonzero initial concentration of catalyst $N^-_{il}$ in $\mathcal{M}^f_{n^3_1}$. Hence, the distance from $p^-_{il}(t)$ to the ideal trajectory is measured as follows
\begin{gather*}
\begin{split}
 \vert p^-_{il}(2T) -p^{-\ast}_{il}(2T) \vert  \leq \vert \frac{\partial \phi}{\partial n^-_{il}(2T)} \vert \vert n^-_{il}(2T)\vert \leq \frac{m^{'i}_1}{4} e^{-\alpha^{'i}_1 T}, 
\end{split}
\end{gather*}
where $\phi \triangleq \phi (p^-_{il}(T), n^-_{il}(2T), t)$ denotes the flow of the reaction subsystem composed of $\{N^-_{il}, P^-_{il}\}$ in $\mathcal{M}^f_{n^3_1}$ from $(p^-_{il}(T), n^-_{il}(2T))$. Since $\vert p^{-\ast}_{il}(2T) - 0 \vert \leq \vert p^{-\ast}_{il}(2T) - p^-_{il}(T)\vert + \vert p^-_{il}(T) - p^-_{il}(0) \vert < 0 + m^-_{5i}m^{'i}_1Te^{-\alpha^{'i}_1 T} $, where the zero item owing to $n^{-\ast}_{il}(2T) = 0$, the total error becomes 
\begin{equation}
\label{eq:prop2_8}
 \vert p^-_{il}(2T) - 0 \vert \leq \frac{m^{'i}_1}{4} e^{-\alpha^{'i}_1 T} + m^-_{5i}m^{'i}_1Te^{-\alpha^{'i}_1 T}.
\end{equation} 
In addition, $\vert n^-_{il}(3T) - 0 \vert < m^{'i}_1 e ^{-(\alpha^{'i}_1+1) T}$. In this way, we can make a similar but switched conclusion on error upper bounds of positive and negative species when $\mathcal{N}^1_{il} < 0$. 

To deduce the final deviation on species $P^l_i$ designed for obtaining $\Upsilon^1_{il}$, we found that only the current error upper bound of $P^+_{il}$ depends on transient time  while $T$ determines the other accumulative items, then if $\mathcal{N}^1_{il} > 0$ one can have
\begin{gather*}
\begin{split}
\vert  p_{il}(\tilde{t}) - \Upsilon^1_{il} \vert &  \leq e^{-\tilde{t}} ( \int_{0}^{\tilde{t}} \vert p^+_{il}(\tau)-\bar{p}^+_{il} \vert e^{\tau} \mathrm{d}\tau + \int_{0}^{\tilde{t}} \vert p^-_{il}(\tau) \vert e^{\tau} \mathrm{d}\tau ) \\ 
& ~~~ + e^{-\tilde{t}} \vert p^l_i(0)- \bar{p}^+_{il} \vert + \vert \bar{p}^+_{il} - \Upsilon^1_{il}  \vert.
\end{split}
\end{gather*}
Here, $\tilde{t} \in \left[0,T\right]$ and the initial values of $p^{\pm}_{il}(\tilde{t})$ refers to the terminal value in $\mathcal{M}^f_{n^2_1}$ with a slight abuse of notations. Then, substituting the current error, \eqref{eq:prop2_5} and \eqref{eq:prop2_8}, and retaining the leading items, 
we find an appropriate parameter set $\{a^s_{qi}\}^6_{q=4}$ such that the error estimation become 
\begin{gather*}
\label{eq:prop2_10}
\begin{split}
\vert p_{il}(T) -\Upsilon^1_{il} \vert  < a^s_{4i}T e^{-T} +a^s_{5i}Te^{-\alpha^{'i}_{j} T} + a^s_{6i} e^{-\vert \mathcal{N}^1_{il} \vert T}
\end{split}
\end{gather*} 
with $i=1,2$. Note that the estimation above integrates two cases of the error upper bounds. Specifically, $s = +, j=1$ if $\mathcal{N}^1_{il} > 0$ and $s = -, j=2$ in another case. 
\end{proof}

\subsubsection{Error in the Output Layer} For the second layer in BFCNN and $ l \in \{1,\cdots,\tilde{p}\}$, we utilize $\tilde{n}^{\pm}(t) \in \mathbb{R}^{1 \times \tilde{p}}_{\geq 0}$, $y(t)$, $y^{\pm}(t) \in \mathbb{R}^{1 \times \tilde{p}}_{\geq 0}$, to denote concentration vectors of species $\{N^{\pm}_{3l}\}$, global output species $\{Y^l\}$, and output species pairs $\{Y^{\pm}_l\}$, respectively, and introduce the matrix variable $p_a(t) = ((p_1(t))^{\top}, (p_2(t))^{\top}, \mathds{1})^{\top}$ for the subsequent convenience. The unified structure of the feedforward part causes errors in every layer possessing the same composition that can be derived following the workflow as before. Here, we display the error estimation for the output layer without illustration.
\begin{corollary}\label{coro_1}
Suppose that $\tilde{\mathcal{N}}^1_+, \tilde{\mathcal{N}}^1_-$ represent the standard positive and negative part of $\tilde{\mathcal{N}}^1$, and $s$ pertains to the sign marker. Then, for $\forall l \in \{1,\cdots,\tilde{p}\}$, there exist $m^{\pm}_{7}, a_{q}, \tilde{e}_{\tilde{q}}>0$ with $q \in \{4,5,6\}$ and $\tilde{q} \in \{1,...,5\}$ and powers $\alpha, \alpha_3, n^1 > 0$ for the following estimation
\begin{gather*}
    \begin{split}
\Vert \tilde{n}^{\pm}(T) - \tilde{\mathcal{N}}^1_{\pm} \Vert & \leq  m^{\pm}_{7} (a_4 Te^{-T} + a_5Te^{-\alpha T} + a_6 e^ {-n^1 T}), \\
\vert y_l(T)-y^1_l \vert & < \tilde{e}_1 Te^{-\alpha^l_3 T} + \tilde{e}_2 T^2e^{-T} + \tilde{e}_3 T^2 e^{-\alpha T} \\
& ~~~ + \tilde{e}_4 Te^{-n^1 T} + \tilde{e}_5e^{-\vert \tilde{\mathcal{N}^1_{l}} \vert T},
    \end{split}
\end{gather*}
where $ a_4 = \mathop{\rm{max}}_{s,i,l} \{ a^s_{4i}\}+\mathop{\rm{max}}_{s} \{\frac{\Vert w^{s}_{2}(0)\{p_a\}_{\mathcal{O}_{13}} - \tilde{n}^{s}(0) \Vert}{m^s_{7}}\}$, $m^{\pm}_{7} = 2 \Vert w^{\pm}_{2}(0) \Vert$, $\{a_5, \alpha\} = \mathop{\arg\max}\limits_{s,i,j,l}\{a^{s}_{5i}Te^{-\alpha^{'i}_j T}\}$, $\{a_6, n^1\} = \mathop{\arg\max}\limits_{s,i,l} \{a^s_{6i}Te^{-\vert \mathcal{N}^1_{il} \vert T}\}$.
\end{corollary}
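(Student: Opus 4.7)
The plan is to replay, layer by layer, the derivation used for the hidden layer: first apply a lwsBCRN analysis in direct analogy with Lemma 1 to bound $\|\tilde{n}^{\pm}(T) - \tilde{\mathcal{N}}^1_{\pm}\|$, then cascade through the three output-layer sigBCRN sub-modules $\mathcal{M}^f_{n^j_2}$ ($j=1,2,3$) using the analogues of Lemma \ref{prop_O9}, Lemma \ref{le:feasible_T} and Proposition \ref{prop-psp}. The only structural difference from the first layer is that the upstream signal feeding the second lwsBCRN is $p_a(T)$ rather than $s_a(T)$, so its error is no longer the single $m_1 e^{-T}$ coming from Proposition \ref{prop:assign_1} but the full three-term expression already established in Proposition \ref{prop-psp}. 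Consequently every pure $e^{-T}$ that appeared in the hidden-layer derivation is replaced by the triple $Te^{-T}$, $Te^{-\alpha T}$, $e^{-n^1 T}$, and the subsequent sigmoid dynamics inflate some of these by an additional factor of $T$, producing the characteristic $T^2$ factors appearing in the claim.

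For the first inequality, the dynamics $\dot{\tilde{n}}^{\pm}(t) = w_2^{\pm}(t)\, p_a(t) - \tilde{n}^{\pm}(t)$ yield a current error $\|\tilde{n}^{\pm}(0) - w_2^{\pm}(0)\, p_a(T)\| e^{-T}$ and an LSS gap $\|\bar{\tilde{n}}^{\pm} - \tilde{\mathcal{N}}^1_{\pm}\| \leq \|w_2^{\pm}(0)\| \cdot \|p_a(T) - \{p_a\}_{\mathcal{O}_{13}}\|$, exactly as in the proof of Lemma 1. Feeding the bound from Proposition \ref{prop-psp} into both expressions, using $(1+e^{-T}) \leq 2$ to merge them with $m^{\pm}_7 = 2\|w_2^{\pm}(0)\|$, and selecting the dominant exponent--coefficient pairs $(\alpha, a_5)$ and $(n^1, a_6)$ by $\mathop{\arg\max}$ over the indices $s,i,j,l$ delivers the stated bound; the residual pure $e^{-T}$ contribution from the current error is absorbed into the $Te^{-T}$ coefficient $a_4$ via the second $\max$ term in its definition, which is legitimate since all estimates are taken in the asymptotic regime $T \geq 1$.

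For the second inequality, the three-term bound on $\tilde{n}^{\pm}(T)$ is pushed through the three output-layer sigBCRN sub-modules in direct parallel with the proof of Proposition \ref{prop-psp}. The $\mathcal{M}^f_{n^1_2}$ annihilation step (analogue of Lemma \ref{prop_O9}) propagates the error essentially unchanged, with the feasible range of $T$ guaranteed by Lemma \ref{le:feasible_T}. The $\mathcal{M}^f_{n^2_2}$ step contributes a factor of $T$ multiplying the net-input error in the exponent (via the linearization $|e^{-a T}-e^{-b T}| \leq T|a-b|$ used earlier), which converts $Te^{-T}$ and $Te^{-\alpha T}$ into $T^2 e^{-T}$ and $T^2 e^{-\alpha T}$ respectively, and converts $e^{-n^1 T}$ into $Te^{-n^1 T}$. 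The $\mathcal{M}^f_{n^3_2}$ step invokes the differential mean value theorem exactly as in \eqref{eq:prop2_5}, producing the $e^{-|\tilde{\mathcal{N}}^1_l| T}$ term from the ideal convergence speed, together with a fresh current-error term $\tilde{e}_1 Te^{-\alpha_3^l T}$ after the exponential-stability parameter range is re-adjusted to absorb the propagated initial-condition error. The main obstacle is the bookkeeping in this final consolidation: each sign case and each index $(s,i,j,l)$ produces its own coefficient, and one must argue by the leading-term principle that all sub-leading contributions can be absorbed into the five unified coefficients $\tilde{e}_1,\ldots,\tilde{e}_5$, exactly as was done for the hidden layer.
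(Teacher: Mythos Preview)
Your proposal is correct and follows essentially the same approach as the paper's own (sketched) proof: replay the hidden-layer arguments of Lemma~1, Lemma~\ref{prop_O9}, and Proposition~\ref{prop-psp} with $p_a(T)$ in place of $s_a(T)$, so that the upstream error $\{A\}$ from Proposition~\ref{prop-psp} replaces $m_1e^{-T}$ and the extra factor of $T$ picked up in $\mathcal{M}^f_{n^2_2}$ produces the $T^2$ terms. The paper's sketch displays the intermediate bounds $|y^{s_1}_l(2T)-y^1_l|\le \tilde m^{s_1}_y e^{-T}+\tfrac14(m^{s_1}_3 e^{-\alpha^{s_1}_3 T}+m_7\{A\})+m^{s_1}_9 m^{s_1}_8(m^{s_1}_3 Te^{-\alpha^{s_1}_3 T}+m_7\{A\}T+e^{-|\tilde{\mathcal N}^1_l|T})$ and then consolidates, exactly as you describe.
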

\begin{proof}
(Sketch) The dynamical equations are omitted here. The case of linear computation is trivial. Following Proposition \ref{prop-psp}, for the sigBCRN in $\mathcal{M}^f_{n^j_2}$ and $s_1, s_2$ being sign markers, there exist the corresponding parameters $\tilde{m}^{\pm}_{y}$, $m^{\pm}_3$, $\alpha^{\pm}_3$,$m^{\pm}_8$,$m^{\pm}_9 >0$ such that in phase $\mathcal{O}_{21}$, we obtain 
\begin{gather*}
    \begin{split}
\vert & y^{s_1}_{l}(2T) - y^1_l \vert  \leq \tilde{m}^{s_1}_{y} e^{-T} + \frac{1}{4}(m^{s_1}_3 e^{-\alpha^{s_1}_3 T} + m_7 \{A\}) \\
& + m^{s_1}_{9}m^{s_1}_{8}(m^{s_1}_3 T e^{-\alpha^{s_1}_3 T} + m_7\{A\}T + e^{-\vert \tilde{\mathcal{N}}^1_{l} \vert T}),\\
\vert & y^{s_2}_{l}(2T) - 0 \vert \leq \frac{1}{4} m^{s_1}_3e^{-\alpha^{s_1}_3 T} + m^{s_2}_{8} m^{s_1}_3 T e^{-\alpha^{s_1}_3 T},
    \end{split}
\end{gather*}
where $m_7 = m^{+}_7 + m^{-}_7$, $\{A\}=a_4Te^{-T} + a_5Te^{-\alpha T} + a_6e^{-\tilde{n}^1 T}$, and if $\tilde{\mathcal{N}}^1_{l}>0$, $s_1 = +, s_2 =-$, otherwise $s_1 = -, s_2 =+$. Then, we can have the final upper bound.
\end{proof}

Hereafter, we denote the error upper bound of computing hidden-layer outputs $p^1_{il}$ and global outputs $y^1_l$ as $\{A\}$ and $\{E\}$, respectively, for ease of subsequent expression. 
\subsection{Preceding Calculation Module Error}
The pre-calculation BCRN (pBCRN), solely occupying $\mathcal{O}_{23}$, comprises boundary equilibrium reactions $\mathcal{O}^1_{23}$ and type-\Rmnum{1} CRNs $\mathcal{O}^2_{23}$, $\mathcal{O}^3_{23}$ \cite{fan2023automatic}. Recall that for $i=\{1,2\}$, $y_e(t), y_s(t), \tilde{y}(t), i_y(t) \in \mathbb{R}^{1 \times \tilde{p}}$ and $p_{is}(t), \tilde{p}_i(t), i_{p_i}(t) \in \mathbb{R}^{1 \times \tilde{p}}$ indicates concentration vectors of
$\{Y^l_e, Y^l_s, \tilde{Y}^l, I^l_y\}$ of MAS $\mathscr{M}^1$, and $\{P^l_{is},\tilde{P}^l_{i}, I^l_{p_i}\}$ of $\mathscr{M}^2_{i}$, respectively, in $\mathcal{O}^1_{23}$ with equations 
\begin{gather*}
\begin{split}
\mathscr{M}^1:~\dot{y}_l(t) & = - k y_l(t), ~\dot{\tilde{y}}_l(t) =  k y_l(t),\\
    \dot{y}^l_e(t) &= k y_l(t) - k y^l_e(t)s^l_3(t),~\dot{s}^l_3(t)=-k y^l_e(t) s^l_3(t),\\
    \dot{y}^l_{s}(t) &= k y_l(t)-ky^l_s(t)i^l_y(t),~\dot{i}^l_y(t)=-k y^l_s(t)i^l_y(t).~~\\
\mathscr{M}^2_{i}:~\dot{p}_{il}(t) & = - k p_{il}(t), ~ \dot{p}^l_{is}(t) = k p_{il}(t) - k p^l_{is}(t)i^l_{p_i}(t), \\
~ \dot{\tilde{p}}_{il}(t) &= k p_{il}(t),~\dot{i}^l_{p_i}(t)=-k p^l_{is}(t)i^l_{p_i}(t).~~
\end{split}
\end{gather*}
Then, $e^{\pm}(t)$, $s^l_{y}(t)$, $s^l_{p_i}(t)$, $e(t) \in \mathbb{R}^{1 \times \tilde{p}}$ separately denote the concentration vectors of species $\{E^{\pm}_l\}$, $\{S^l_y\}$, $\{S^l_{p_i}\}$, $\{E_l\}$ in $\mathcal{O}^2_{23}$, $\mathcal{O}^3_{23}$ with dynamical equations according to type-\Rmnum{1} CRN 
\begin{gather*}
\begin{split}
\mathcal{O}^2_{23}:~\dot{e}^{+}_l(t) & = s^l_3(t)-e^{+}_l(t), ~\dot{e}^{-}_l(t) = y^l_e(t)-e^{-}_l(t),\\
\dot{s}^{l}_y(t) & = i^l_y(t)-s^l_y(t), ~\dot{s}^{l}_{p_i}(t) = i^l_{p_i}(t)-s^l_{p_i}(t),\\
\mathcal{O}^3_{23}:~\dot{e}_l(t)&=e^+_l(t)+e^-_l(t)-e_l(t).
\end{split}
\end{gather*}
Part \Rmnum{1} elaborates that there exist $M_x, M_y > 0$ and exponential convergence orders $K, \tilde{K}>0$ for any fixed $T$ of $\mathscr{M}^1, \mathscr{M}^2$ with boundary LSSs, which
facilitate to estimate the current error of $\mathcal{O}^1_{23}$. For the other two, following the derivation in Lemma 2 of Part \Rmnum{1}, the concrete current errors are combined into two scenarios, i.e., if $\tilde{K}, K^i \neq 1$, we have 
\begin{equation}
\begin{split}
\Vert x^l_e(T) - \bar{x}^l_e \Vert & < n_x(e^{-\tilde{K}T}-e^{-T}) + g_xe^{-T}, \\
\vert s^l_{p_i}(T) - \bar{s}^l_{p_i} \vert & < n^i_y(e^{-K^iT} - e^{-T}) + g_{p_i}e^{-T}, \\
\vert e_l(T) - \bar{e}_l \vert  < 2\tilde{n}_x e^{-\tilde{K}T} & +(g^+ + g^- -2n_x ) Te^{-T} + g_e e^{-T},
\end{split}
\label{eq:pre_1}
\end{equation}
where $x^l_e(t)=(e^+_l(t),e^-_l(t),s^l_y(t))^{\top}$, $n_x = \frac{M_x}{1-\tilde{K}}$, $n^i_y = \frac{M^i_y}{1-K^i}$, $\tilde{n}_x = \frac{M_x}{(1-\tilde{K})^2}$, $g^{\pm} = \vert e^{\pm}_l(0) - \bar{e}^{\pm}_l\vert$, $g_{p_i}=\vert s^l_{pi}(0) - \bar{s}^l_{pi} \vert$, $g_e = \vert e_l(0) - \bar{e}_l \vert$, $g_x = \max\{g^{\pm}, \vert s^l_y(0) - \bar{s}^l_y \vert \}$ and if $\tilde{K}, K^1, K^2 = 1$, they become 
\begin{equation}
    \begin{split}
\Vert x^l_e(T) - \bar{x}^l_e \Vert & < M_x T e^{-T} + g_x e^{-T}, \\
\vert s^l_{p_i}(T) - \bar{s}^l_{p_i} \vert & < M^i_y Te^{-T} + g_{p_i} e^{-T},\\
\vert e_l(T) -\bar{e}_l \vert < M_x T^2e^{-T}& + (g^+ + g^-) Te^{-T}+ g_e e^{-T}.
    \end{split}
\label{eq:pre_2}
\end{equation}
Now, we discuss the total error bound of every type of species. The error propagation schematic is shown in Fig.\ref{fig:7}.
\begin{figure}[!t]
\centerline{\includegraphics[width=\columnwidth]{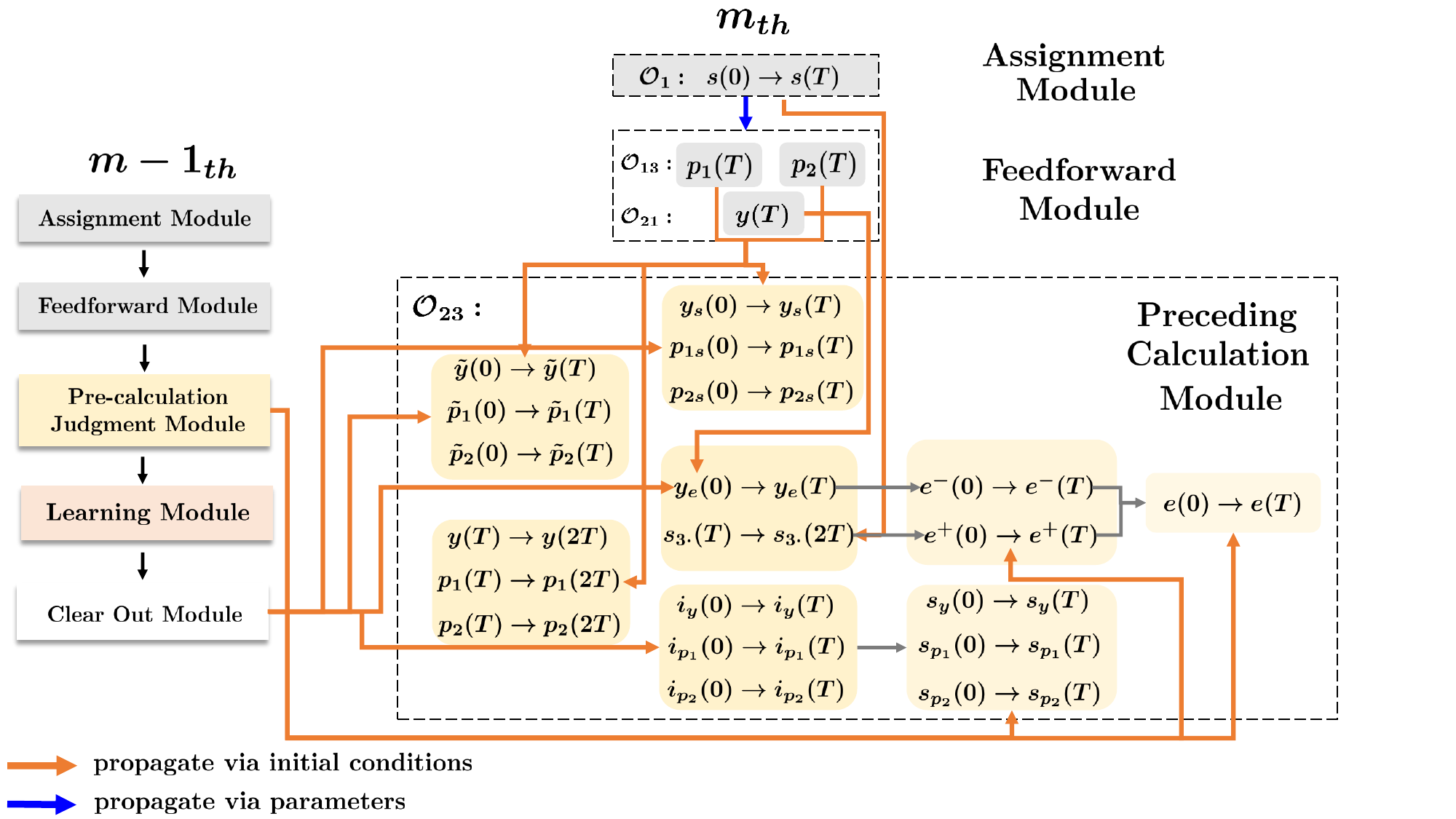}}
\caption{Schema of the propagation of error via initial concentrations and parameters from the $m-1$th to the $m$th iteration in the Preceding Calculations module.}
\label{fig:7}
\end{figure}
Firstly, since no errors accumulated in the LSSs of species $\{Y^l, P^l_i\}$ participating in pBCRN, and the analytic solutions are available, we have
\begin{equation*}
    \begin{split}
\vert y_l(2T) - 0 \vert & =\vert y_l(T)\vert e^{-kT} < y^1_l e^{-kT} + \{E\} e^{-kT},\\
\vert p_{il}(2T) - 0 \vert & =\vert p_{il}(T)\vert e^{-kT} < \Upsilon^1_{il} e^{-kT} + \{A\} e^{-kT},
    \end{split}
\end{equation*}
where $k$ is the rate constant in $\mathcal{O}^1_{23}$. Then, in our design, the equilibrium concentration of species $\{\tilde{Y}^l, \tilde{P}^l\}$ are used to replace that of the original output species $\{Y^l, P^l_i\}$ for further update computation. Thus, their deviation from the standard value is given closer attention below. 
\begin{lemma}
Suppose that the rate constant is taken as $k > \mathop{\rm{max}}\{1, \tilde{n}^1, \alpha, \vert \mathcal{N}^1_l \vert, \alpha_3\}$, the errors in obtaining $ y^1_l$, $\Upsilon^1_{il}$ caused by $\{\tilde{Y}^l, \tilde{P}^l_i\}$ are of the same order as those resulting from species $\{Y^l, P^l_i\}$. 
\end{lemma}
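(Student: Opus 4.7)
My plan is to integrate the linear ODEs in $\mathcal{O}^1_{23}$ governing $\tilde{y}_l$ and $\tilde{p}_{il}$ explicitly, then compare the resulting endpoint values to the standard outputs $y^1_l$ and $\Upsilon^1_{il}$, using Corollary~\ref{coro_1} and Proposition~\ref{prop-psp} for the errors inherited from the feedforward module. The rate constant $k$ is the only design knob, so the goal is to choose $k$ large enough that the extra decay introduced by integrating through $\tilde{Y}^l,\tilde{P}^l_i$ is absorbed into the existing feedforward bounds.

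First, I would solve $\dot{y}_l=-ky_l,\ \dot{\tilde{y}}_l=ky_l$ on $[0,T]$. Taking as initial condition the endpoint of the feedforward phase, so $|y_l(0)-y^1_l|<\{E\}$, and assuming the clear-out module enforces $\tilde{y}_l(0)=0$, the closed form is $\tilde{y}_l(T)=y_l(0)(1-e^{-kT})$. The identical calculation gives $\tilde{p}_{il}(T)=p_{il}(0)(1-e^{-kT})$ with $|p_{il}(0)-\Upsilon^1_{il}|<\{A\}$. A triangle-inequality split then yields
\[
|\tilde{y}_l(T)-y^1_l|\leq |y_l(0)-y^1_l|+|y_l(0)|e^{-kT}\leq \{E\}+(y^1_l+\{E\})e^{-kT},
\]
and analogously $|\tilde{p}_{il}(T)-\Upsilon^1_{il}|\leq \{A\}+(\Upsilon^1_{il}+\{A\})e^{-kT}$. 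The first summand is already of the prescribed order; the work consists in showing that the trailing $e^{-kT}$ is dominated by $\{E\}$ and $\{A\}$.

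The comparison proceeds term by term. The building blocks of $\{E\}$ and $\{A\}$ are products of polynomials in $T$ with exponentials of the form $e^{-T},\ e^{-\alpha T},\ e^{-\alpha^l_3 T},\ e^{-n^1 T},\ e^{-|\tilde{\mathcal{N}}^1_l|T}$, and $e^{-|\mathcal{N}^1_{il}|T}$. Since $T^j e^{-\beta T}/e^{-kT}=T^j e^{(k-\beta)T}\to\infty$ whenever $k>\beta$, the hypothesis $k>\max\{1,\tilde{n}^1,\alpha,|\mathcal{N}^1_l|,\alpha_3\}$ ensures that, for sufficiently large $T$, $e^{-kT}$ is pointwise dominated by each leading term in $\{E\}$ and $\{A\}$. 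Folding the resulting constants into the existing coefficients preserves the order of the bound, which is exactly the claim.

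I expect the principal obstacle to be the bookkeeping rather than any analytic subtlety: I need to verify that the list of rates $\{1,\tilde{n}^1,\alpha,|\mathcal{N}^1_l|,\alpha_3\}$ really is an upper bound for every exponential rate that appears in both $\{E\}$ and $\{A\}$, reconciling the notation $|\mathcal{N}^1_l|$ in the hypothesis with $|\tilde{\mathcal{N}}^1_l|$ appearing in Corollary~\ref{coro_1}, and accounting for the polynomial prefactors by strict inequality. A secondary care point is the assumption $\tilde{y}_l(0)=\tilde{p}_{il}(0)=0$: if the clear-out module only zeroes these catalysts up to a small residual, that residual must be carried through the same triangle inequality, where it contributes an extra term bounded by previous-iteration accumulative errors and will thus not alter the order either.
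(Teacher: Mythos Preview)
Your proposal is correct and follows essentially the same route as the paper: you solve the linear ODEs explicitly to get $\tilde{y}_l(T)=y_l(0)(1-e^{-kT})$, split via the triangle inequality into the inherited feedforward error $\{E\}$ plus a residual $(y^1_l+\{E\})e^{-kT}$, and then absorb the residual by the hypothesis on $k$; the paper does the identical decomposition (phrasing it as ``current error'' plus ``accumulative error'') and arrives at the same bound $y^1_l e^{-kT}+\{E\}e^{-kT}+\{E\}$. Your caveats about the notation $|\mathcal{N}^1_l|$ versus $|\tilde{\mathcal{N}}^1_l|$ and the initial value $\tilde{y}_l(0)=0$ are well-spotted bookkeeping points (the paper dispatches the latter by invoking $m=1$), but they do not affect the argument.
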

\begin{proof}
Their current errors are equal to $\vert y_l(T) \vert e^{-kT}$ and $\vert p^l_i(T) \vert e^{-kT}$, respectively. Fig.\ref{fig:7} displays that the accumulative error of $\tilde{Y}^l$ on LSSs is caused by the initial concentration $\tilde{y}_l(0)$ and the vertical propagation from $y_l(T)$, i.e.,
\begin{equation*}
\vert \bar{\tilde{y}}_l - y^1_l \vert \leq \vert \bar{\tilde{y}}_l - y_l(T) \vert + \vert y_l(T) - y^1_l \vert = \vert \tilde{y}_l(0) \vert + \vert y_l(T) - y^1_l \vert.
\end{equation*}
Note that $m=1$, so only the latter part needs to be considered, and the total error upper bound ends up being
\begin{equation*}
\vert \tilde{y}_l(T) - y^1_l \vert \leq y^1_l e^{-kT} + \{E\} e^{-kT} + \{E\}. 
\end{equation*}
Likewise, we have the error estimation of species $\tilde{P}^l_i$ 
\begin{equation*}
\vert \tilde{p}_{il}(T) -\Upsilon^1_{il} \vert \leq \Upsilon^1_{il} e^{-kT} + \{A\} e^{-kT} + \{A\}.
\end{equation*}   
Furthermore, all entries can be consolidated into $\{E\}, \{A\}$ since $k$ is greater than the powers within them.
\end{proof}

As for other species in $\mathcal{O}^1_{23}$, the estimates are as follows.
\begin{lemma} \label{lem_bound2}
For $l \in \{1,\cdots,\tilde{p}\}$ and $i=\{1,2\}$, let $s^l_3, y^l_e$ denote standard values computed by species $\{S^l_3, Y^l_e\}$ in $\mathcal{O}^1_{23}$, then the error upper bounds of species $\{S^l_3, Y^l_e\}$ and $\{Y^l_s, I^l_{y}\}$, $\{P^l_{is}, I^l_{p_i}\}$ become
\begin{gather*}
\begin{split}
\vert y^l_s (T) -0\vert  < M_x e^{-\tilde{K}T} &, ~ \vert p^l_{is} (T) -0\vert < M^i_y e^{-K^i T},\\
\vert i^l_y(T) - (1-y^1_l) \vert & <  M_x e^{-\tilde{K}T} + \{E\},\\
\vert i^l_{p_{i}}(T) - (1-\Upsilon^1_{il}) \vert & < M^i_y e^{-K^i T} + \{A\}, \\
\vert y^l_e(T) - y^l_e \vert  <M_x e^{-\tilde{K}T} & + \mathbb{I}_{\{(y_l(T) > s^l_3(T)\}}(m_1e^{-T} + \{E\}), \\
\vert s^l_3(2T) - s^l_3 \vert  <  M_x e^{-\tilde{K}T} & + \mathbb{I}_{\{(y_l(T) < s^l_3(T)\}}(m_1e^{-T} + \{E\}),
\end{split}    
\end{gather*}
where $\mathbb{I}$ is an indicator function.
\end{lemma}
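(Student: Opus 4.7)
The plan is to bound each species' deviation at the end of phase $\mathcal{O}^1_{23}$ by the standard decomposition
\begin{equation*}
\vert x(T) - x \vert \;\leq\; \vert x(T) - \bar{x} \vert \;+\; \vert \bar{x} - x \vert,
\end{equation*}
where the first summand is the \emph{current error} controlled by the exponential convergence of $\mathscr{M}^1, \mathscr{M}^2_i$ from Part~I (giving the $M_xe^{-\tilde{K}T}$ and $M^i_y e^{-K^iT}$ terms), and the second summand is the \emph{accumulative error} coming from the vertical propagation of the feedforward output errors $\{E\}$ on $y_l(T)$ and $\{A\}$ on $p_{il}(T)$, together with the assignment-module error $m_1 e^{-T}$ on $s^l_3(T)$ from Proposition~\ref{prop:assign_1}.

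First I would dispose of the two species whose theoretical LSS is $0$ regardless of upstream inputs. In the reaction pairs $(Y^l_s, I^l_y)$ and $(P^l_{is}, I^l_{p_i})$, the boundary equilibrium structure forces $\bar{y}^l_s=\bar{p}^l_{is}=0$ as long as the conserved positive imbalance assumed in the design is preserved. Hence the accumulative component vanishes for these two species and the claimed bounds $M_xe^{-\tilde{K}T}$ and $M^i_ye^{-K^iT}$ are exactly the Part~I current error estimates. Next I would treat $I^l_y$ and $I^l_{p_i}$: here the LSS is the linear function $\bar{i}^l_y = i^l_y(0)+y^l_s(0)-y_l(T)$ (and analogously for $I^l_{p_i}$), with the designed reference $1 - y^1_l$. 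Subtracting the reference and using $\vert y_l(T)-y^1_l \vert < \{E\}$ (resp.\ $\vert p_{il}(T) - \Upsilon^1_{il}\vert < \{A\}$) gives the accumulative contribution, which combined with the current error yields the stated bounds.

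The last and technically most delicate pair is $(Y^l_e, S^l_3)$. The subsystem $\dot{y}^l_e = k y_l - k y^l_e s^l_3$, $\dot{s}^l_3 = -ky^l_e s^l_3$ admits \emph{two} boundary LSS branches depending on whether $y_l(T)+y^l_e(0) > s^l_3(T)$ or the reverse: in the first branch $\bar{s}^l_3=0$ and $\bar{y}^l_e = y_l(T)+y^l_e(0)-s^l_3(T)$, in the second $\bar{y}^l_e=0$ and $\bar{s}^l_3 = s^l_3(T)+s^l_3(0)-y_l(T)-y^l_e(0)$. I would therefore compare the active branch with the reference value $y^l_e$ (determined by the same sign test applied to the exact $y^1_l, s^l_3$). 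On the \emph{active} branch, the accumulative error is just the sum of propagated errors on $y_l(T)$ ($< \{E\}$) and on $s^l_3(T)$ ($< m_1 e^{-T}$), whereas on the \emph{inactive} branch both $\bar{x}$ and the reference are identically $0$, so no upstream error can enter. This mechanism is exactly what the indicator factors $\mathbb{I}_{\{y_l(T) > s^l_3(T)\}}$ and $\mathbb{I}_{\{y_l(T) < s^l_3(T)\}}$ encode, and adding the Part~I current error $M_x e^{-\tilde{K}T}$ produces the two remaining inequalities.

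The main obstacle is the non-smoothness of the boundary LSS as a function of the upstream data: near the crossover $y_l(T)\approx s^l_3(T)$ the branch-selection mechanism is discontinuous, so the indicator-function form of the bound is actually the cleanest estimate one can hope for. To make this rigorous I would invoke Lemma~\ref{le:feasible_T} or a version tailored to this pair, which shows that for $T$ sufficiently large the perturbed and unperturbed dynamics select the same branch; under that assumption the decomposition above is valid and the two error contributions add directly, yielding the stated bounds.
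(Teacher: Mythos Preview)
Your proposal is correct and follows essentially the same approach as the paper: the same current-plus-accumulative decomposition, the same treatment of $Y^l_s, P^l_{is}$ as pure current error (zero LSS), the same linear LSS computation for $I^l_y, I^l_{p_i}$ picking up $\{E\},\{A\}$, and the same two-branch analysis for $(Y^l_e, S^l_3)$ producing the indicator factors. Your explicit mention of a feasibility condition on $T$ to ensure the perturbed and reference dynamics select the same LSS branch is handled by the paper in a remark immediately following the proof rather than inside it, and your formulas for $\bar{i}^l_y$ and $\bar{s}^l_3$ contain minor sign/term slips that are harmless here since $y^l_s(0)=y^l_e(0)=0$ at $m=1$.
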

\begin{proof}
As shown in Fig.\ref{fig:7}, accumulative errors on boundary LSSs of these species come from their initial concentrations as well as vertical propagation from $y_l(T), p_{il}(T)$. For $\{Y^l_s, I^l_y\}$ and $\{P^l_{is}, I^l_{p_i}\}$, we have $y^l_s(0)=p^l_{is}(0)=0$, $i^l_y(0)=i^l_{p_i}(0) = 1$, so $y^l_s(t), p^l_{is}(t) \leq 1$ for $\forall t$ as long as $y_l(T), p_{il}(T) \leq 1$. 
In this case, we have the following formulas for error estimation
\begin{equation*}
  \begin{split}
\vert & y^l_s (T) -0\vert  < M_x e^{-\tilde{K}T}, ~ \vert p^l_{is} (T) -0\vert < M^i_y e^{-K^i T},\\
\vert & i^l_y(T) - i^l_y \vert < M_x e^{-\tilde{K}T} + \vert i^l_y(0) - (y^l_s(0)+y_l(T)) -i^l_y \vert\\
& < M_x e^{-\tilde{K}T} +  \vert i^l_y(0)-1 \vert  + \vert y^l_s(0)  \vert + \vert y_l(T) - y^1_l \vert, \\
\vert & i^l_{p_{i}}(T) - i^l_{p_i} \vert  < M^i_y e^{-K^i T} + \vert i^l_{p_i} (0) - 1 \vert + \vert p^l_{is}(0)  \vert \\
&~~~~~~~~~~~~~~~~~~ + \vert p_{il}(T) - \Upsilon^1_{il} \vert.
  \end{split}
\end{equation*}
Here, $i^l_y = 1-y^1_l$, $i^l_{p_i} = 1- \Upsilon^1_{il}$, and the error item caused by initial values can be eliminated due to $m=1$. For $\{Y^l_e, S^l_3\}$, we have $y^l_e(0)=0$, and the concentration trajectories may fall in attractive regions of different equilibrium points, which depends on $y_l(T), s^l_3(T)$. When $y_l(T) < s^l_3(T)$, 
\begin{equation*}
    \begin{split}
\vert & y^l_e(T) - y^l_e\vert = \vert y^l_e(T) - 0\vert < M_x e^{-\tilde{K}T},\\
\vert & s^l_3(2T) - s^l_3 \vert  < M_x e^{-\tilde{K}T} + \vert s^l_3(T) - (y^l_e(0) + y_l(T))  - s^l_3\vert \\
& < M_x e^{-\tilde{K}T} + \vert s^l_3(T) - d^l \vert + \vert y^l_e(0)\vert + \vert y_l(T)  -y^1_l \vert.  
    \end{split}
\label{eq:y_e-s_31}
\end{equation*}
In another case, $y^l_e=-(d^l-y^1_l)$, then the upper bound regarding two species has been switched as
\begin{equation*}
\label{eq:y_e-s_32}
    \begin{split}
\vert & y^l_e(T) - y^l_e\vert  < M_x e^{-\tilde{K}T} + \vert y^l_e(0) + y_l(T) - s^l_3(T) -y^l_e \vert \\
& < M_x e^{-\tilde{K}T} + \vert y^l_e(0) \vert + \vert s^l_3(T) - d^l \vert 
+ \vert y_l(T) - y^1_l \vert, \\
\vert & s^l_3(T) - s^l_3\vert = \vert s^l_3(T) - 0\vert < M_x e^{-\tilde{K}T}.
    \end{split}
\end{equation*}    
\end{proof}

\begin{remark}
Mild assumptions should be imposed on the fixed $T$ to avoid the radius of deviation of actual concentrations from standard values leading to $y^l_e(0)+y^1_l(T) > s^l_3(T)$ while $y^1_l < d^l $, or $y^l_e(0) + y^1_l(T) < s^l_3(T)$ while $y^1_l > d^l$, and causing $y_l(T)+y^l_s(0),~ p_{il}(T)+p^l_{is}(0) > 1$, which implies that errors alter the original attractive regions. Here, $T$ is assumed to be taken in the implicit feasible range 
\begin{equation*}
\{E\}+m_1e^{-T} < \vert d^l-y^1_l \vert, ~~ \{E\} < 1-y^1_l,
\end{equation*}
if $m=1$. It suggests that $T$ must possess an additional lower bound, similar to the one presented in Lemma \ref{le:feasible_T}. 
\end{remark}

Regarding the type-\Rmnum{1} CRN in $\mathcal{O}^2_{23}$, errors caused by $\{S^l_y, S^l_{p_i}\}$ depends on $K^1, K^2, \tilde{K}$, i.e., the upper bound is 
\begin{equation*}
    \begin{split}
\vert s^l_y(T) - (1-y^1_l) \vert & < \vert s^l_y(T) - \bar{s}^l_y \vert + \vert \bar{i}_y - i^l_y \vert \\
 & < \mathbb{I}_{\{\tilde{K} \neq 1\}}n_x (e^{-\tilde{K}T}-e^{-T})+ g_y e^{-T} \\
& ~~~ + \mathbb{I}_{\{\tilde{K} = 1\}}M_x T e^{-T} + \{E\},\\
 \vert s^l_{p_i}(T) - (1-\Upsilon^1_{il}) \vert
 & < \vert s^l_{p_i}(T) - \bar{s}^l_{p_i} \vert + \vert \bar{i}^l_{p_i} - i^l_{p_i} \vert \\
 & < \mathbb{I}_{\{K^i \neq 1\}} n^i_y (e^{-K^iT}-e^{-T})+ g_{p_i}e^{-T} \\
 & ~~~ +\mathbb{I}_{\{K^i = 1\}}M^i_y Te^{-T}  + \{A\}.
    \end{split}
\end{equation*}
Note that $\bar{s}^l_y = \bar{i}^l_y, \bar{s}^l_{p_i}=i^l_{p_i}$, $g_y = \vert s^l_y(0) -i^l_y \vert + \{E\}$. As for error species pairs $\{E^+_l, E^-_l\}$, the upper bound should be estimated in $2 \times 2$ cases, as there are distinct scenarios for current and accumulative errors. To elaborate, setting $e^+_l, e^-_l$ be the standard value and combining \eqref{eq:pre_1} and \eqref{eq:pre_2}, we derive 
\begin{equation*}
\begin{split}
\vert  e^+_l(T) -e^+_l \vert & <  \mathbb{I}_{\{\tilde{K} = 1\}}M_x T e^{-T} + \mathbb{I}_{\{\tilde{K} \neq 1\}}n_x (e^{-\tilde{K}T}-e^{-T})\\
& ~~~ + g^+ e^{-T} + \mathbb{I}_{\{y^1_l < d^l\}}(m_1 e^{-T} + \{E\}),\\
\vert  e^-_l(T) - e^-_l \vert & < \mathbb{I}_{\{\tilde{K} = 1\}}M_x T e^{-T} + \mathbb{I}_{\{\tilde{K} \neq 1\}}n_x (e^{-\tilde{K}T}-e^{-T}) \\
& ~~~ + g^- e^{-T} + \mathbb{I}_{\{y^1_l > d^l\}}(m_1 e^{-T} + \{E\}).
\end{split}    
\end{equation*}
Now, considering $\mathcal{O}^3_{23}$, standard training errors $e^l = d^l-y^1_l$ for all $l$ are finally stored in species $\{E^l\}$ and the realization error is described as
\begin{equation*}
    \begin{split}
        \vert e_l(T) - e^l\vert < \vert e_l(T) - \bar{e}_l \vert + m_1 e^{-T} + \{E\}
    \end{split}.
\end{equation*}
The upper bound holds whatever the sign of $e^l$ and
may differ in current error relying on $\tilde{K}$, as \eqref{eq:pre_1} and \eqref{eq:pre_2}. Notably,
similar to $g_y$, propagation errors from $s^l_3(T), p_{il}(T), y_l(T)$ stored on LSSs make coefficients become $g^+ = \vert e^+_l(0) - s^l_3 \vert + \mathbb{I}_{\{e^l > 0\}} \epsilon_e $, $g^- = \vert e^-_l(0) -y^l_e \vert + \mathbb{I}_{\{e^l < 0\}} \epsilon_e $, $g_{p_i}=\vert s^l_{pi}(0) - (1-\Upsilon^1_{il})) \vert + \{A\}$, and $g_e = \vert e_l(0) - e^l \vert +\epsilon_e$ with $\epsilon_e=m_1e^{-T} + \{E\}$.

Realization errors stemming from the Judgment Module are not treated in isolation within the scope of this paper. The total errors impact the reaction rates in the learning module without explicitly altering their LSSs based on the design \cite{fan2023automatic}. In essence, this influence is akin to that of a chemical oscillator and can be mitigated by adjusting subsequent reaction rate constants and meanwhile, adding clear-out reactions to manually zero some concentrations.

\subsection{Learning Module Error}
\begin{figure}[!t]
\centerline{\includegraphics[width=\columnwidth]{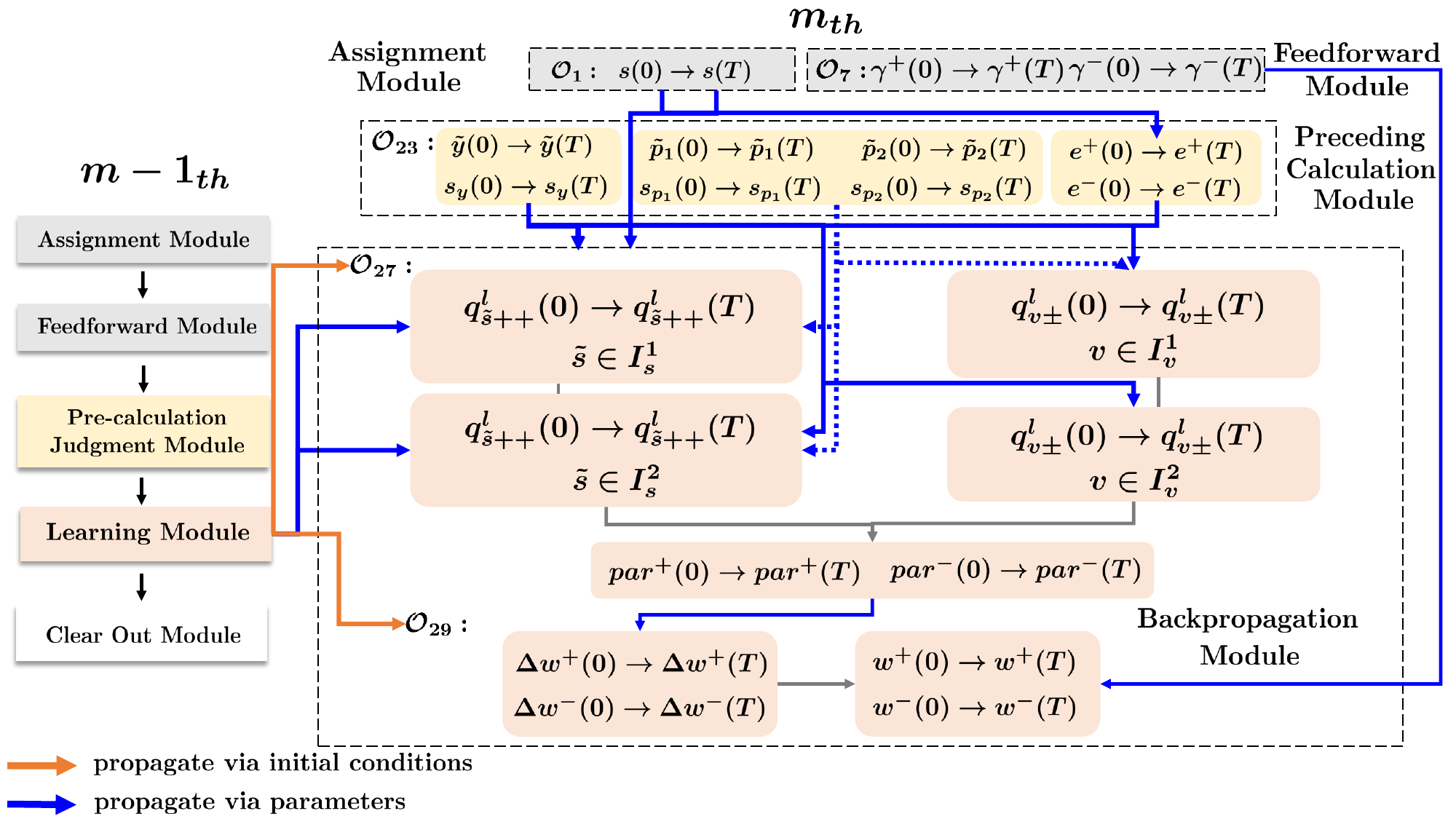}}
\caption{Schema of the propagation of error via initial concentrations and parameters from the $m-1$th to the $m$th iteration in the Learning Module.}
\label{fig:8}
\end{figure}
The learning module comprises the sub-computational module $\mathcal{M}^{le}_1$ designed for computing negative gradients and $\mathcal{M}^{le}_2$ intended for obtaining updated weights, operating during phases $\mathcal{O}_{27}$ and $\mathcal{O}_{29}$, separately. 
The current error and accumulative error in $\mathcal{M}^{le}_1$ are both segmented into the multiplication component (arising from $\mathcal{O}^1_{27}$) and the addition component (arising from $\mathcal{O}^2_{27}$). Notably, as illustrated in Fig.\ref{fig:8}, almost every factor in \eqref{eq:weight_update}
computed by the upstream BCRN modules is inaccurate, resulting in the accumulative error on LSSs of $\mathcal{O}^1_{27}$ through equation parameters. 
Now we present a general upper bound estimation for such multiplication errors.  
\begin{lemma} \label{lem_multi}
Suppose that we have $\vert \bar{x}_i - x_i \vert <\epsilon_i$ for $\forall i \in \{1,2,...,n\}$, where $x_i$ is constant, $\epsilon_i > 0$ denotes a sufficiently small upper bound, and $\bar{x}=\left[\bar{x}_1,...,\bar{x}_n\right]^{\top} \in \Omega$, a bounded set in $\mathbb{R}^n$. Then, there exists a set of $\{h_i\}^n_{i=1} > 0$ such that $\vert \prod^n_{i=1} \bar{x}_i - \prod^n_{i=1}x_i \vert < \sum^n_{i=1} h_i \epsilon_i$.
\end{lemma}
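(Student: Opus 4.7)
The plan is to reduce the product-error bound to a sum of first-factor errors via a telescoping identity. Write the difference as a sum whose $i$-th term is obtained by swapping $x_i$ with $\bar{x}_i$ one coordinate at a time, so that each summand isolates exactly one perturbed coordinate:
\begin{equation*}
\prod_{i=1}^n \bar{x}_i - \prod_{i=1}^n x_i = \sum_{i=1}^n \Bigl(\prod_{j<i} \bar{x}_j\Bigr)(\bar{x}_i - x_i)\Bigl(\prod_{j>i} x_j\Bigr),
\end{equation*}
with the conventions $\prod_{j<1}(\cdot)=\prod_{j>n}(\cdot)=1$. This identity is verified by observing that adjacent terms in the sum cancel in a standard telescoping fashion. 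Applying the triangle inequality and the hypothesis $|\bar{x}_i - x_i| < \epsilon_i$ then yields
\begin{equation*}
\Bigl|\prod_{i=1}^n \bar{x}_i - \prod_{i=1}^n x_i\Bigr| < \sum_{i=1}^n \Bigl(\prod_{j<i}|\bar{x}_j|\Bigr)\Bigl(\prod_{j>i}|x_j|\Bigr)\,\epsilon_i.
\end{equation*}

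Next I would define the coefficients $h_i$. Because $\bar{x}\in\Omega$ with $\Omega$ bounded, there is a constant $M>0$ such that $|\bar{x}_j|\le M$ for every $j$; the constants $x_j$ are fixed, so $\prod_{j>i}|x_j|$ is a finite number. Setting
\begin{equation*}
h_i \;=\; M^{\,i-1}\prod_{j>i}|x_j| \;+\; \gamma,
\end{equation*}
with any arbitrarily small $\gamma>0$ to turn the non-strict estimate into a strict one, gives positive coefficients depending only on $\Omega$ and on the fixed values $x_j$, and the desired strict inequality $\bigl|\prod \bar{x}_i - \prod x_i\bigr| < \sum_{i=1}^n h_i \epsilon_i$ follows.

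An alternative route is induction on $n$: the base case $n=1$ is immediate with $h_1=1$, and the inductive step uses the splitting $\prod_{i=1}^n \bar{x}_i - \prod_{i=1}^n x_i = \bar{x}_n\bigl(\prod_{i=1}^{n-1}\bar{x}_i - \prod_{i=1}^{n-1}x_i\bigr) + (\bar{x}_n - x_n)\prod_{i=1}^{n-1}x_i$, together with boundedness of $\bar{x}_n$ on $\Omega$, to propagate the coefficients. I expect no real obstacle here; the only subtlety is making sure the coefficients $h_i$ are finite and independent of the particular $\bar{x}$, which is precisely where the boundedness of $\Omega$ enters. The hypothesis that the $\epsilon_i$ are sufficiently small is not actually needed for the bound to hold, but it explains why this linear surrogate is useful downstream: the omitted higher-order cross terms (such as $\epsilon_i\epsilon_j$) are negligible compared with the retained first-order terms, which is the regime in which this lemma is subsequently applied to the gradient-computation error in $\mathcal{M}^{le}_1$.
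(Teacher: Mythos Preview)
Your proof is correct and essentially matches the paper's approach: the paper proves the lemma by the recursive splitting $|\prod_{i=1}^n \bar{x}_i - \prod_{i=1}^n x_i| < f_n\,|\prod_{i=1}^{n-1}\bar{x}_i - \prod_{i=1}^{n-1}x_i| + c_{n-1}\epsilon_n$ with individual bounds $|\bar{x}_i|<f_i$ and $c_j=|\prod_{i\le j}x_i|$, which is exactly your ``alternative route'' and, unrolled, is the mirror-image telescoping of your primary identity (the paper places the $\bar{x}$-factors on the right of the split rather than the left, yielding $h_i=c_{i-1}\prod_{k>i}f_k$ instead of your $M^{i-1}\prod_{j>i}|x_j|$). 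The only cosmetic difference is that the paper keeps coordinatewise bounds $f_i$ rather than a single $M$, and does not add the $\gamma$ slack.
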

\begin{proof}
Let's say $\exists f_i > 0$ such that $\vert \bar{x}_i \vert < f_i$ since all $\bar{x}_i$ is bounded. Let $c_j = \vert \prod^{j}_{i=1} x_i \vert$, then we have
\begin{equation*}
    \begin{split}
\vert \prod^{n}_{i=1} \bar{x}_i - \prod^{n}_{i=1} x_i \vert  < f_n \vert \prod^{n-1}_{i=1} \bar{x}_i - \prod^{n-1}_{i=1} x_i \vert + c_{n-1} \vert \bar{x}_n - x_n \vert.
    \end{split}
\end{equation*} 
Using $r_n$ to denote the error derivation of $n$-factor multiplication $\vert \prod^n_{i=1} \bar{x}_i - \prod^n_{i=1}x_i \vert$, we write the recursive inequality
\begin{equation*}
    \begin{split}
r_n & < f_n R_{n-1} + c_{n-1} \epsilon_n \\
& < f_n (f_{n-1}r_{n-2} + c_{n-2} \epsilon_{n-1} ) +c_{n-1} \epsilon_n \\
& < \prod^{n}_{i=2} f_i r_1 + \sum^{n}_{i=2} (c_{i-1} \epsilon_i \prod^{n}_{k=i+1} f_k) \\
& < \prod^{n}_{i=2} f_i \epsilon_1 + \sum^{n}_{i=2} (c_{i-1} \prod^{n}_{k=i+1} f_k) \epsilon_i.
    \end{split}
\end{equation*}
Thus, the parameter set $\{h_i\}$ are chosen as  $h_1 = \prod^n_{i=2} f_i$, $h_i = c_{i-1} \prod^n_{k=i+1} f_k$ for $i \geq 2$.
\end{proof}

Lemma \ref{lem_multi} concludes that the multiplication error is the sum of initial errors of all factors, contributing to derive the accumulative error of monomial species $\{Q^l_{\tilde{s}\pm\pm}, Q^l_{v\pm}\}$ for $\tilde{s} \in I_s = I^1_s \bigcup I^2_s = \{1,2,3,4\} \bigcup \{7,8\}$ and $v \in I_v = I^1_v \bigcup I^2_v =\{5,6\} \bigcup \{9\}$ with concentrations $q^l_{\tilde{s}\pm\pm}(t), q^l_{v\pm}(t)$ in $\mathcal{O}^1_{27}$. Taking $Q^l_{1++}$ as an example, we have the dynamical equations for computing $q^l_{1++}=e^+_l\mathcal{W}^+_{2_{11}}y_l(1-y_l)\tilde{\Upsilon}_{1l} (1-\tilde{\Upsilon}_{1l}) \Xi_{1l}$ that
\begin{gather*}
    \begin{split}
\dot{u}_1(t) & = e^+_l(t)p^+_{1l}(t) - u_1(t),~ \dot{u}_2(t) = \tilde{y}^l(t)s^l_y(t) - u_2(t),\\
\dot{u}_3(t) & = s^l_{p_i}(t)w^+_{2_{11}}(t) - u_3(t),~ \dot{\tilde{u}}_1(t) = u_1(t)u_2(t) - \tilde{u}_1(t),\\
\dot{\tilde{u}}_2(t) & = s^l_1(t)u_3(t) - \tilde{u}_2(t),~\dot{q}^l_{1++}(t)=\tilde{u}_1(t)\tilde{u}_2(t)-q^l_{1++}(t),
    \end{split}
\end{gather*}
where $u_i(t), \tilde{u}_j(t)$ denotes concentrations of intermediate species in $\mathcal{O}^1_{27}$ and we direct readers to Part \Rmnum{1} for more details. This approach is effective since initial errors of factors constituting $\bar{q}^l_{1++} = e^+_l(T) \tilde{p}_{1l}(T) \tilde{y}_l(T) s^l_y(T) s^l_{p_i}(T) w^+_{2_{11}}(0) s^l_1(T)$ have been previously derived.
Here, we introduce $par^{\pm}(t) \in \mathbb{R}^{3 \times 3}_{\geq 0}$ to denote the concentration matrix of negative species pairs $\{Par^{\pm}_{\tilde{s}}, Par^{\pm}_{v}\}$ in $\mathcal{O}^2_{27}$ with the correspondence between subscripts $\tilde{s}$ and $ij$ in $par^{\pm}(t)$ following $1,3,7 \leftrightarrow 11, 12, 13$, $2,4,8 \leftrightarrow 21, 22, 23$, $5,6,9 \leftrightarrow 31, 32, 33$, respectively. We illustrate dynamical equations of $\mathcal{O}^2_{27}$ by species$\{Par^{\pm}_1\}$
\begin{gather*}
    \begin{split}
\dot{par}^{\pm}_{11}(t) = \sum^{\tilde{p}}_{l=1} (q^l_{1+\pm}(t)+q^l_{1-\mp}(t)) - par^{\pm}_{11}(t).
    \end{split}
\end{gather*}
Then, by combining the exponential convergence characterization of type-\Rmnum{2} CRN (see Lemma 3 in \cite{fan2023automatic}) and incorporating the addition component, we present total errors of the positive $-\nabla \mathcal{E}^1_+$ and negative parts $-\nabla \mathcal{E}^1_-$ of negative gradients, computed in $\mathcal{M}^{le}_{1}$, for all weights. 
\begin{proposition}
Let $-\nabla \mathcal{E}^1_{\pm} = ((par^{1}_{\pm,1})^{\top}, (par^{1}_{\pm,2})^{\top})^{\top}$ with $par^{1}_{\pm,1} \in \mathbb{R}^{2 \times 3}_{\geq 0}, par^{1}_{\pm,2} \in \mathbb{R}^{1 \times 3}_{\geq 0}$ be the standard positive and negative part of $-\nabla \mathcal{E}^1$. Then, there exist the positive parameter set $\{\mathcal{L}^q_{\tilde{s}}\}^{6}_{q=1}$, $\{\mathcal{L}^{7}_{\tilde{s},i}\}^{2}_{i=1}$, $\{\mathcal{L}^8_{\pm, \tilde{s}}\}$ and $\tilde{\gamma} = 1-\frac{1}{e}$ for species $\{Par^{\pm}_{\tilde{s}}\}$ in $\mathcal{O}^2_{27}$ with $\tilde{s} \in I_s$  such that 
\begin{equation}
    \begin{split}
\vert & par^{\pm}_{1_{ij}}(T)  - par^1_{\pm,1_{ij}} \vert < \mathcal{L}^1_{\tilde{s}} e^{-\tilde{K}^{\ast} T} + \mathcal{L}^2_{\tilde{s}} Te^{-\alpha_3 T}  \\
&
+ \mathcal{L}^3_{\tilde{s}} T^2 e^{-T} + \mathcal{L}^4_{\tilde{s}} T^2e^{-\alpha T} + \mathcal{L}^5_{\tilde{s}} Te^{- n ^1 T}  \\
& + \mathcal{L}^6_{\tilde{s}} e^{- \tilde{n}^1  T} +\mathcal{L}^{7}_{\tilde{s},i} e^{-K^{i,\ast} T}
+ \mathcal{L}^8_{\pm, \tilde{s}} e^{-\tilde{\gamma} T}.
    \end{split}
\label{eq:Prop51}
\end{equation}
Concerning species $\{Par^{\pm}_{v}\}$ with $v \in I_v$, we can find another positive parameter set $\{\tilde{\mathcal{L}}^q_{\pm,v}\}^7_{q=1}$ for the error estimation 
\begin{equation}
    \begin{split}
& \vert  par^{\pm}_{2_{1j}}(T) - par^1_{\pm,2_{1j}} \vert < \tilde{\mathcal{L}}^1_{\pm,v} e^{-\tilde{K}^{\ast} T} + \tilde{\mathcal{L}}^2_{\pm,v} Te^{-\alpha_3 T} \\
 &  +  \tilde{\mathcal{L}}^3_{\pm,v} T^2 e^{-T} + \tilde{\mathcal{L}}^4_{\pm,v} T^2e^{-\alpha T} + \tilde{\mathcal{L}}^5_{\pm,v} Te^{-n^1 T} \\
 & + \tilde{\mathcal{L}}^6_{\pm,v} e^{-\tilde{n}^1 T}  + \tilde{\mathcal{L}}^7_{\pm,v} e^{-\tilde{\gamma} T}
    \end{split}
\label{eq:prop52}
\end{equation}
with $par^{\pm}(t)=((par^{\pm}_1(t))^{\top}, (par^{\pm}_2(t))^{\top})^{\top}$.
\end{proposition}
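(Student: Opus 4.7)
The plan is to bound the error in two stages matching the two sub-phases of $\mathcal{M}^{le}_1$: first the multiplication phase $\mathcal{O}^1_{27}$ that produces each monomial $q^l_{\tilde{s}\pm\pm}$ or $q^l_{v\pm}$, then the summation phase $\mathcal{O}^2_{27}$ that aggregates them into $par^{\pm}_{\tilde{s}}$ and $par^{\pm}_{v}$. The equilibrium of the second stage depends on the terminal monomial values, so the total deviation decomposes into a current error of $\mathcal{O}^2_{27}$ plus a vertically propagated accumulative error equal to the sum (over the batch index $l$) of first-stage monomial errors.

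For the first stage I would fix a representative monomial, for instance $q^l_{1++}$ whose target is $e^+_l \mathcal{W}^+_{2_{11}} y^1_l(1-y^1_l)\tilde{\Upsilon}_{1l}(1-\tilde{\Upsilon}_{1l})\Xi_{1l}$. Its accumulative error admits a Lemma \ref{lem_multi} decomposition in terms of the already-bounded factor deviations: $\vert e^{\pm}_l(T)-e^{\pm}_l\vert$, $\vert s^l_y(T)-(1-y^1_l)\vert$ and $\vert s^l_{p_i}(T)-(1-\Upsilon^1_{il})\vert$ from Lemma \ref{lem_bound2}, $\vert \tilde{y}_l(T)-y^1_l\vert$ from Corollary \ref{coro_1} combined with the pre-calculation bound on $\tilde{y}_l$, $\vert \tilde{p}_{il}(T)-\Upsilon^1_{il}\vert$ via Proposition \ref{prop-psp} and the pre-calculation module, $\vert w^{\pm}_{2_{1i}}(0)-\mathcal{W}^{\pm}_{2_{1i}}\vert = 0$ at $m=1$, and $\vert s^l_j(T)-\Xi_{jl}\vert$ from Proposition \ref{prop:assign_1}. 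The cascade of first-order multiplicative sub-reactions in $\mathcal{O}^1_{27}$ is a Type-\Rmnum{1} CRN, so its own current error contributes an additional $O(e^{-T})$ term. Consolidating and retaining the dominant exponents yields a bound on $\vert q^l_{\tilde{s}\pm\pm}(T) - \bar q^l_{\tilde{s}\pm\pm}\vert$ as a sum of the form $\sum_j c_j\, \pi_j(T)\, e^{-\beta_j T}$ with $\beta_j \in \{\tilde K^{\ast},\alpha_3,1,\alpha,n^1,\tilde n^1,K^{i,\ast}\}$ and $\pi_j(T)$ a low-degree polynomial prefactor.

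For the second stage, $\mathcal{O}^2_{27}$ is a Type-\Rmnum{2} CRN with exponential rate $\tilde{\gamma}=1-1/e$ by Lemma 3 of Part \Rmnum{1}, yielding the $\mathcal{L}^8_{\pm,\tilde{s}}\, e^{-\tilde{\gamma}T}$ current-error term (and analogously $\tilde{\mathcal{L}}^7_{\pm,v}\,e^{-\tilde{\gamma}T}$ for the bias case). Its accumulative component equals the finite sum over $l=1,\dots,\tilde{p}$ of first-stage monomial errors, which is absorbed into the coefficients up to a factor of $\tilde{p}$. The bias-gradient monomials $q^l_{v\pm}$ do not involve the hidden-node index $i$ (no backpropagated term through the hidden layer), which is why \eqref{eq:prop52} lacks the $e^{-K^{i,\ast}T}$ summand and carries seven terms rather than eight.

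The main obstacle is the combinatorial bookkeeping inside Lemma \ref{lem_multi}: each monomial has up to seven factors, each factor bound is itself a sum of several exponentials with distinct rates, and after the product expansion one faces many cross terms. Applying the leading-term principle (as in the closing step of Proposition \ref{prop:assign_1}) to collapse mixed products like $e^{-(1+\alpha)T}$ into the dominant single-rate terms, and carefully counting how the polynomial prefactors $T$ and $T^2$ arise when identical exponents convolve through the cascade of first-order relaxations within $\mathcal{O}^1_{27}$, is the delicate step. Once these contributions are collected and absorbed into the constants $\mathcal{L}^q_{\tilde{s}}$, $\mathcal{L}^7_{\tilde{s},i}$, $\mathcal{L}^8_{\pm,\tilde{s}}$ and $\tilde{\mathcal{L}}^q_{\pm,v}$, both bounds \eqref{eq:Prop51} and \eqref{eq:prop52} follow.
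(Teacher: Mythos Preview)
Your approach is essentially the paper's: apply Lemma~\ref{lem_multi} to each monomial's accumulative error using the already-derived factor bounds, sum over $l$ (taking maxima over $l$ to fix the coefficients $\mathcal{L}^q_{\tilde s}$, $\tilde{\mathcal{L}}^q_{\pm,v}$), and invoke the type-\Rmnum{2} CRN estimate (Lemma~3 of Part~\Rmnum{1}) for the current-error term at rate $\tilde{\gamma}=1-1/e$. The only nuance is that $\mathcal{O}^1_{27}$ and $\mathcal{O}^2_{27}$ run simultaneously in the single phase $\mathcal{O}_{27}$ rather than sequentially, so the paper's type-\Rmnum{2} current-error bound covers the entire multiplicative cascade at once and there is no separate type-\Rmnum{1} $O(e^{-T})$ current error for the monomial stage---but this is harmless for your argument, since any such term would be absorbed into $\mathcal{L}^3_{\tilde{s}}$.
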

\begin{proof} 
Firstly, Lemma \ref{lem_multi} ensures the existence of positive coefficients $\{h^r_{\tilde{s}\pm\pm}\}^{7}_{r=1}$ for $\{Q^l_{\tilde{s}\pm\pm}\}_{\tilde{s} \in I^1_s}$, $\{h^r_{\tilde{s}\pm\pm}\}^{6}_{r=1}$ for $\{Q^l_{\tilde{s}\pm\pm}\}_{\tilde{s} \in I^2_s}$, $\{h^r_{v\pm}\}^4_{r=1}$ for $\{Q^l_{v\pm}\}_{v \in I^1_v}$, and $\{h^r_{v\pm}\}^3_{r=1}$ for $\{Q^l_{v\pm}\}_{v \in I^2_v}$ to have the accumulative error estimation formulas.
Then, note that $\vert \bar{q}^l_{\tilde{s}++} - q^l_{\tilde{s}++} \vert = \vert \bar{q}^l_{\tilde{s}+-} - q^l_{\tilde{s}+-} \vert, \vert \bar{q}^l_{\tilde{s}--} - q^l_{\tilde{s}--} \vert = \vert \bar{q}^l_{\tilde{s}-+} - q^l_{\tilde{s}-+} \vert$ where $\tilde{s} \in I_s$ since $w^{\pm}_2(0)$ is accurate for all $\bar{q}^l_{\tilde{s}\pm \pm}$ when $m=1$. Substituting total errors of $E^{\pm}_l, S^l_y, Y^l, S^l_{p_i}, P^l_i, S^l_i$ with $i=\{1,2\}$ deduced for upstream modules into the aforementioned formulas, one can separately derive $\{(\mathcal{L}^q_{\pm, \tilde{s}})^l\}^6_{q=1}$,  $\{(\mathcal{L}^{7}_{\pm,\tilde{s},i})^l\}^2_{i=1}$ to be coefficients of various exponential items in the upper bound of $\vert \bar{q}^l_{\tilde{s}+\pm} - q^l_{\tilde{s}+\pm} \vert$ and $\vert \bar{q}^l_{\tilde{s}-\pm} - q^l_{\tilde{s}-\pm} \vert$. 
Furthermore, it holds for $\mathcal{W}^1_1$ that
\begin{equation*}
    \begin{split}
\vert \bar{par}^{\pm}_{1_{ij}} - par^1_{\pm,1_{ij}} \vert & < \sum^{\tilde{p}}_{l=1} \vert \bar{q}^l_{i+\pm} - q^l_{i+\pm} \vert + \vert \bar{q}^l_{i-\mp} - q^l_{i-\mp} \vert.
    \end{split}
\end{equation*}
Let $(\mathcal{L}^{q}_{\tilde{s},i})^l = (\mathcal{L}^{q}_{+,\tilde{s},i})^l + (\mathcal{L}^{q}_{-,\tilde{s},i})^l$ for $q$ and $i$ if any.  After summing over all $l$, we select the final parameters for each type of item by retaining the exponential power maximum point and $\tilde{p}$-fold of the coefficient maximum point concerning $l$, for example, $\{\frac{\mathcal{L}^1_{\tilde{s}}}{\tilde{p}},\tilde{K}^{\ast}\} = \mathop{\arg\max}\limits_{l} \{(\mathcal{L}^1_{\tilde{s}})^l e^{-(\tilde{K})^lT}\}$. Finally, $\mathcal{L}^8_{\pm, \tilde{s}}$ and $\tilde{\gamma}$ is determined by the current error of $\{Par^{\pm}_{\tilde{s}}\}$ assured by Lemma 3 in \cite{fan2023automatic}.

A similar analysis is applied for $\mathcal{W}^1_2$ that there exists $\{(\tilde{\mathcal{L}}^q_{\pm,v})^l\}^6_{q=1}$ working for $\vert \bar{q}^l_{\pm,v} - q^l_{\pm,v} \vert$, $\{\tilde{\mathcal{L}}^j_{\pm,v}\}^6_{j=1}$ contributing to accumulative errors of $\{Par^{\pm}_v\}$ based on
\begin{equation*}
    \begin{split}
\vert \bar{par}^{\pm}_{2_{1j}} - par^{1}_{\pm, 2_{1j}} \vert  < \sum^{\tilde{p}}_{l=1} \vert \bar{q}^l_{\pm,v} - q^l_{\pm,v} \vert
    \end{split}
\end{equation*}
and $\{\tilde{\mathcal{L}}^7_{\pm,v}\}, \tilde{\gamma}$ for their current errors. 
\end{proof}
 
Estimating the distance between the real iterative sequence $w^{\pm}(T) = ((w^{\pm}_1(T))^{\top},(w^{\pm}_2(T))^{\top})^{\top} \in \mathbb{R}^{3 \times 3}_{\geq 0}$ and standard sequence $\mathcal{W}^2_{\pm}$ highlights the difference in the training performance of two neural networks. As displayed in Fig.\ref{fig:8}, the deviation in computing negative gradients by $\mathcal{M}^{le}_1$ and storing current weight/bias concentrations in species $\{\Gamma^{\pm}_{i}\}^9_{i=1}$ by $\mathcal{O}^2_7$ \cite{fan2023automatic} leads to the accumulative error on incremental weight/bias species $\{\Delta W^{\pm}_i\}^6_{i=1}$, $\{\Delta B^{\pm}_j\}^3_{j=1}$ in $\mathcal{M}^{le}_2$ and weight/bias species. Here, $\Delta w^{\pm}(t) \in \mathbb{R}^{3 \times 3}_{\geq 0}$ is regarded as the concentration matrix of incremental species 
with $\Delta w^{\pm}_{1\cdot}(t)$ to identify $\Delta W^{\pm}_1, \Delta W^{\pm}_3, \Delta B^{\pm}_1$'s, $\Delta w^{\pm}_{2\cdot}(t)$ to denote $\Delta W^{\pm}_2, \Delta W^{\pm}_4, \Delta B^{\pm}_2$'s and $\Delta w^{\pm}_{3\cdot}(t)$ to indicate $\Delta W^{\pm}_5, \Delta W^{\pm}_6, \Delta B^{\pm}_3$'s. The initial deviation of $\Gamma^{\pm}_{i}$ is  
\begin{equation*}
    \begin{split}
\Vert \gamma^{\pm} (T) - \mathcal{W}^1_{\pm} \Vert & < \Vert \gamma^{\pm}(T) - w^{\pm}(0) \Vert + \Vert w^{\pm}(0) - \mathcal{W}^1_{\pm} \Vert \\
& < \Vert \gamma^{\pm}(T) - w^{\pm}(0) \Vert < \mathcal{I}^{\pm} e^{-T},
    \end{split}
\label{eq:gamma_(T)}
\end{equation*}
where $\gamma^{\pm}(t) \in \mathbb{R}^{3 \times 3}_{\geq 0}$ and
$\mathcal{I}^{\pm} = \Vert w^{\pm}(0) - \gamma^{\pm}(0)\Vert$. By combining the established exponential convergence of $\mathcal{M}^{le}_2$ \cite{fan2023automatic}, and 
denoting the upper bound of \eqref{eq:Prop51} and \eqref{eq:prop52} by $\{\mathcal{L}^{qe}_{\pm,\tilde{s},i}\}^8_{q=1}, \{\tilde{\mathcal{L}}^{qe}_{\pm,v}\}^7_{q=1}$,
we present the following theorem to conclude the overall deviation in updating weights during the first iteration. 
\begin{theorem}
Assigning appropriate initial concentrations to all species in the BFCNN, for $m=1$, there exist the parameter sets $\{\tilde{\mathcal{R}}^{\tilde{q}}, \tilde{\mathcal{R}}^{q}_{\pm}\}^8_{\tilde{q} \neq \{3,8\}}$, $\{\tilde{\mathcal{S}}^{q}_{\pm}\}^7_{q=1} > 0$ such that
\begin{gather}
    \begin{split} \label{eq:estiW_1}
\Vert & w^{\pm}_{1}(T) - \mathcal{W}^2_{\pm,1} \Vert ^{m=1} < \Vert w^{\pm}_{1}(0) -\mathcal{W}^1_{\pm,1} \Vert^{m=1} + \tilde{\mathcal{R}}^1 e^{-\tilde{K}^{\ast}T} \\
& + \tilde{\mathcal{R}}^2 Te^{-\alpha_3 T} + \tilde{\mathcal{R}}^3_{\pm} T^2 e^{-T} + \tilde{\mathcal{R}}^4 T^2e^{-\alpha T} + \tilde{\mathcal{R}}^5 Te^{-n^1 T}  \\
& + \tilde{\mathcal{R}}^6 e^{-\tilde{n}^1 T} +\tilde{\mathcal{R}}^7 e^{-K^{\ast}T} + \tilde{\mathcal{R}}^8_{\pm} e^{-\tilde{\gamma} T},
\end{split}\\
\begin{split}\label{eq:estiW_2}
\Vert & w^{\pm}_{2}(T) - \mathcal{W}^2_{\pm,2} \Vert ^{m=1} < \Vert w^{\pm}_{2}(0) -\mathcal{W}^{1}_{\pm,2} \Vert^{m=1} + \tilde{\mathcal{S}}^1_{\pm} e^{-\tilde{K}^{\ast}T} \\ 
& + \tilde{\mathcal{S}}^2_{\pm} Te^{-\alpha_3 T} + \tilde{\mathcal{S}}^3_{\pm} T^2 e^{-T} + \tilde{\mathcal{S}}^4_{\pm} T^2e^{-\alpha T} + \tilde{\mathcal{S}}^5_{\pm} Te^{-n^1 T}  \\
& + \tilde{\mathcal{S}}^6_{\pm} e^{-\tilde{n}^1 T}+ +\tilde{\mathcal{S}}^7_{\pm}  e^{-\tilde{\gamma} T}.
    \end{split}
\end{gather}
\end{theorem}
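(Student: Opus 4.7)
The plan is to decompose the deviation $\Vert w^{\pm}_i(T) - \mathcal{W}^2_{\pm,i}\Vert$ by the triangle inequality into a \emph{current error} $\Vert w^{\pm}_i(T) - \bar{w}^{\pm}_i\Vert$ (distance of the trajectory to the local LSS of $\mathcal{M}^{le}_2$) and an \emph{accumulative error} $\Vert \bar{w}^{\pm}_i - \mathcal{W}^2_{\pm,i}\Vert$ (distance of that LSS to the ideal post-update weight). For $m=1$ the injection is exact so $\Vert w^{\pm}(0) - \mathcal{W}^1_{\pm}\Vert = 0$; the term is retained as the leading summand on the right side to prepare the recursion that Section \ref{sec:error_m iteration} will exploit.

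First, I would recall the ODE system of $\mathcal{M}^{le}_2$ from Part I. Its dynamics drive $w^{\pm}(t)$ to $\bar{w}^{\pm} = \gamma^{\pm}(T) + \eta\, par^{\pm}(T)$, and Lemma 3 of Part I furnishes exponential convergence at rate $\tilde{\gamma}=1-1/e$, so
\begin{equation*}
\Vert w^{\pm}(T) - \bar{w}^{\pm}\Vert \;<\; C_{\pm}\,e^{-\tilde{\gamma} T}
\end{equation*}
for constants $C_{\pm}$ determined by the initial concentrations of the incremental species $\{\Delta W^{\pm},\Delta B^{\pm}\}$. This supplies the $e^{-\tilde{\gamma} T}$ contributions appearing in \eqref{eq:estiW_1}--\eqref{eq:estiW_2} through the coefficients $\tilde{\mathcal{R}}^8_{\pm}$ and $\tilde{\mathcal{S}}^7_{\pm}$.

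Next, for the accumulative piece I write $\mathcal{W}^2_{\pm,i} = \mathcal{W}^1_{\pm,i} + \eta\bigl(-\nabla\mathcal{E}^1_{\pm,i}\bigr)$ (using dual-rail encoding), apply the triangle inequality, and substitute the two upstream bounds: $\Vert \gamma^{\pm}(T) - \mathcal{W}^1_{\pm}\Vert \leq \Vert w^{\pm}(0) - \mathcal{W}^1_{\pm}\Vert + \mathcal{I}^{\pm}e^{-T}$ together with Proposition 5, which controls $\Vert par^{\pm}(T) - (-\nabla\mathcal{E}^1_{\pm})\Vert$ by the explicit sum of exponentials with coefficients $\{\mathcal{L}^q_{\tilde{s}}, \mathcal{L}^7_{\tilde{s},i}, \mathcal{L}^8_{\pm,\tilde{s}}\}$ for the $w^{\pm}_1$ block and $\{\tilde{\mathcal{L}}^q_{\pm,v}\}$ for the $w^{\pm}_2$ block. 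Multiplying the Proposition 5 bounds by $\eta$, adding the $\mathcal{I}^{\pm}e^{-T}$ contribution (which is absorbed into the $T^2e^{-T}$ term since $e^{-T}\le T^2 e^{-T}$ for $T\ge 1$, modifying the coefficient $\tilde{\mathcal{R}}^3_{\pm}$ or $\tilde{\mathcal{S}}^3_{\pm}$), and combining with the current error completes the collection.

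Finally, I would define the coefficients $\tilde{\mathcal{R}}^q, \tilde{\mathcal{R}}^q_{\pm}, \tilde{\mathcal{S}}^q_{\pm}$ by grouping each exponential decay rate across the two sources: e.g., $\tilde{\mathcal{R}}^1 = \eta\,\mathcal{L}^1_{\tilde{s}}$ for the $e^{-\tilde{K}^{\ast}T}$ term and $\tilde{\mathcal{R}}^8_{\pm} = C_{\pm} + \eta\,\mathcal{L}^8_{\pm,\tilde{s}}$ for the $e^{-\tilde{\gamma} T}$ term, with sign-dependence preserved only where the upstream bounds themselves carry $\pm$ asymmetry (explaining why $\tilde{\mathcal{R}}^{\tilde{q}}$ for $\tilde{q}\notin\{3,8\}$ is sign-free but $\tilde{\mathcal{R}}^3_{\pm}, \tilde{\mathcal{R}}^8_{\pm}$ are not); the analogous bookkeeping for the output-layer block yields the $\tilde{\mathcal{S}}^q_{\pm}$, noting that \eqref{eq:estiW_2} lacks an $e^{-K^{\ast}T}$ term because the second-layer gradient does not carry the $K^i$ exponents from $\mathcal{O}^2_{23}$. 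The main obstacle is the bookkeeping: keeping the exponential rates and their polynomial-in-$T$ prefactors consistent across the two halves of the dual-rail encoding and across the two weight blocks, and verifying that each retained term is genuinely the leading contribution (so that higher-order cross terms such as $e^{-(\alpha+1)T}$ may safely be dropped).
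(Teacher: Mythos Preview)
Your decomposition and overall strategy match the paper's proof: split into current error plus accumulative error, identify the LSS of $\mathcal{M}^{le}_2$ as $\bar{w}^{\pm}=\gamma^{\pm}(T)+\eta\,par^{\pm}(T)$, invoke Proposition~5 for the gradient block, use the $\gamma^{\pm}$ bound with $\mathcal{I}^{\pm}e^{-T}$, and absorb lower-order pieces into the $T^2e^{-T}$ coefficient under $T\ge 1$.

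There is, however, one concrete misidentification. You claim that Lemma~3 of Part~\Rmnum{1} gives the current error of $\mathcal{M}^{le}_2$ a decay rate $\tilde{\gamma}=1-1/e$, and you route that contribution into $\tilde{\mathcal{R}}^8_{\pm}$ and $\tilde{\mathcal{S}}^7_{\pm}$. This is not what happens. The update module $\mathcal{M}^{le}_2$ is the linear cascade
\[
\dot{\Delta w}^{\pm}=\eta\,par^{\pm}(T)-\Delta w^{\pm},\qquad \dot{w}^{\pm}=\gamma^{\pm}(T)+\Delta w^{\pm}-w^{\pm},
\]
which the paper solves explicitly; its current error is $\mathcal{T}^{\pm}_j e^{-T}+\Delta\mathcal{T}^{\pm}_j\,Te^{-T}$, i.e.\ rate~$1$, not $\tilde{\gamma}$. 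Lemma~3 of Part~\Rmnum{1} (type-\Rmnum{2} CRN) applies to the \emph{gradient} module $\mathcal{M}^{le}_1$ and is already baked into Proposition~5 through $\mathcal{L}^8_{\pm,\tilde{s}}$ and $\tilde{\mathcal{L}}^7_{\pm,v}$. Consequently, in the paper $\tilde{\mathcal{R}}^8_{\pm}=\eta\,\mathcal{L}^{8,1}_{\pm}$ comes \emph{only} from Proposition~5, while the current error of $\mathcal{M}^{le}_2$ (the $e^{-T}$ and $Te^{-T}$ pieces, together with $\mathcal{I}^{\pm}e^{-T}$) is what makes $\tilde{\mathcal{R}}^3_{\pm}=\eta\,\mathcal{L}^{3,1}+\mathcal{I}^{\pm}+\mathcal{T}^{\pm}_1+\Delta\mathcal{T}^{\pm}_1$ sign-dependent. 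Your formula $\tilde{\mathcal{R}}^8_{\pm}=C_{\pm}+\eta\,\mathcal{L}^8_{\pm,\tilde{s}}$ therefore misplaces the $C_{\pm}$ contribution. The fix is simply to replace the Lemma~3 invocation by the explicit solution of the cascade and redirect those terms into the $T^2e^{-T}$ bucket; the rest of your argument stands.
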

\begin{proof}
The equations of the module $\mathcal{M}^{le}_2$ are
\begin{equation}
\begin{split}
\dot{\Delta w^{\pm}}(t)& =\eta par^{\pm} (T) - \Delta w^{\pm}(t), \\
\dot{w}^{\pm} (t) & = \gamma^{\pm}(T) + \Delta w^{\pm}(t)-w^{\pm}(t),
\end{split}
\label{eq:w,deltaw}
\end{equation}
where and we have the solution below
\begin{equation*}
    \begin{split}
\Delta w^{\pm}(t) & =\Delta w^{\pm}(0)e^{-t} +\eta par^{\pm}(T) (1-e^{-t}),\\
w^{\pm}(t) & = \gamma^{\pm}(T) + \eta par^{\pm}(T) + (w^{\pm}(0)- \gamma^{\pm}(T) \\
& - \eta par^{\pm}(T))e^{-t} +(\Delta w^{\pm}(0)-\eta par^{\pm}(T) )te^{-t}.
    \end{split}
\end{equation*}
Then, let $\{\mathcal{L}^{qe,1}_{\pm}\}= \mathop{\max}\limits_{\tilde{s} \in I_s, i} \{\mathcal{L}^{qe}_{\pm, \tilde{s},i}\}$, $\{\mathcal{L}^{qe,2}_{\pm}\}= \mathop{\max}\limits_{v \in I_v} \{\tilde{\mathcal{L}}^{qe}_{\pm, v}\}$ for $i = \{1,2\}$, and denote the first two rows and the last row of $\gamma^{\pm}(t), \Delta w^{\pm}(t)$ by $\gamma^{\pm}_1(t), \Delta w^{\pm}_1(t)$ and $\gamma^{\pm}_2(t), \Delta w^{\pm}_2(t)$, respectively.
We derive the current error 
\begin{equation*}
 \begin{split}
\Vert  w^{\pm}_{j}(T) - \bar{w}^{\pm}_{j} \Vert & = \Vert w^{\pm}_{j}(T)-(\gamma^{\pm}_j(T) + \eta par^{\pm}_j(T))\Vert~~\\
& \leq \mathcal{T}^{\pm}_{j} e^{-T} + \Delta  \mathcal{T}^{\pm}_{j} Te^{-T}, \\
\Vert  \Delta w^{\pm}_{j}(T) - \bar{\Delta w}^{\pm}_{j} \Vert & = \Vert \Delta w^{\pm}_{j}(T) -\eta par^{\pm}_{j}(T)  \Vert \\
& < \Delta \mathcal{T}^{\pm}_{j} e^{-T},
\end{split}   
\end{equation*}
where $j \in \{1,2\}$, and $\mathcal{T}^{\pm}_{j} = \Vert w^{\pm}_{j}(0) - \gamma^{\pm}_{j} - \eta par^{\pm}_j \Vert + \Vert w^{\pm}_{j}(0) - \mathcal{W}^1_{\pm,j} \Vert^{m=1} + \mathcal{I}^{\pm} e^{-T} +\eta \{L^{qe,j}_{\pm}\}$, $\Delta \mathcal{T}^{\pm}_{j} = \Vert \Delta w^{\pm}_{j}(0)- \eta par^{\pm}_j \Vert + \eta \{L^{qe,j}_{\pm}\}$. Thus, the total error is constituted by
\begin{equation*}
    \begin{split}
\Vert  w^{\pm}_{j}(T) - \mathcal{W}^2_{\pm,j} \Vert & \leq \Vert w^{\pm}_j(0) - \mathcal{W}^1_{\pm,j} \Vert^{m=1} + \mathcal{T}^{\pm}_j e^{-T} ~~\\
&~~~ + \Delta \mathcal{T}^{\pm}_j Te^{-T} + \eta \{L^{qe}_{\pm,j}\} + \mathcal{I}^{\pm} e^{-T}, ~~\\ 
\Vert \Delta w^{\pm}_j(T) - \Delta w^{\pm}_j  \Vert & \leq \Delta \mathcal{T}^{\pm}_j e^{-T} + \eta \{L^{qe}_{\pm,j}\}.
    \end{split}
\end{equation*}
Notably, $\Vert w^{\pm}_j(0) - \mathcal{W}^1_{\pm,j} \Vert^{m=1} =0 $ is caused by the vertical propagation from $\gamma^{\pm}(T)$. Assuming that $T>1$ without loss of generality, coefficients after concatenation in $\Vert w^{\pm}_{j}(T) - \mathcal{W}^2_{\pm,1} \Vert^{m=1}$ become $\tilde{\mathcal{R}}^3_{\pm} = \eta \mathcal{L}^{3,1} + \mathcal{I}^{\pm} + \mathcal{T}^{\pm}_1 + \Delta \mathcal{T}^{\pm}_1$, $\tilde{\mathcal{S}}^3_{\pm} = \eta \mathcal{L}^{3,2}_{\pm} + \mathcal{I}^{\pm} + \mathcal{T}^{\pm}_{2} + \Delta \mathcal{T}^{\pm}_{2}$, and $\tilde{\mathcal{R}}^{\tilde{q}} = \eta \mathcal{L}^{\tilde{q},1}, \tilde{\mathcal{R}}^{8}_{\pm} = \eta \mathcal{L}^{8,1}_{\pm}$, $ \tilde{\mathcal{S}}^q_{\pm}=\eta \mathcal{L}^{q,2}_{\pm}$, with $\{ \mathcal{L}^{\tilde{q},1}\}^7_{\tilde{q}=1}, \{\mathcal{L}^{8,1}_{\pm}\}$ and $\{\mathcal{L}^{q,2}_{\pm}\}^7_{q=1}$ representing coefficients of $\{\mathcal{L}^{qe,1}_{\pm}\}$ and $
\{\mathcal{L}^{qe,2}_{\pm}\}$, separately. Additionally, exponential powers remain unchanged except for $K^{\ast} = \mathop{\min}\{K^{1,\ast}, K^{2,\ast}\} $. Hence, \eqref{eq:estiW_1}, \eqref{eq:estiW_2} are obtained.
\end{proof}
\subsection{Error Convergence Order}
The general depiction of the algorithm convergence in silicon refers to deducing the concrete relation inequality between distance ($\Vert \mathcal{W}^m - \mathcal{W}^{\ast} \Vert$ or $\vert \mathcal{E}^m - \mathcal{E}^{\ast} \vert$) and iteration number $m$. From the relation, the convergence order can be determined, which reflects the runtime that an algorithm needs to achieve a specified level of accuracy, i.e., the capability, dependent on $m$, to reduce the distance.  

Until now, we have derived an error estimation for the initial iteration, reasonably described by parameter $T$, for updating weights in BFCNN compared to that in FCNN. The relationship \eqref{eq:estiW_1} and \eqref{eq:estiW_2} ensures that the intrinsic convergence of the algorithm is preserved in the first iteration through our BCRN implementation, given that the realization error converges regarding $T$. Additionally, it should be noted that the phase length of the chemical oscillator affects not only the runtime of BFCNN but also training accuracy. Then, we give the following technical result to lay the foundation for characterizing the convergence order.  
\begin{lemma} \label{eq:lem_for_ expone}
For arbitrary given positive constant $a, b$, there must exist $\mathcal{U}, v>0$ such that $t^a e^{-bt} < \mathcal{U}e^{-vt}$ for $t \geq 0$.
\end{lemma}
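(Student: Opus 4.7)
The plan is to rewrite the inequality in a form that isolates a bounded polynomial-times-exponential expression. Specifically, $t^a e^{-bt} < \mathcal{U}e^{-vt}$ is equivalent to $t^a e^{-(b-v)t} < \mathcal{U}$, so it suffices to choose any $v \in (0, b)$ and then bound the function $g(t) = t^a e^{-(b-v)t}$ on $[0,\infty)$.

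First I would fix $v \in (0, b)$ (for concreteness one could take $v = b/2$, so that $b - v = b/2 > 0$). Since $g$ is continuous on $[0,\infty)$ with $g(0) = 0$ and $g(t) \to 0$ as $t \to \infty$ (because the exponential decay with positive rate $b - v$ dominates any polynomial $t^a$), $g$ attains a finite maximum on $[0,\infty)$.

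Next I would locate the maximum explicitly by computing
\begin{equation*}
g'(t) = t^{a-1} e^{-(b-v)t}\bigl(a - (b-v)t\bigr),
\end{equation*}
so that the unique interior critical point is $t^* = a/(b-v)$, and the maximum value is
\begin{equation*}
g(t^*) = \left(\frac{a}{b-v}\right)^{a} e^{-a}.
\end{equation*}
Taking any $\mathcal{U} > \left(\frac{a}{b-v}\right)^{a} e^{-a}$ then yields $t^a e^{-(b-v)t} \leq g(t^*) < \mathcal{U}$ for all $t \geq 0$, which is precisely the desired inequality after multiplying through by $e^{-vt}$.

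There is no serious obstacle here; the only subtlety is making sure $v$ is chosen strictly less than $b$ so that $b - v > 0$ guarantees the exponential eventually overwhelms the polynomial factor. Any choice $v \in (0,b)$ works, leaving flexibility to tune $v$ (and correspondingly $\mathcal{U}$) when this lemma is invoked later to consolidate the various $T^k e^{-\beta T}$ terms appearing in the error bounds.
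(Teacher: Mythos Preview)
Your proposal is correct and takes essentially the same approach as the paper: both fix $v\in(0,b)$, locate the unique critical point $t^{\star}=a/(b-v)$ of the relevant comparison function, and read off the admissible range for $\mathcal{U}$ from the extremal value there. The only cosmetic difference is that the paper works with the logarithm $g(t)=(b-v)t+\ln\mathcal{U}-a\ln t$ and then inverts the resulting inequality to express joint ranges for $\mathcal{U}$ and $v$ via a relaxation parameter $\delta$, whereas you bound $t^{a}e^{-(b-v)t}$ directly; your explicit maximum $(a/(b-v))^{a}e^{-a}$ coincides with the paper's bound when the parametrisations are matched.
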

\begin{proof}
To find the appropriate constant $U,v$, we consider the function and its stationary point
\begin{equation*}
g(t)= (b-v)t + \ln{\mathcal{U}} -a \ln{t}, ~ t^{\star} = \frac{a}{b-v}.
\end{equation*}
Then, we choose $v<b$ otherwise the inequality will not hold when $t$ becomes large enough. Note that $t^{\star}$ is the minima so $\mathcal{U},v$ needs to meet $g(t^{\star})>0$, that is,
\begin{equation*}
    b-v>e^{\ln{a} -1-\frac{\ln{\mathcal{U}}}{a}}.
\end{equation*}
To make $v>0$, $\mathcal{U},v$ should be chosen for $\ln{a}-1-\frac{\ln{\mathcal{U}}}{a} < \ln{b}$ and $v<b-e^{\ln{a} -1-\frac{\ln{\mathcal{U}}}{a}}$. Therefore, the valid range of $\mathcal{U},v$ is 
\begin{equation*}
\mathcal{U} \geq e^{a(\ln{\frac{a}{b}}-1)+\delta},~0<v \leq b(1-e^{-\frac{\delta}{a}})
\end{equation*}
for any positive relaxation constant $\delta$.
\end{proof}

\begin{remark}
Lemma \ref{eq:lem_for_ expone} guarantees that $\{\mathcal{U}^r_{q,\pm}, v^r_q\}^5_{q=2}$, $\{\mathcal{U}^s_{q,\pm}, v^s_q\}^5_{q=2}$ can be determined for the second to fifth items in 
\eqref{eq:estiW_1}, \eqref{eq:estiW_2}, respectively, such that 
\begin{equation*}
    \begin{split}
\Vert & w^{\pm}_{1}(T) - \mathcal{W}^2_{\pm,1} \Vert ^{m=1} < \Vert w^{\pm}_{1}(0) -\mathcal{W}^1_{\pm, 1} \Vert ^{m=1} + \tilde{\mathcal{R}}^{\pm}e^{-v^r T}, \\
\Vert & w^{\pm}_{2}(T) - \mathcal{W}^2_{\pm,2} \Vert ^{m=1} < \Vert w^{\pm}_{2}(0) -\mathcal{W}^1_{\pm,2} \Vert^{m=1} + \tilde{\mathcal{S}}^{\pm}e^{-v^s T},
    \end{split}
\end{equation*}
where $\tilde{\mathcal{R}}^{\pm}$ and $\tilde{\mathcal{S}}^{\pm}$ are the sum of revised coefficients in \eqref{eq:estiW_1}, \eqref{eq:estiW_2}, respectively, and $v^r=\mathop{\min}\{\tilde{K}^{\ast}, \tilde{n}^1, K^{\ast}, \tilde{\gamma},\{v^r_q\}^5_{q=2}\}$, $v^s=\mathop{\min} \{\tilde{K}^{\ast}, \tilde{n}^1, \tilde{\gamma},\{v^s_q\}^5_{q=2}\}$. Due to $\Vert w^{\pm}_{j}(0) -\mathcal{W}^1_{\pm,j} \Vert ^{m=1}=0$ for $j=1,2$, the initial realization error 
for training weights is controlled by a negative exponential regarding $T$ with rate $v^r, v^s$. In this case, we call the realization error of neural networks through BFCNN possessing \textit{Exponential Order Convergence}, which ensures that a small increment on $T$ will significantly reduce the approximation convergence error. Moreover, larger exponents $v^r, v^s$ lead to greater reductions within a fixed time interval. Consequently, the training in BFCNN can converge to the optimal weights within a given accuracy at a relatively fast rate compared to the non-exponential error form.
\end{remark}
\section{Cumulative Error of the $m_{th}$ $(m \geq 2)$ Iteration}
\label{sec:error_m iteration}
The error estimation for the first iteration clearly illustrates the current error arising from each module and the vertical propagation pathways from the upstream to the downstream modules. The horizontal propagation from previous iterations affects the realization error when BFCNN enters the $m$th ($m \geq 2$) iteration, which may increase the original coefficients and introduce lower-order error entries, thereby causing a greater accumulation of total error over the number of iterations and modules. In this section, we aim to show the general error upper bound formula of any $m$th iteration through recursion. 

Given that the continual operation of BFCNN before achieving the preset accuracy makes experimenters unable to calibrate initial concentrations before every training round, the lateral propagation causes endpoints error derived in the $m-1$th round to be initial errors of the corresponding species for $m \geq 2$. Furthermore, the clear-out module \cite{fan2023automatic} forces the concentrations of species $\{I^l_y, I^l_{pi}\}$ to end up being near one in the first round, namely
\begin{equation*}
\begin{split}
\vert i^l_{y}(2T)-1 \vert & < \vert 1-i^l_{y}(T) \vert e^{-T}, \\
\vert i^l_{p_i}(2T)-1 \vert & < \vert 1-i^l_{p_i}(T) \vert e^{-T},
\end{split}
\end{equation*}
and for species $X \in \mathcal{S}^f, \tilde{X} \in \mathcal{S}^p$ cleared out, concentrations to follow
\begin{equation*}
\vert x(3T) \vert < \vert x(2T) \vert e^{-T}, ~\vert \tilde{x}(2T) \vert < \vert \tilde{x}(T) \vert e^{-T},
\end{equation*}
where $\mathcal{S}^f = \{P^{\pm}_{il}, Y^{\pm}_l\}$ and $\mathcal{S}^p = \{Y^l_s, Y^l_e, \tilde{Y}^l, P^l_{is}, \tilde{P}^l\}$. Based on the specific initial deviation emerging from $m=1$, the horizontal propagation pathways can be investigated by spanning from $m=1$ to $m=2$. Hereafter, coefficients that appeared previously are indicated by appending superscripts $m=1$, and the variable $x^m(t) = x(t+o(m-1)T)$ with $o$ denoting the number of phases in which species $X$ participate in one period. Now, we showcase that the assignment module preserves convergence orders in subsequent iterations.
\begin{proposition}\label{prop:ass_2}
If $c^m = (e_{1+m\tilde{p}-jp}, \cdots, e_{(m+1)\tilde{p}-jp})^{\top}$ $\tilde{c}^m = \mathds{O}_{p \times \tilde{p}}$, and $s^m=(\chi_{\cdot (m-1)\tilde{p}+1-jp}, \cdots, \chi_{\cdot m\tilde{p}-jp})^{\top}$ with $j= \lfloor \frac{\tilde{p}m}{p} \rfloor$ ($m\nmid \frac{p}{\tilde{p}}$) and  $j= \lfloor \frac{\tilde{p}m}{p} \rfloor-1 $ ($m\mid \frac{p}{\tilde{p}}$), there exist parameters sequences $m^m_1$, $ m^m_3, \tilde{m}^m_3$ for $m \geq 2$ such that
\begin{equation} 
    \begin{split}
\Vert s^m(T)-s^m \Vert_m & \leq  m^m_1 e^{-T},\\
\Vert c^m(2T) -c^m \Vert_m & \leq  m^m_3 e^{-kT},\\
 \Vert \tilde{c}^m(2T) -\mathds{O}_{p \times \tilde{p}} \Vert_m & \leq \tilde{m}^m_3 e^{-kT}.
    \end{split}
\label{eq:prop7}
\end{equation}
\end{proposition}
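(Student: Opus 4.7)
The plan is to follow the structure of the proof of Proposition \ref{prop:assign_1}, modified to account for horizontal error propagation from the $(m-1)$th iteration to the $m$th iteration. First, I observe that the dynamical equations of $\mathcal{M}^a_1, \mathcal{M}^a_2, \mathcal{M}^a_3$ are identical at every iteration, so the current-error part of the analysis carries over verbatim; what changes is that the constants now depend on the terminal values from the previous iteration instead of on the prescribed initial data. The genuinely new task is to express the initial-value deviations $\Vert s^m(0) - s^m(0)_{\mathrm{nom}}\Vert$, $\Vert c^m(0) - c^m(0)_{\mathrm{nom}}\Vert$, $\Vert \tilde{c}^m(0) - \tilde{c}^m(0)_{\mathrm{nom}}\Vert$ in terms of the bounds already established in the $(m-1)$th round, where the subscript $\mathrm{nom}$ denotes the nominal values that would be injected if calibration were exact.

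The second step is to invoke the clear-out module (whose dynamics are displayed just above the proposition) to bound these initial deviations. Each cleared species contracts toward its target value with factor $e^{-T}$ per clear-out phase, and similarly $i_y^l, i_{p_i}^l$ approach $1$ exponentially. Combining these contraction factors with the inductive bounds on $s^{m-1}(T), c^{m-1}(2T), \tilde{c}^{m-1}(2T)$ yields initial-value error bounds of the form $C^m e^{-T}$ or $C^m e^{-kT}$, with constants $C^m$ expressed recursively in $m^{m-1}_1, m^{m-1}_3, \tilde{m}^{m-1}_3$. Attention is also required to handle the sample-index rotation implied by $j=\lfloor \tilde{p}m/p \rfloor$, which merely permutes the columns of $c^m$ and the block structure of $\mathcal{K}^l$, so the operator norms governing the estimates remain of the same order as in Proposition \ref{prop:assign_1}.

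The third step is to substitute these propagated initial deviations into the same inequalities used for the first iteration. For $\mathcal{M}^a_1$ the chain $\Vert s^m(T) - s^m \Vert \leq \Vert s^m(0) - \mathcal{X}(0)c^m(0) \Vert e^{-T} + \Vert \bar{s}^m - s^m \Vert$ decomposes into a current-error factor $e^{-T}$ and an accumulative piece that is itself $O(e^{-T})$ by the induction hypothesis; analogous splittings apply to $\mathcal{M}^a_2$ and $\mathcal{M}^a_3$, the latter using the coefficient matrix $\mathcal{K}^l$ exactly as before. By the leading-term principle invoked at the end of Proposition \ref{prop:assign_1}, all subdominant exponentials (e.g.\ $e^{-2kT}$ versus $e^{-kT}$) can be absorbed into fresh constants $m^m_1, m^m_3, \tilde{m}^m_3$ defined as explicit functions of $m^{m-1}_1, m^{m-1}_3, \tilde{m}^{m-1}_3$ together with the norms of $\mathcal{K}^l$ and the clear-out contraction factor.

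The main obstacle is bookkeeping rather than a new analytical idea: I need to verify that the recursion for $m^m_1, m^m_3, \tilde{m}^m_3$ produces a finite sequence (the clear-out contraction $e^{-T}$ in each round prevents unbounded growth, provided $T$ is at least of moderate size), and that in $\mathcal{M}^a_3$ the coefficient $\Vert \mathcal{K}^l \Vert$ multiplying the propagated error in $c$ does not dominate the leading $e^{-kT}$ term. Once these consistency checks are made, the three estimates in \eqref{eq:prop7} follow by induction on $m$, with base case $m=1$ supplied directly by Proposition \ref{prop:assign_1}.
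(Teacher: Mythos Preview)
Your inductive strategy and the treatment of $\mathcal{M}^a_2,\mathcal{M}^a_3$ are essentially what the paper does, but your second step has a genuine gap: the clear-out module is the wrong tool here. None of the assignment-module species $S^l_i,\,C^l_i,\,\tilde{C}^l_i$ belong to the clear-out sets $\mathcal{S}^f=\{P^{\pm}_{il},Y^{\pm}_l\}$ or $\mathcal{S}^p=\{Y^l_s,Y^l_e,\tilde{Y}^l,P^l_{is},\tilde{P}^l\}$, nor are they among $\{I^l_y,I^l_{p_i}\}$. For the order species this is harmless: $c^m(0)=c^{m-1}(2T)$ and $\tilde{c}^m(0)=\tilde{c}^{m-1}(2T)$ participate only in the assignment module, so the inductive bounds from Proposition~\ref{prop:assign_1} apply directly with no clear-out needed. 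The real issue is $s^m(0)$. The species $S^l_3$ evolves in pBCRN via $\dot{s}^l_3=-ky^l_e s^l_3$ in $\mathscr{M}^1$, so $s^m(0)$ is the terminal value after $\mathcal{O}^1_{23}$, not after a clear-out phase. Bounding $\Vert s^m(0)-\mathcal{X}(0)c^m(0)\Vert$ therefore requires the estimates of Lemma~\ref{lem_bound2} on $\vert s^l_3(2T)-s^l_3\vert$, which in turn bring in $M_x e^{-\tilde{K}T}$ and the feedforward output error $\{E\}$. This is why the paper introduces a uniform bound $\Theta_1$ for $m^1_1 e^{-T}+M^1_x e^{-\tilde{K}^1T}+\{E\}^1$ and also uses the design choice $k\geq 1$ to merge the $\Vert\chi\Vert m^{m-1}_3 e^{-kT}$ contribution into the $e^{-T}$ envelope for $s$.

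A smaller point: the proposition only claims that $m^m_1,m^m_3,\tilde{m}^m_3$ exist for each $m$, not that the sequence is bounded in $m$. Your plan to argue that the clear-out contraction ``prevents unbounded growth'' is unnecessary (and, as above, inapplicable); the recursion you write down automatically yields finite constants at each step, which is all that is asserted.
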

\begin{proof}
Firstly, we derive the case of $m=2$ by utilizing Proposition \ref{prop:assign_1}. In $\mathcal{M}^a_1$, initial deviation of $s^2(0), c^2(0)$ results in accumulative errors $\Vert \bar{s}^2 - s^2 \Vert _{m=2} \neq 0$, i.e., 
\begin{equation*}
    \begin{split}
\Vert \chi (0) c^2(0) - \chi c^1 \Vert_{m=2} & \leq  \Vert \chi \Vert \Vert c^1(2T) -c^1  \Vert \leq \Vert \chi \Vert m^1_3 e^{-kT},
    \end{split}
\end{equation*}
and altering the upper bound of the current error $ \Vert s^2(T) - \bar{s}^2 \Vert = \Vert s^2(0)-\chi (0)c^2(0) \Vert_{m=2} e^{-T}$ through
\begin{equation*}
    \begin{split}
\Vert s^2(0)-\chi(0) c^2(0) \Vert_{m=2} & \leq \Vert s^1(2T) - \{s\}_{\mathcal{O}^1_{23}} \Vert + \Vert \chi \Vert m^1_3 e^{-kT} \\
&~~~ + \Vert \{s\}^1_{\mathcal{O}_{23}} - \chi c^1 \Vert.    
    \end{split}
\end{equation*}
By substituting the corresponding items, one can have
\begin{equation*}
    \begin{split}
\Vert & s^2(T) - s^2 \Vert \leq \Vert \{s\}^1_{\mathcal{O}_{23}} -\chi c^1 \Vert e^{-T} + m^1_1 e^{-2T} \\
& + M^1_x e^{-(\tilde{K}^1+1)T} + \{E\}^1 e^{-T} +  \Vert \chi \Vert m^1_3 e^{-kT}(1+e^{-T}). 
    \end{split}
\end{equation*}
Here, $M^1_x, \tilde{K}^1$ denotes the maximum values concerning $l$ in the first round. Note that we can find a uniform upper bound $\Theta_1$ of $m^1_1 e^{-T} + M^1_x e^{-\tilde{K}^1T} + \{E\}^1$ for $\forall T$ and set the rate constant $k \geq 1$,then there exists a uniform upper bound
\begin{equation*}
    \begin{split}
\Vert s^2(T) - s^2 \Vert_{m=2} \leq m^2_1 e^{-T} 
    \end{split}
\end{equation*}
with $m^2_1 = \Theta_1 + 2\Vert \chi \Vert m^1_3 + \Vert \{s\}^1_{\mathcal{O}_{23}} -\chi c^1 \Vert $. Similarly in $\mathcal{M}^a_2$, the initial deviations of $\Vert c^2(0) \Vert$ and $\Vert \tilde{c}^2(0) \Vert$ are presented as 
\begin{equation*}
    \begin{split}
\Vert c^2(0) \Vert & =\Vert c^1(2T) \Vert \leq \Vert c^1(2T) -c^1 \Vert + \Vert c^1 \Vert  \\
& \leq m^1_3 e^{-kT} + \Vert c^1 \Vert, \\
\Vert \tilde{c}^2(0) \Vert & = \Vert \tilde{c}^1(2T) \Vert \leq \tilde{m}^1_3 e^{-kT}.
    \end{split}
\end{equation*}
Then, we derive the new upper bound in $\mathcal{M}^a_2$
\begin{equation*}
    \begin{split}
\Vert c^2(T) - \mathds{O}_{p \times \tilde{p}} \Vert_{m=2} \leq m^2_2 e^{-kT}, ~ \Vert \tilde{c}^2(T) - c^1 \Vert_2 \leq \tilde{m}^2_2 e^{-kT},     
    \end{split}
\end{equation*}
where $m^2_2 = m^1_3 + \Vert c^1 \Vert$, $\tilde{m}^2_2 = m^2_2 + \tilde{m}^1_3 + m^1_3$. 

For $\mathcal{M}^a_3$, Fig.\ref{fig:5} shows that accumulative errors are solely by vertical propagation from $c^2(T), \tilde{c}^2(T)$. Finally, following Proposition \ref{prop:assign_1}, total errors of the assignment module are
\begin{equation*}
    \begin{split}
\Vert c^2(2T) - c^2 \Vert_{m=2} \leq m^2_3 e^{-kT}, ~ \Vert \tilde{c}^2(2T) - \tilde{c}^2 \Vert_{m=2} \leq \tilde{m}^2_3 e^{-kT}.    
    \end{split}
\end{equation*}
Here, $(m^2_3)^l=\Vert \mathcal{K}^l \Vert(\Vert c^1_{\cdot l} \Vert +2 \tilde{m}^2_2)+m^2_2$ with $m^2_3  = \mathop{\max}_{l} \{(m^2_3)^l\} $, and $\tilde{m}^2_3 = \tilde{m}^2_2 + \Vert c^1 \Vert$. The horizontal propagation from $m-1$th to $m$th ($m \geq 2$) on the assignment module follows the same pattern as that from $m=1$ to $m=2$, assuring that it only causes errors of the higher or equal order. Consequently, the error formula \eqref{eq:prop7} can be deduced by treating $m-1$th as the initial deviation through mathematical induction.  
\end{proof}

This result implies that the error convergence order of the assignment module is always maintained regardless of iteration numbers as long as the rate constant is designed as $k>1$. Despite that, a significant impact on the total error estimation of the feedforward module arises from the initial deviation of weights. Next, we show the difference in the general feedforward output errors compared to Proposition \ref{prop-psp} and Corollary \ref{coro_1} by deriving the specific estimation formula.   
\begin{proposition} \label{prop:feed_m}
Using $\{\tilde{\mathcal{R}}^{qe}_{\pm}\}^{1}, \{\tilde{\mathcal{S}}^{qe}_{\pm}\}^{1}$ to denote the exponential upper bounds of \eqref{eq:estiW_1} and \eqref{eq:estiW_2}, abbreviating $\Vert w^{\pm}_{j}(0)-\mathcal{W}^1_{\pm,j} \Vert ^{m=1}$ as $\Vert w^{\pm}_{j} \Vert^1$ with $j \in \{1,2\}$, then there exist coefficient sets $\{\tilde{\mathcal{C}}^2_r\}^2_{r=1}$, $\{\mathcal{D}^2_r\}^5_{r=1}$ such that the output error of species $\{P^l_i, Y^l\}$ during $m=2$ is estimated as
\begin{equation}
    \begin{split}
\Vert p^2(T) - \Upsilon^2 \Vert_{m=2} & \leq \{A\}^{2} + \sum^2_{r=1} \tilde{\mathcal{C}}^2_r \left[\{\tilde{\mathcal{R}}^{qe}\}^{1} + \Vert w_1 \Vert^1 \right] T^{r-1}, \\
\Vert y^2(T) - y^2 \Vert_{m=2} & \leq \{E\}^{2} + \sum^3_{r=1} \mathcal{D}^2_r \left[\{\tilde{\mathcal{R}}^{qe}\}^{1} + \Vert w_1 \Vert^1 \right] T^{r-1} \\
& ~~~ + \sum^5_{r=4} \mathcal{D}^2_r \left[\{\tilde{S}^{qe}\}^{1} + \Vert w_2 \Vert^1 \right] T^{r-4},
    \end{split}
\label{eq:prop8}
\end{equation}
where $p(t)=((p_1(t))^{\top},(p_2(t))^{\top})^{\top} \in \mathbb{R}^{2 \times \tilde{p}}$, $\{\tilde{\mathcal{R}}^{qe}\}^1 = \{\tilde{\mathcal{R}}^{qe}_+\}^{1}+\{\tilde{\mathcal{R}}^{qe}_-\}^{1}, \{\tilde{\mathcal{S}}^{qe}\}^1 = \{\tilde{\mathcal{S}}^{qe}_{+}\}^{1}+\{\tilde{\mathcal{S}}^{qe}_{-}\}^{1}$, $\Vert w_j \Vert^1 = \Vert w^{+}_{j} \Vert^1 + \Vert w^{-}_{j} \Vert^1$. 
\end{proposition}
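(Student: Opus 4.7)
The plan is to mirror the derivation of Proposition 2 (the hidden-layer output estimate) and Corollary 1 (the global-output estimate) from the first iteration, but with two additional propagation sources threaded through every sub-module: the input-layer error $\Vert s^2(T)-s^2 \Vert \leq m_1^2 e^{-T}$ from Proposition 7, and the weight-matrix error inherited from iteration $m=1$. For the latter I would open with the bookkeeping identity $\Vert w^{\pm}_j(0) - \mathcal{W}^2_{\pm,j} \Vert_{m=2} \leq \Vert w^{\pm}_j(0) - \mathcal{W}^1_{\pm,j} \Vert^{m=1} + \Vert \mathcal{W}^2_{\pm,j} - \mathcal{W}^1_{\pm,j} - (w^{\pm}_j(0) - w^{\pm}_j(T)) \Vert$, which, combined with Theorem~1, is bounded by $\Vert w_j \Vert^1 + \{\tilde{\mathcal{R}}^{qe}\}^1$ for $j=1$ and the analogous $\tilde{\mathcal{S}}$-form for $j=2$. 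Crucially these weight-error terms are $T$-constant, so they will survive as order-$T^0$ contributions and may pick up positive powers of $T$ as they traverse downstream modules.

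Next I would propagate the errors through the first hidden-layer stack exactly along the chain $\mathcal{M}^f_{l_1}\to\mathcal{M}^f_{n^1_1}\to\mathcal{M}^f_{n^2_1}\to\mathcal{M}^f_{n^3_1}$ that was analysed in the first iteration. In $\mathcal{M}^f_{l_1}$ the closed-form solution of $\dot n^{\pm} = w^{\pm}_1 s_a - n^{\pm}$ gives a current error $\Vert n^{\pm}(T)-\bar n^{\pm}\Vert$ that decays like $e^{-T}$ and an accumulative part controlled by $\Vert w^{\pm}_1(0)\Vert\,\Vert s^2(T)-s^2\Vert + \Vert s_a\Vert\,\Vert w^{\pm}_1(0)-\mathcal{W}^2_{\pm,1}\Vert$; the first summand contributes to $\{A\}^2$ while the second produces a $T$-constant term proportional to $\{\tilde{\mathcal{R}}^{qe}\}^1 + \Vert w_1\Vert^1$. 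In the annihilation module $\mathcal{M}^f_{n^1_1}$ I would invoke Lemma~3's valid-$T$ range so that the attractive region is preserved, which keeps the order structure intact. The two sigmoid-style phases $\mathcal{M}^f_{n^2_1}$ and $\mathcal{M}^f_{n^3_1}$ are where the polynomial factors of $T$ appear: the Lipschitz/differential-mean-value estimate applied in Proposition~2 produces a multiplicative $T$ each time a $T$-constant parameter error flows through the exponent of a transient solution. A careful accounting shows that after the full cascade the weight-error contribution enters $\vert p^2_{il}(T)-\Upsilon^2_{il}\vert$ only with powers $T^0$ and $T^1$, giving the two-term sum $\sum_{r=1}^{2}\tilde{\mathcal{C}}_r^2[\,\cdot\,]T^{r-1}$ on top of $\{A\}^2$.

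For the second layer I would repeat the same workflow: the lwsBCRN $\mathcal{M}^f_{l_2}$ mixes the already-polluted hidden outputs with the iteration-2 weight error of $\mathcal{W}_{\pm,2}$, and the three sigBCRN phases $\mathcal{M}^f_{n^j_2}$ amplify those constant terms by at most one additional factor of $T$ per phase through the same Lipschitz estimate. This yields the stated polynomial profile: the $\mathcal{W}_1$-error has already collected up to $T^1$ before entering the output layer and can collect up to two more $T$ factors there, hence $T^{r-1}$ for $r=1,2,3$; the $\mathcal{W}_2$-error is injected only at the output lwsBCRN and can absorb at most one $T$ factor before reaching $y^2$, hence $T^{r-4}$ for $r=4,5$. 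The coefficients $\{\tilde{\mathcal{C}}^2_r\}$ and $\{\mathcal{D}^2_r\}$ are then harvested by taking, at each step, the maximum over the sample index $l$ and the sign labels, exactly as in Proposition~2 and Corollary~1. The main obstacle I anticipate is the bookkeeping on the polynomial degree: one must verify that at no point does a $T$-constant weight error get multiplied by $T$ more times than the cascade allows, and that the combination of an $e^{-T}$ current error with a $T^k$ prefactor can still be absorbed into $\{A\}^2$ or $\{E\}^2$ by Lemma~6; this requires tracking each Lipschitz step explicitly rather than citing the first-iteration formulas as a black box.
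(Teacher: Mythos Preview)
Your proposal is correct and follows essentially the same route as the paper: the paper also isolates the weight-error contribution as a separate ``tail item'' and threads it through the identical chain $\mathcal{M}^f_{l_1}\to\mathcal{M}^f_{n^j_1}\to\mathcal{M}^f_{l_2}\to\mathcal{M}^f_{n^j_2}$, picking up $T$-factors via the same Lipschitz/mean-value steps used in Proposition~\ref{prop-psp}, and arrives at exactly the polynomial profile you describe. One small correction to your degree count: the output sigmoid stack contributes only \emph{one} additional $T$-factor overall (the $\mathcal{M}^f_{n^2_2}$-type step multiplies the tail by $T$, while $\mathcal{M}^f_{n^3_2}$ merely forms a $\tfrac14(\cdot)+m_9m_8(\cdot)T$ combination without raising the maximal degree), which is why the $\mathcal{W}_1$-tail tops out at $T^2$---consistent with the range $r=1,2,3$ you state, but not with the phrase ``up to two more $T$ factors.''
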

\begin{proof}
Initial deviations originated from weight species $w^{\pm,1}(T)$ and net input species $n^{\pm,1}(3T)$ force to re-estimate $\Vert n^{\pm,2}(0) - w^{\pm,2}_1(0)s^2_a(T) \Vert$ and $\Vert w^{\pm,2}_{1}(0)s^2_a(T) - \mathcal{W}^1_{\pm,1}\Xi^2 \Vert$ in $\mathcal{M}^f_{l_1}$. Then, following a similar scaling method, there exists $\Theta^{\pm}_2$, s.t.,
\begin{equation*}
    \begin{split}
\Vert n^{\pm,2}(T) - \mathcal{N}^2_{\pm} \Vert_{m=2} \leq m^{\pm,2}_{4} e^{-T} + ms_{a}^2 \left[\{\tilde{\mathcal{R}}^{qe}_{\pm}\}^1 + \Vert W^{\pm}_{1} \Vert^1 \right],  
    \end{split}
\end{equation*}
where $m^{\pm,2}_{4} = \Theta^{\pm}_2 + \Vert \mathcal{N}^1_{\pm}-\mathcal{W}^1_{\pm,1} \Xi^2 \Vert + 2m^2_1\Vert \mathcal{W}^1_{\pm,1}\Vert$, $ms^2_{a} = 2(m^2_1 + \Vert \Xi^2 \Vert)$. Notably, $\Theta^{\pm}_2$ arising from $\vert n^{\pm,2}(0)-\mathcal{N}^1_{\pm} \vert$ can be concatenated into $e^{-T}$, and initial errors from weight species lead to appending a lower-order entry to the upper bound, referred to as \textit{tail item} (TI) consistently in the following. Vertical propagation from $\mathcal{M}^f_{l_1}$ causes TI of $\vert n^{\pm,2}_{il}(2T) - \vert \mathcal{N}^2_{il} \vert \vert$ in $\mathcal{M}^f_{n^1_1}$ to be $ms^2_a\left[\{\tilde{\mathcal{R}}^{qe}\}^1 + \Vert w_1 \Vert^1 \right]$. For $\{\mathcal{M}^f_{n^2_1}\}$, TI caused by vertical propagation from $n^{\pm,2}(2T)$ and horizontal accumulation on $p^{\pm,2}_{il}(0)$, separately, are
\begin{equation*}
    \begin{split}
\vert p^{s_1,2}_{il} (T) -\frac{1}{2} \vert_{tail} & = m^{s_1,2}_{5i} ms^2_a \left[\{\tilde{\mathcal{R}}^{qe}\}^1 + \Vert w_1 \Vert^1 \right] T, \\
\vert p^{s_2,2}_{il} (T) - 0 \vert_{tail} & = ( \{p^{s_2,1}_{il}\} + \tilde{\delta}^{s_2}_2 )e^{-T}, 
    \end{split}
\end{equation*}
where if $\mathcal{N}^2_{il}>0$, $s_1 = +, s_2=-$, otherwise $s_1 = -, s_2 =+$, and $p^{\pm,2}_{il}(0)$ also perturbs upwards the coefficients $m^{\pm,2}_{5i} = \frac{1}{2} + \tilde{\delta}_1$.
As for $\mathcal{M}^f_{n^3_1}$, the additional TI is derived as 
\begin{equation*}
    \begin{split}
\vert p^{s_1,2}_{il} (2T) - \Upsilon^2_{il} \vert_{tail} & = \frac{1}{4} (ms^2_a)\left[\{\tilde{\mathcal{R}}^{qe}\}^1 + \Vert w_{1} \Vert^1 \right] \\
& + m^{s_1,2}_{6i}m^{s_1,2}_{5i}ms^2_a \left[\{\tilde{\mathcal{R}}^{qe}\}^1 + \Vert w_{1} \Vert^1 \right] T, \\ 
\vert p^{s_2,2}_{il} (2T) - 0 \vert_{tail} & = (\{p^{s_2}_{il}\}^1_{\mathcal{O}_{13}} + \tilde{\delta}^{s_2}_2 )e^{-T}.
    \end{split}
\end{equation*}
Estimating $\vert p^2_{il}(T) - \Upsilon^2_{il} \vert_2$ following Proposition \ref{prop-psp}, the final TI in the first layer is derived from the vertical propagation as 
\begin{equation*}
\begin{split}
\vert p^2_{il}(T) - \Upsilon^2_{il}\vert_{tail} = 2 \vert p^{s_1,2}_{il} (2T) - \Upsilon^2_{il} \vert_{tail}
\end{split}
\end{equation*}
since horizontal accumulation on $p^2_{il}(0)$ alters the coefficients of $Te^{-T}$ and the order of entry $\{p^{s_2}_{il}\}_{\mathcal{O}^1_{13}} + \tilde{\delta}^{s_2}_2$ is not increased by operations. Then, $\tilde{\mathcal{C}}^2_1=\frac{1}{2}(ms^2_a)$, $\tilde{\mathcal{C}}^2_2=2m^2_{6}m^2_{5}ms^2_a$ with $m^2_{6}m^2_{5} = \mathop{\max}_{il}\{m^{\pm,2}_{5il}m^{\pm,2}_{6il}\}$.
Considering the module $\mathcal{M}^f_{l_2}$, $\exists ~ \Theta_3>0$ such that $\Vert \tilde{n}^{\pm,2}(T) - \tilde{\mathcal{N}}^{2}_{\pm} \Vert_{tail}$ becomes 
\begin{equation*}
 mp^2_a\left[ \{\tilde{\mathcal{S}}^{qe}_{\pm}\}^1 + \Vert w^{\pm}_{2} \Vert^1 \right] + m^{\pm,2}_7\Vert p^2(T) - \Upsilon^2 \Vert_{tail},    
\end{equation*}
where $mp^2_a = (\Theta_3 + \Vert \tilde{\Upsilon}^2 \Vert)$ and $m^{\pm,2}_7 = 2\Vert \mathcal{W}^2_{\pm ,2} \Vert$. In particular, in the first layer of BFCNN, the unique factor causing TI is the vertical propagation of $\Vert n^{\pm,2}(T) -\mathcal{N}^2_{\pm} \Vert_{m=2}$. Thus, a similar argument is applied in modules $\mathcal{M}^f_{n^j_2}$ to obtain   
\begin{equation*}
\begin{split}
\vert y^{s_1,2}_l(T) - y^{2}_{l} \vert_{tail} & = \frac{1}{4} \vert \tilde{n}^{\pm,2}_{l}(2T) - \vert \tilde{\mathcal{N}}^{2}_{l} \vert \vert_{tail} \\
& ~~~ + m^{s_1,2}_{9i}m^{s_1,2}_{8i} \vert \tilde{n}^{\pm,2}_{l}(2T) - \vert \tilde{\mathcal{N}}^{2}_{l} \vert \vert_{tail}T,\\
\vert y^{s_2,2}_l(T) - 0 \vert_{tail} & = (\{y^{s_2}_l\}^l_{\mathcal{O}_{21}} + \tilde{\delta}^{s_2}_4)e^{-T},
\end{split}   
\end{equation*}
where $m^{\pm,2}_{8i} = \frac{1}{2} + \tilde{\delta}_3$, and if $\tilde{\mathcal{N}}^2_{l}>0$, $s_1 = +, s_2=-$, otherwise $s_1 = -, s_2 =+$. Consequently, coefficients $\{\mathcal{D}^2_r\}^5_{r=1}$ is available for $\Vert y^2(T) - y^2 \Vert_{m=2}$ in \eqref{eq:prop8}. 
\end{proof}

Therefore, in accordance with Proposition \ref{prop:ass_2} and \ref{prop:feed_m}, the horizontal accumulation of errors on initial concentrations of species, excluding weight species, is incapable of introducing lower-order entries in the assignment and feedforward modules. Moving forward, as shown in Fig.\ref{fig:7}, vertical propagation from $y^2_l(T), p^2_{il}(T), s^{l,2}_3(T)$ and lateral accumulation on initial concentrations of other species have compound effects on pBCRN. Due to the clear-out module designed to prevent the occurrence of lower-order than $e^{-T}$ from previous iterations by decaying species or assigning fixed concentrations to species in advance, it can be easily confirmed that tail items in this part solely arise from vertical propagation. Thus, denoting $\Vert y^2(T)-y^2 \Vert_{tail}=\{\mathcal{D}_y\}^2$, $\Vert p^2(T) - \Upsilon^2 \Vert_{tail}=\{\mathcal{D}_p\}^2$, we derive TIs regarding $\{\tilde{y}^2_{l}(T), e^{\pm,2}_l(T), s^{l,2}_y(T)\}$ to be $\{\mathcal{D}_y\}^2$ and 
regarding $\{\tilde{p}^2_{il}(T), s^{l,2}_{p_i}(T)\}$ to be $\{\mathcal{D}_p\}^2$. Consequently, we provide the general error upper bound of updating weights in any $m$th iteration by recursion in the main theorem. 

\begin{theorem} \label{th:final}
For arbitrary iteration $m$, we can find the corresponding coefficient sets $\{\mathcal{D}^m_r\}^5_{r=1}$, $\{\tilde{\mathcal{C}}^m_r\}^3_{r=1}$, $\{\mathcal{B}^m_r\}^3_{r=1}$, $\{\mathcal{F}^m_r\}^3_{r=1}$ such that the 
deviation of updated weights in BFCNN from that in FCNN caused by our realization method is covered by the error upper bound
\begin{gather}
    \begin{split}
 \begin{pmatrix}
 \tilde{\mathcal{E}\mathcal{R}}(T) \\
    \tilde{\mathcal{E}\mathcal{S}}(T)
\end{pmatrix}_m
& \leq
\sum^{m-2}_{j=0} \{ 
\begin{pmatrix}
   \tilde{\mathcal{R}} \\
   \tilde{\mathcal{S}}
\end{pmatrix}_{m-j} \cdot \prod^m_{i=m-j+1} (\left[BFD\right]_i + \tilde{\mathcal{P}})\} \\
& + \prod^m_{r=2} (\left[BFD\right]_r +\tilde{\mathcal{P}})\cdot 
\begin{pmatrix}
    \tilde{\mathcal{R}} \\
    \tilde{\mathcal{S}}
\end{pmatrix}_1
    \end{split}
    \label{eq:final_estimation}
\end{gather}
with $\tilde{\mathcal{E}\mathcal{R}}(T)_m =\Vert w^{+,m}_1(T)-w^{-,m}_1(T) - \mathcal{W}^m_1 \Vert_{m}$, $\tilde{\mathcal{E}\mathcal{S}}(T)_m =\Vert w^{+,m}_2(T)-w^{-,m}_2(T) - \mathcal{W}^m_2 \Vert_{m}$, and 
\begin{gather*}
\begin{pmatrix}
    \tilde{\mathcal{R}} \\
    \tilde{\mathcal{S}}
\end{pmatrix}_m
=
\begin{pmatrix}
    \{\tilde{\mathcal{R}}^{qe}\}^m \\
    \{\tilde{\mathcal{S}}^{qe}\}^m
\end{pmatrix},~~~
\tilde{P} =
\begin{pmatrix}
 1&2\tilde{p}\\
 0&1+2\tilde{p}
\end{pmatrix},~
\\
\left[BFD\right]_m
=
2\tilde{p}\begin{pmatrix}
     \sum^3_{r=1}(\mathcal{B})^m_rT^{r-1} & \sum^5_{r=4}5(\mathcal{D})^m_rT^{r-4} \\
    \sum^3_{r=1}(\mathcal{F})^m_rT^{r-1} &    \sum^5_{r=4}3(\mathcal{D})^m_rT^{r-4}
 \end{pmatrix}.
\end{gather*}
\end{theorem}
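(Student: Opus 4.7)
The plan is to prove \eqref{eq:final_estimation} by induction on $m$, based on a one-step linear recurrence for the cumulative weight-error vector. Write $E_m := (\tilde{\mathcal{E}\mathcal{R}}(T),\tilde{\mathcal{E}\mathcal{S}}(T))_m^\top$. The target formula \eqref{eq:final_estimation} is exactly the closed form obtained by unrolling a recurrence
\[
E_m \leq ([BFD]_m + \tilde{\mathcal{P}})\,E_{m-1} + \begin{pmatrix}\tilde{\mathcal{R}}\\ \tilde{\mathcal{S}}\end{pmatrix}_m,
\]
since iterating this bound from the base case $m=1$ (which is furnished by \eqref{eq:estiW_1}--\eqref{eq:estiW_2}) produces precisely the finite sum plus the leading product in \eqref{eq:final_estimation}. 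The bulk of the work is thus to establish this one-step recurrence; the remainder is a routine unrolling.

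To establish the inductive step, I would trace the error propagation through one full iteration, module by module, directly generalizing Propositions \ref{prop:ass_2} and \ref{prop:feed_m} to arbitrary $m$. The assignment module (Proposition \ref{prop:ass_2}) shows that horizontal accumulation from the previous iteration perturbs only the initial conditions of $s,c,\tilde{c}$ without changing the $e^{-T}$ order; the resulting block is $\tilde{\mathcal{P}}$, whose upper-triangular shape and $2\tilde{p}$ entries reflect that (i) $w_1$ errors carry over directly along the main diagonal, (ii) $w_2$ errors feed into the $w_1$-update through the backprop gradient chain weighted by mini-batch size $\tilde{p}$ (and the dual-rail doubling), and (iii) $w_2$ errors propagate with a self-feedback factor $1+2\tilde{p}$ via the hidden-layer output dependence of the $w_2$-gradient.

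The feedforward and pBCRN modules, generalizing Proposition \ref{prop:feed_m} by induction, introduce polynomial-in-$T$ coefficients of the propagated weight-error increments, with the polynomial structure captured by $\{\tilde{\mathcal{C}}^m_r\}$, $\{\mathcal{D}^m_r\}$, $\{\mathcal{B}^m_r\}$, $\{\mathcal{F}^m_r\}$; combined with the multiplication-error bound of Lemma \ref{lem_multi} applied inside $\mathcal{M}^{le}_1$ and the affine recursion \eqref{eq:w,deltaw} inside $\mathcal{M}^{le}_2$, this gives the $[BFD]_m$ block. The $2\tilde{p}$ factor arises from the mini-batch summation over $l$ paired with the dual-rail positive/negative summation in $par^\pm$; the distinct coefficients $5$ and $3$ in the top-right and bottom-right blocks track how $w_2$ errors enter the $\mathcal{M}^{le}_1$ product for $\Delta w_1$ (five upstream factors affected) versus $\Delta w_2$ (three upstream factors affected). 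Adding the newly generated current error of the $m$th iteration, which is precisely $(\tilde{\mathcal{R}},\tilde{\mathcal{S}})_m^\top$ by the same argument as in Theorem 1 applied with the iteration-$m$ coefficient updates, yields the claimed recurrence.

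The hardest part will be the bookkeeping that certifies the block form of $[BFD]_m + \tilde{\mathcal{P}}$: specifically, one must verify that the polynomial degrees in $T$ produced by successive sigBCRN and multiplication-error propagations never exceed the $T^0,T^1,T^2$ powers listed in the matrix entries, and that every cross-propagation path between $w_1$- and $w_2$-errors lands in the correct off-diagonal block with its coefficient correctly accumulated into $\mathcal{B}^m_r$ or $\mathcal{D}^m_r$. Once this structural claim is verified, the closed-form expression \eqref{eq:final_estimation} follows from a standard discrete-time linear-recurrence unrolling, giving the empty product ($=I$) for the $j=0$ summand and the full product $\prod_{r=2}^m([BFD]_r+\tilde{\mathcal{P}})$ acting on the $m=1$ base case.
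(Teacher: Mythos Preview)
Your proposal is correct and follows essentially the same route as the paper: both establish the one-step affine recurrence $E_m \leq ([BFD]_m + \tilde{\mathcal{P}})E_{m-1} + (\tilde{\mathcal{R}},\tilde{\mathcal{S}})_m^\top$ (the paper actually writes it for the dominating quantity $\tilde{\mathcal{E}}^j(T)_m = \Vert w^{+,m}_j-\mathcal{W}^m_{+,j}\Vert+\Vert w^{-,m}_j-\mathcal{W}^m_{-,j}\Vert \geq \tilde{\mathcal{E}\mathcal{R}},\tilde{\mathcal{E}\mathcal{S}}$) and then unroll it. One small correction on attribution: $\tilde{\mathcal{P}}$ does not come out of the assignment module; Proposition~\ref{prop:ass_2} only certifies that assignment errors stay at order $e^{-T}$ and hence fold into $(\tilde{\mathcal{R}},\tilde{\mathcal{S}})_m$. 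The matrix $\tilde{\mathcal{P}}$ is produced in the learning module---the diagonal $1$'s are the direct carryover $\Vert w^{\pm,m}_j(0)-\mathcal{W}^{m-1}_{\pm,j}\Vert$ in \eqref{eq:w,deltaw}, and the $2\tilde{p}$ entries arise because $w^\pm_2$ itself is a factor in the gradient monomials $\bar{q}^l_{\tilde s\pm\pm},\bar{q}^l_{v\pm}$ inside $\mathcal{M}^{le}_1$, summed over $l$ and over the dual rails---so your description of what $\tilde{\mathcal{P}}$ encodes is right, only its provenance is mis-labelled.
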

\begin{proof}
Firstly, for $m=2$, the current errors in $\mathcal{M}^{le}_1$ increase the exponential coefficients $\mathcal{L}^8_{\pm,\tilde{s}}, \mathcal{L}^7_{\pm,v}$ for $\tilde{s} \in I_s, v \in I_v$ in \eqref{eq:Prop51}, \eqref{eq:prop52} by the form $\epsilon(T^ae^{-bT})$. Since the deviation term is bounded, the current errors consistently maintain their order. Then, Lemma \ref{lem_multi} shows that tail items $\{\mathcal{D}_y\}^2, \{\mathcal{D}_p\}^2$ should be added  to form new accumulative errors. Hence, 
\begin{equation*}
    \begin{split}
\Vert par^{\pm,2}_1(T)-par^2_{\pm,1} \Vert_{m=2} & \leq \{\mathcal{L}^{qe,1}_{\pm}\}^2 + \tilde{p}\Vert w_2 \Vert^2 \\
& ~~~ + 5\tilde{p} \{\mathcal{D}_y\}^2 + 4\tilde{p} \{\mathcal{D}_p\}^2,\\
\Vert par^{\pm,2}_2(T)-par^2_{\pm,2} \Vert_{m=2} & \leq \{\mathcal{L}^{qe,2}_{\pm}\}^2+\tilde{p}\Vert w_2 \Vert^2 \\
& ~~~+ 3\tilde{p} \{\mathcal{D}_y\}^2 + \tilde{p} \{\mathcal{D}_p\}^2.
    \end{split}
\end{equation*}
Note that coefficients $\{\mathcal{L}^{3,1}, \mathcal{L}^{3,2}_{\pm}\}^2$ are perturbed upwards than that in $m=1$ by indirect lateral accumulation on pBCRN. By combining errors in $\mathcal{M}^{le}_2$, we obtain the final estimation
\begin{equation*}
    \begin{split}
\Vert w^{\pm,2}_{1}(T) - \mathcal{W}^2_{\pm,1} \Vert_{m=2} & < \Vert w^{\pm,2}_{1}(0) - \mathcal{W}^1_{\pm,1} \Vert_{m=2} + \{\tilde{\mathcal{R}}^{qe}_{\pm}\}^2 \\
& + \tilde{p}\Vert w_2 \Vert^2 + 5\tilde{p} \{\mathcal{D}_y\}^2 + 4\tilde{p} \{\mathcal{D}_p\}^2, \\
\Vert  w^{\pm,2}_{2}(T) - \mathcal{W}^2_{\pm,2} \Vert_{m=2} & < \Vert w^{\pm,2}_2(0) - \mathcal{W}^1_{\pm,2} \Vert _{m=2} +\{\tilde{\mathcal{S}}^{qe}_{\pm}\}^2 \\
& + \tilde{p}\Vert w_2 \Vert^2 + 3\tilde{p}\{\mathcal{D}_y\}^2 +  \tilde{p}\{\mathcal{D}_p\}^2.
    \end{split}
\end{equation*}
Next, we consider further iterations by establishing the recursion formula. Here, $\left[\{\tilde{\mathcal{R}}^{qe}\}^{1} + \Vert w_1 \Vert^1 \right]$, $\left[\{\mathcal{S}^{qe}\}^{1} + \Vert w_2 \Vert^1 \right]$, existing $\{\mathcal{D}_y\}^2$, $\{\mathcal{D}_p\}^2$, can represent the upper bounds of $\Vert w^{+}_{1}(T)- w^{-}_{1}(T) - \mathcal{W}^1_{1} \Vert_{m=1}$, $\Vert w^{+}_{2}(T)- w^{-}_{2}(T)-\mathcal{W}^1_{2} \Vert_{m=1}$, respectively. Indeed, the cumbersome tail items display how weight errors in the last iteration propagate in the current iteration. Denoting $\tilde{\mathcal{E}}^j(T)_m = \Vert w^{+,m}_{j}(T) - \mathcal{W}^m_{\pm,j} 
 \Vert + \Vert w^{-,m}_{j}(T) - \mathcal{W}^m_{\pm,j} \Vert$ with $j=\{1,2\}$, then $\{\mathcal{D}_y\}^m, \{\mathcal{D}_p\}^m$ can be expressed by substituting corresponding elements. There exist related coefficients to have the recursion formula for the $m$th tail item and $\tilde{\mathcal{E}}^j(T)_{m-1}$ by $\mathcal{A}^m_1 =  5 \{\mathcal{D}_y\}^m + 4 \{\mathcal{D}_p\}^m$, $\mathcal{A}^m_2 =  3 \{\mathcal{D}_y\}^m + \{\mathcal{D}_p\}^m$, i.e.,
\begin{equation*}
    \begin{split}
\mathcal{A}^m_1 & = \sum^3_{m=1} \mathcal{B}^m_r \tilde{\mathcal{E}}^1(T)_{m-1} T^{r-1} + \sum^5_{r=4} 5 \mathcal{D}^m_r  \tilde{\mathcal{E}}^2(T)_{m-1} T^{r-4}, \\
\mathcal{A}^m_2 & = \sum^3_{r=1} \mathcal{F}^m_r \tilde{\mathcal{E}}^1(T)_{m-1} T^{r-1} + \sum^5_{r=4} 3 \mathcal{D}^m_r  \tilde{\mathcal{E}}^2(T)_{m-1} T^{r-4}, 
    \end{split}
\end{equation*}
where $\{\mathcal{D}_y\}^1 = \{\mathcal{D}_p\}^1 =0$ and
\begin{equation*}
\mathcal{B}^m_r =\left\{
\begin{aligned}
5 \mathcal{D}^m_r + 4\mathcal{C}^m_r, &r=1,2 \\
5 \mathcal{D}^m_r~~~~~, &r=3~~
\end{aligned},
\right.
~
\mathcal{F}^m_r =\left\{
\begin{aligned}
3 \mathcal{D}^m_r + \mathcal{C}^m_r,& r=1,2 \\
3 \mathcal{D}^m_r ~~~~~,& r=3
\end{aligned}.
\right.
\end{equation*}
Then, denoting errors of real updated weights in the $m$th iteration as $\tilde{\mathcal{E}\mathcal{R}}(T)_m$, $\tilde{\mathcal{E}\mathcal{S}}(T)_m$, we have a recurrence formula of their upper bounds
\begin{gather*}
\begin{split}
& \begin{pmatrix}
   \tilde{\mathcal{E}}^1(T) \\
   \tilde{\mathcal{E}}^2(T)
\end{pmatrix}_m
=
\begin{pmatrix}
    \{\tilde{\mathcal{R}}^{qe}\}^m\\
    \{\tilde{\mathcal{S}}^{qe}\}^m
\end{pmatrix}
+
\begin{pmatrix}
    \Vert w_1 \Vert^m+2\tilde{p}\Vert w_2 \Vert^m \\
    (1+2\tilde{p})\Vert w_2 \Vert^m 
\end{pmatrix} \\
& +
2\tilde{p} 
\begin{pmatrix}
    \sum^3_{r=1}(\mathcal{B})^m_rT^{i-1} & \sum^5_{r=4}5(\mathcal{D})^m_rT^{r-4} \\
    \sum^3_{r=1}(\mathcal{F})^m_rT^{r-1} &
    \sum^5_{r=4}3(\mathcal{D})^m_rT^{r-4}
\end{pmatrix}
\begin{pmatrix}
   \tilde{\mathcal{E}}^1(T) \\
   \tilde{\mathcal{E}}^2(T)
\end{pmatrix}_{m-1}
\end{split}
\end{gather*}
since $\tilde{\mathcal{E}\mathcal{R}}(T)_m \leq \tilde{\mathcal{E}}^1(T)_m$, $\tilde{\mathcal{E}\mathcal{S}}(T)_m \leq \tilde{\mathcal{E}}^2(T)_m$.
Consequently, we can derive the general upper bound by utilizing 
$\Vert w_1 \Vert^{m+1} = \tilde{\mathcal{E}}^1(T)_m$, $\Vert w_2 \Vert^{m+1} = \tilde{\mathcal{E}}^2(T)_m$. 
\end{proof}

The upper bound of the real updated weights in \eqref{eq:final_estimation} increases over the iteration numbers due to both the accumulative components and the increasingly large coefficients. However, this can be mitigated by extending the phase length $T$. Note that growing the lengthening $T$ results in a longer runtime, presenting a trade-off between accuracy and efficiency when running BFCNN. In our design, the upper bounds for calculations comprise entries of the form $T^a e^{-bT}$ for $a>0, b>0$. This implies that BFCNN exhibits the exponential-order convergence of errors in the finite $m$th iteration, as indicated by lemma \ref{eq:lem_for_ expone}, thereby allowing for enhanced precision at a relatively low cost of efficiency. Furthermore, due to the concrete description of powers outlined in \eqref{eq:estiW_1} and \eqref{eq:estiW_2}, augmenting the magnitude of the parameters in the FCNN, such as the training sample data, the initial weight values, within a valid range serves to amplify $\mathcal{N}^m, \tilde{\mathcal{N}}^m$. It consequently increases $\alpha^m, \alpha^m_3, n^m, \tilde{n}^m$ offering an alternative approach to decrease errors. Finally, for a more precise realization, the reaction rates can be accelerated by employing increasing the rate constant $k$ or the amplitude of the oscillator, without causing scale separations between modules.

\section{Application to binary classification problems} 
\label{sec:numeri}
To visualize the overall trend of the general upper bound of weight update errors (referred to as ``realization errors'' henceforth) concerning phase lengths and iteration numbers, we fixed the amplitude of the oscillator as one and selected a suitable group of phase lengths by adjusting the rate constant of oscillatory reactions for executing BFCNN multiple times. Then, we recorded the upper bound of realization errors $c\Vert w^{+,m}(T)- \mathcal{W}^m_{+} \Vert + c\Vert w^{-,m}(T)-\mathcal{W}^m_{-} \Vert $ with $c>0$ on logical classification problems ``OR'' and ``XOR'' at each iteration, where $\mathcal{W}^m_{\pm}$ are obtained in the reference module, FCNN, with the same initial positive/negative weights, training data and dual-rail computations implemented using MATLAB. 



\begin{figure*}
\centering  
\includegraphics[width=\textwidth]{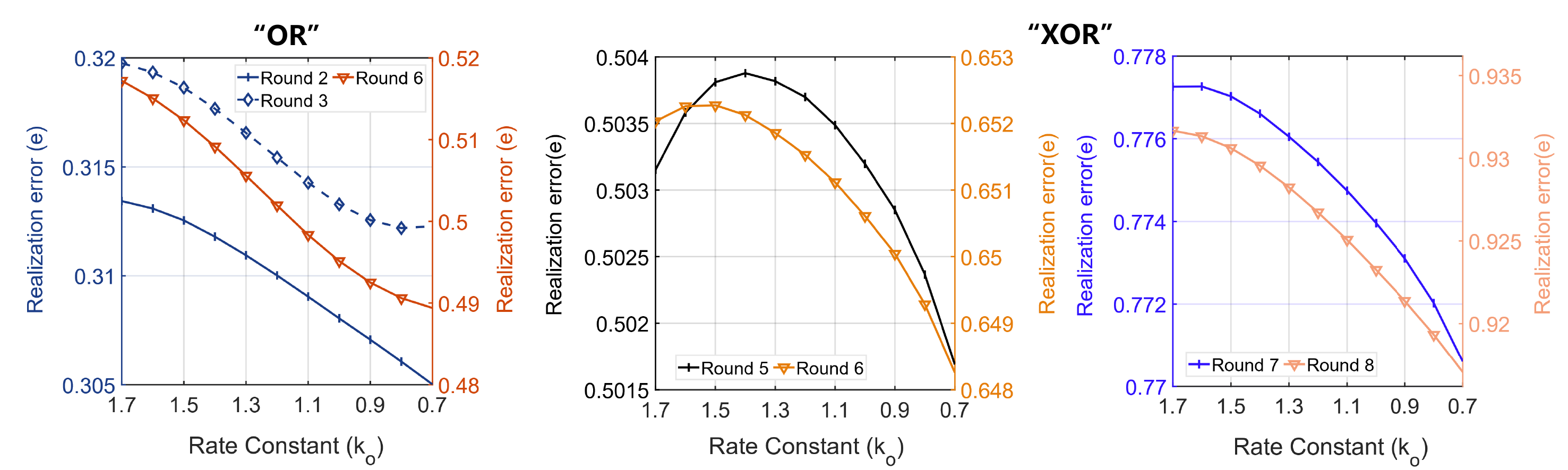}
\caption{Realization error change versus the finite phase length from $T= 21.481 (k_o = 1.7)$ units to $T= 49.289 (k_o=0.7)$ units of the chemical oscillator. The left picture is for the ``OR'' case and the right two are for the ``XOR'' case. }
     \label{fig.10}
\end{figure*}

Realization errors change versus phase length $T$ is divided into two cases, as shown in Fig.\ref{fig.10}. Since the training was finished after the fifth training round and no weight update occurred from the third to the fourth round when training ``OR'', we plotted realization error change curves in the second, third, and sixth rounds, respectively. This illustrates the decreasing nature of $\tilde{\mathcal{E}
\mathcal{R}}(T), \tilde{\mathcal{E}
\mathcal{S}}(T)$ with increasing $T$. For the ``XOR'' case, we selected four representative training round numbers to show the changing characteristics. Although the curves in the fifth and sixth rounds exhibit overshooting and not monotonically decreasing, the general tendency of realization error is to be reduced without contradicting the upper bound estimation for realization errors. Furthermore, the error cumulative phenomenon regarding iteration numbers can be observed in Fig.\ref{fig.10}, where the y-axis ranges of error curves with larger round numbers always lie above those of smaller round numbers. Finally, more concrete demonstrations are represented in Fig.\ref{fig.11} that are generated by the error data from the first round to the convergence round under certain finite phase lengths, confirming cumulative incremental trends.

\begin{figure*}
\centering  
\includegraphics[width=0.95\textwidth]{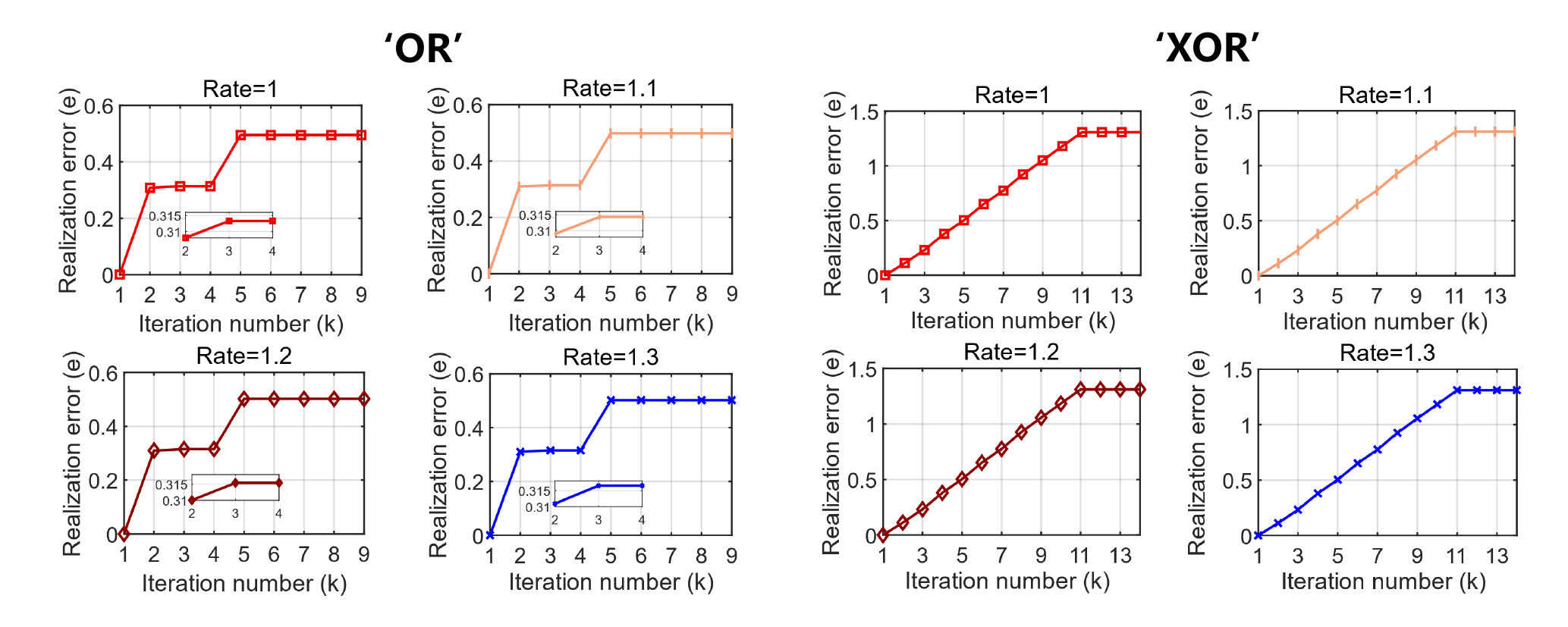}
\caption{Realization error change versus the iteration numbers from the first round to the convergence round. The left picture is for the ``OR'' case and the right one is for the ``XOR'' case. }
     \label{fig.11}
\end{figure*}

\section{Conclusion}
\label{sec:conclusion}
Part \Rmnum{2} has constructed a comprehensive framework for analyzing errors of the biochemical neural network, BFCNN, introduced in Part \Rmnum{1}, arising from representing computing results utilizing the finite approximation of LSSs, along with their vertical propagation and horizontal accumulation. In particular, this approach derives the general error upper bound of training weights in each iteration by examining errors in the first iteration to clarify the vertical propagation pathway and combining the influence from previous iterations to identify the horizontal propagation pathway. Our framework addresses a crucial gap in considering the impact of the occurrence and propagation of LSSs approximation errors on the performance of biological circuits designed by BCRN realization, which has the potential to offer effective ways to reduce errors.

A promising avenue for further research may concern enhancing the robustness of computational modules in such biological algorithm systems by designing controllers to conquer disturbances from accumulative errors, ensuring that equilibrium is maintained at a constant level. Additionally, inaccuracies caused by non-zero concentrations in the ``turn off'' phase of oscillators should be appended to establish the new error format in the future.

\section*{References}

\bibliographystyle{ieeetr}
\bibliography{IEEEREF}  

\end{document}